\documentclass[11pt,a4 paper,twoside, reqno]{amsart}
\linespread{1.03}
\usepackage[utf8]{inputenc}
\usepackage[OT1]{fontenc}
\usepackage[english]{babel}
\usepackage{dsfont}
\usepackage{amsfonts}
\usepackage{amsmath}
\usepackage{bbm}
\usepackage{wasysym}
\usepackage{amsthm}
\usepackage{amssymb}
\usepackage{float}
\usepackage{textcomp}
\usepackage{xcolor}
\usepackage{graphicx}
\usepackage{fancybox}
\usepackage{fancyhdr}
\usepackage{mathrsfs}
\usepackage{tikz}
\usetikzlibrary{arrows}
\usetikzlibrary{calc}
\usepackage{centernot}
\usepackage{listings}
\usepackage{faktor}
\usepackage{bm}
\usepackage{setspace}
\usepackage{hyperref}
\usepackage{newlfont}
\usepackage{geometry}
\usepackage{ccaption}
\usepackage{tipa}
\usepackage{units}
\usepackage[autostyle,italian=guillemets]{csquotes}
\usepackage{comment}

\usepackage{guit}
\usepackage{tikz-cd}
\usepackage{enumerate}

\everymath{\displaystyle}

\theoremstyle{definition}
\newtheorem{Def}{Definition}[section]

\newtheorem{es}[Def]{Example}

\newtheorem*{def:flat}{Definition~\ref{flat}}
\theoremstyle{remark}
\newtheorem{obs}[Def]{Remark}
\theoremstyle{plain}
\newtheorem{prop}[Def]{Proposition}

\newtheorem{lema}[Def]{Lemma}

\newtheorem{cor}[Def]{Corollary}

\newtheorem{teo}[Def]{Theorem}

\newtheorem*{teo:main}{Theorem~\ref{main}}

\newcommand{\bo}{\mathbf}
\newcommand{\A}{{\mathcal A}}
\newcommand{\B}{{\mathcal B}}
\newcommand{\C}{{\mathcal C}}
\newcommand{\D}{{\mathcal D}}
\newcommand{\E}{{\mathcal E}}
\newcommand{\F}{{\mathcal F}}
\newcommand{\G}{{\mathcal G}}
\renewcommand{\H}{{\mathcal H}}

\newcommand{\K}{{\mathcal K}}
\renewcommand{\L}{{\mathcal L}}
\newcommand{\M}{{\mathcal M}}

\renewcommand{\P}{{\mathcal P}}

\renewcommand{\S}{{\mathcal S}}
\newcommand{\T}{{\mathcal T}}
\newcommand{\U}{{\mathcal U}}
\newcommand{\V}{{\mathcal V}}

\newcommand{\mt}{\mathfrak}
\newcommand{\tx}{\textnormal}

\newcommand{\op}{^\textnormal{op}}

\newcommand{\colim}{\operatornamewithlimits{colim}}

\geometry{a4paper,top=3cm,bottom=3cm,left=3cm,right=3cm,%
	heightrounded,bindingoffset=0mm}

\makeatletter
\newcommand{\changeoperator}[1]{%
	\csletcs{#1@saved}{#1@}%
	\csdef{#1@}{\changed@operator{#1}}%
}
\newcommand{\changed@operator}[1]{%
	\mathop{%
		\mathchoice{\textstyle\csuse{#1@saved}}
		{\csuse{#1@saved}}
		{\csuse{#1@saved}}
		{\csuse{#1@saved}}%
	}%
}
\makeatother

\makeatletter
\def\@tocline#1#2#3#4#5#6#7{\relax
	\ifnum #1>\c@tocdepth 
	\else
	\par \addpenalty\@secpenalty\addvspace{#2}%
	\begingroup \hyphenpenalty\@M
	\@ifempty{#4}{%
		\@tempdima\csname r@tocindent\number#1\endcsname\relax
	}{%
		\@tempdima#4\relax
	}%
	\parindent\z@ \leftskip#3\relax \advance\leftskip\@tempdima\relax
	\rightskip\@pnumwidth plus4em \parfillskip-\@pnumwidth
	#5\leavevmode\hskip-\@tempdima
	\ifcase #1
	\or\or \hskip 1em \or \hskip 2em \else \hskip 3em \fi%
	#6\nobreak\relax
	\hfill\hbox to\@pnumwidth{\@tocpagenum{#7}}\par
	\nobreak
	\endgroup
	\fi}
\makeatother

\changeoperator{sum}
\changeoperator{prod}
\changeoperator{coprod}

\title{Flatness, weakly lex colimits, and free exact completions}
\author{Giacomo Tendas}
\address{Department of Mathematics and Statistics, Masaryk University, Faculty of Sciences, Kotl\'{a}\v{r}sk\'{a} 2, 611 37 Brno, Czech Republic}
\email{tendasg@math.muni.cz}
\date{\today}
\thanks{The author acknowledges with gratitude the useful comments and suggestions of the anonymous referee, as well as the support of the Grant agency of the Czech Republic under the grant 22-02964S}

\begin{document}
	
\begin{abstract}
	We capture in the context of {\em lex colimits}, introduced by Garner and Lack, the universal property of the free regular and Barr-exact completions of a weakly lex category. This is done by introducing a notion of {\em flatness} for functors $F\colon\C\to\E$ with lex codomain, and using this to describe the universal property of free $\Phi$-exact completions in the absence of finite limits, for any given class $\Phi$ of lex weights. In particular, we shall give necessary and sufficient conditions for the existence of free lextensive and free pretopos completions in the non-lex world, and prove that the ultraproducts, in the categories of models of such completions, satisfy an universal property.
\end{abstract}	
	
\maketitle

\setcounter{tocdepth}{1}
\tableofcontents

\section{Introduction}

Regular and Barr-exact categories were first considered in \cite{barr1971exact} as the ordinary counterpart of abelian categories, and have found many applications in different areas of category theory, logic, and algebra. In particular, free regular and Barr-exact completions of lex categories (that is, categories with finite limits) have always played a key role \cite{CM82:articolo,Lac99:articolo,Ros99,van2005inductive}. Given a lex category $\C$, its free Barr-exact completion is determined by a Barr-exact category $\C_\tx{ex}$ together with an embedding $K\colon\C\hookrightarrow\C_\tx{ex}$ for which left Kan extending along $K$ induces an equivalence for any Barr-exact category $\E$
$$ \tx{Lex}(\C,\E)\simeq\tx{Reg}(\C_\tx{ex},\E)  $$
between finite-limit preserving (lex) functors $\C\to\E$ and regular functors $\C_\tx{ex}\to \E$; the latter being those lex functors which also preserves regular epimorphisms (or equivalently, coequalizers of kernel pairs). It was soon realized that such free completions could be considered even when the category $\mathcal C$ is not finitely complete; in that case, however, one needs to replace the notion of lex functor with some other concept. 

In fact, Carboni--Vitale~\cite{CV98:articolo} and Hu~\cite{Hu96:articolo} showed, independently, that free Barr-exact completions exist for any {\em weakly lex} category $\C$. A category is weakly lex if it has weak finite limits; a weak limit of a diagram $H\colon\D\to\C$ is determined by an object $\tx{wlim} H\in \C$ together with a cone $\delta\colon \Delta (\tx{wlim}H)\to H$ such that each cone over $H$ factors, not necessarily uniquely, through $\delta$. Since the factorization is not unique, weak limits are not generally identified up to isomorphism. Weakly lex categories, and their exact completions, turned out to be important in various fields, including that of accessible categories~\cite{Hu1992:book,Ten22:duality}, categorical logic and type theory~\cite{emmenegger2020exact, maietti2013elementary,birkedal1998type}, homological algebra~\cite{grandis2000weak}, triangulated categories~\cite{rosicky2001exact}, as well as categories of topological spaces~\cite{carboni2000locally}.

In this context, lex functors in the universal property are replaced by {\em left covering functors}; that is, those $F\colon\C\to \E$ such that for any weak limit $\tx{wlim}H$ of a finite diagram $H\colon\D\to\C$, the comparison map
$$ F(\tx{wlim}H)\twoheadrightarrow \lim FH $$
in $\E$ is a regular epimorphism. When $\C$ is actually lex, $F\colon\C\to \E$ is left covering if and only if it is lex \cite[Proposition~20]{CV98:articolo}; while, if $\C$ is just weakly lex, but $\E=\bo{Set}$, then $F\colon\C\to \bo{Set}$ is left covering if and only if it is flat (that is, its category of elements is filtered) \cite[Corollary~23]{CV98:articolo}. For general $\C$ and $\E$ we do not have a description in terms of preservation of actual limits or colimits. Moreover, the notion of left covering functor is the right choice only when considering free regular and Barr-exact completions; it is in fact unclear what concept should be used when considering, for instance, free lextensive or free pretopos completions of non-lex categories.

The main aim of this paper is to address this problem with the introduction of a notion of {\em flatness} for functors $F\colon\C\to\E$ from any small category $\C$ into any lex category $\E$. We do that in the framework of {\em lex colimits} introduced by Garner and Lack \cite{GL12:articolo}, which captures not only the notions of regular and Barr-exact categories, but many others including (infinitary) lextensive, coherent, and adhesive categories as well as pretopoi. 

Starting from a class of lex weights $\Phi$, which represents the type of colimits involved in the definition, Garner and Lack introduce a notion of $\Phi$-exact category and provide a formal way to present the {\em free $\Phi$-exact completion} $\Phi_l\C$ of a lex category $\C$. The category $\Phi_l\C$ can be obtained as an iterated closure of the representables in $[\C\op,\bo{Set}]$ under finite limits and, what they name, {\em $\Phi$-lex colimits}. We note now that such construction makes sense also when $\C$ is not lex. 

The universal property of the free $\Phi$-exact completions in this setting asserts that for any lex category $\C$ the category $\Phi_l\C$ is $\Phi$-exact and left Kan extending along the inclusion $K\colon\C\hookrightarrow\Phi_l\C$ induces an equivalence
$$ \tx{Lex}(\C,\E)\simeq\Phi\tx{-Ex}(\Phi_l\C,\E) $$
for any $\Phi$-exact category $\E$. The first step towards a generalization of free $\Phi$-exact completions for a non-lex $\C$, is then to understand how to replace lex functors in the universal property above. When $\E=\bo{Set}$, a functor $F\colon\C\to \bo{Set}$, for a lex $\C$, is lex if and only if it is {\em flat}. As mentioned above, also left covering functors, from a weakly lex $\C$ into $\bo{Set}$, are the same as flat functors. Therefore, the general notion we are looking for is something that, when restricted to $\bo{Set}$-valued presheaves, gives back flatness.

The concept of flatness originates in the theory of modules with the work of Serre~\cite{Serre1956}, and has since become an important tool first in the theory of additive categories~\cite{ObRo70:articolo}, and then in many other areas of category theory \cite{diaconescu1975change,BS1983,AR94:libro,Ten2022continuity}. There are several equivalent ways to express flatness. Traditionally one says that $F\colon\C\to \bo{Set}$ is flat if and only if its category of elements $\tx{El}(F)$ ---based on $\C\op$--- is {\em filtered}. This is well known to be equivalent to requiring that the left Kan extension $$\tx{Lan}_YF\colon[\C\op,\bo{Set}]\longrightarrow \bo{Set},$$ of $F$ along the Yoneda embedding, preserves finite limits of representable. Using this fact and the pointwise definition of Kan extension, we can generalize the concept to functors with codomain any lex category $\E$ in place of $\bo{Set}$ as follows.

\begin{def:flat}[$\V=\bo{Set}$]
	We say that a functor $F\colon\C\to\E$, into a lex category $\E$, is {\em flat} if and only if for any finite diagram $H\colon\D\to\C$, the colimit 
	$$ \colim \left(\text{\Large $\nicefrac{\C}{H}$}\xrightarrow{\ \pi\ }\C\xrightarrow{\ F\ }\E\right) $$
	exists in $\E$ and is isomorphic, through the comparison map, to $\lim FH$.
\end{def:flat} 

Here, {\Large $\nicefrac{\C}{H}$} is the category of cones $\Delta C\to H$ over $H$; this can be identified with the category of elements of the functor $\lim YH\colon\C\op\to\bo{Set}$. It follows that, when $\E=\bo{Set}$, the colimit above defines the value of $\tx{Lan}_YF$ at $\lim YH$, so that the isomorphism requested in the definition above is simply asking that 
$$ \tx{Lan}_YF(\lim YH)\cong \lim FH$$ 
and we recover the usual notion of flatness. Moreover, we shall see that a functor with lex domain is flat if and only if it is lex (Proposition~\ref{flatchar}).

Now it is possible to define free $\Phi$-exact completions of non-lex categories: we say that $K\colon\C\hookrightarrow \D$ exhibits $\D$ as the {\em free $\Phi$-exact completion of $\C$} if left Kan extending along $K$ induces an equivalence
$$ \tx{Flat}(\C,\E)\simeq \Phi\tx{-Ex}(\D,\E)$$
for any $\Phi$-exact category $\E$. Such completions need not exist in general. For instance, when $\Phi=\emptyset$ the free $\emptyset$-exact completion of $\C$ exists if and only if $\C$ is lex (Remark~\ref{notexists}). Our main theorem below will provide necessary and sufficient conditions for the free $\Phi$-exact completion to exist:

\begin{teo:main}[$\V=\bo{Set}$]
	The following are equivalent for a small category $\C$:\begin{enumerate}\setlength\itemsep{0.25em}
		\item the free $\Phi$-exact completion of $\C$ exists;
		\item $K\colon\C\hookrightarrow\Phi_l\C$ exhibits $\Phi_l\C$ as the free $\Phi$-exact completion of $\C$;
		\item $\Phi^\diamond[\C]$ has finite limits of diagrams landing in $\C$.
	\end{enumerate}
\end{teo:main}

By (2), it follows that the free $\Phi$-exact completion of $\C$, when it exists, coincides with the category $\Phi_l\C$ introduced before. The category $\Phi^\diamond[\C]$ in $(3)$ will be defined in Section~\ref{main-sect} as a particular full subcategory of $[\C\op,\bo{Set}]$ obtained by taking {\em wlex colimits}, a generalization of lex colimits to the flat setting. Finite limits in $\Phi^\diamond[\C]$ should be understood as {\em virtual finite limits} in $\C$ in the sense of \cite{LT22:limits}. 

When considering our motivating example given by the class of lex weights $\Phi_\tx{ex}$ for Barr-exact categories (or $\Phi_\tx{reg}$ for regular categories), the theorem above implies that the free Barr-exact completion of $\C$ exists if and only if $\C$ has weak finite limits; and that a functor $F\colon\C\to\E$, from a weakly lex $\C$ to a Barr-exact category $\E$, is flat if and only if it is left covering (see Section~\ref{weaklylex}). Thus we recover the results of \cite{CV98:articolo,Hu96:articolo} as part of the framework of lex colimits. 

\vspace{5pt}
\noindent {\bf Outline.} The main results of this paper will be carried out in the context of categories enriched over a locally finitely presentable and symmetric monoidal closed category.

In Section~\ref{flatness} we introduce the notion of flat $\V$-functor and prove some basic properties which will be used in Section~\ref{main-sect}, where we introduce free $\Phi$-exact completions in the flat world and prove the existence theorem (Theorem~\ref{main}). Examples are discussed in Section~\ref{examples}; beside free regular and Barr-exact completions of small weakly lex categories, we describe free infinitary lextensive completions of small categories with finite multilimits, and free pretopos completions of small categories with finite fc-limits \cite{beke2005flatness}.

In Section~\ref{universalultra} we prove that the ultraproducts involved in the context of free pretopos and lextensive completions satisfy an universal property, despite not being the usual categorical ultraproducts computed in the presence of products and filtered colimits. Finally, in Appendix~\ref{fc-orth-inj} we give a description of the categories of models of lextensive categories and pretopoi in terms of, respectively, finite-cone orthogonality and finite-cone injectivity conditions.

\section{Flatness}\label{flatness}

In this section we generalize the notion of flatness from $\V$-functors taking value in $\V$ to $\V$-functors $F\colon\C\to \E$ taking value in a (possibly large and) lex $\V$-category $\E$. For that, we fix a symmetric monoidal closed category $\V=(\V_0,\otimes,I)$ as our base of enrichment and follow \cite{Kel82:articolo} for standard notations, particularly involving weighted limits and colimits. As in \cite{GL12:articolo}, we further assume that $\V_0$ is locally finitely presentable as a closed category in the sense of Kelly \cite{Kel82:articolo}; meaning that $\V_0$ is locally finitely presentable and the finitely presentable objects are closed under tensor product and contain the unit. 

In this context we can talk about finite weighted limits \cite[Section~4]{Kel82:articolo}. A weight $M\colon\C\op\to\V$ is called {\em finite} if $\C$ has finitely many objects, and $\C(A,B)$ and $MA$ are finitely presentable in $\V_0$ for any $A,B\in\C$. Existence and preservation of finite weighted limits is equivalent to that of finite conical limits and powers by finitely presentable objects. In short we will say that a $\V$-category (resp. $\V$-functor) is {\em lex} if it has (resp. preserves) finite weighted limits.

Then, following Kelly, we define $F\colon\C\to\V$ to be {\em flat} if $\C$ is small and the $\V$-functor 
$$F*-\colon[\C\op,\V]\to\V,$$
taking colimits weighted by $F$, is lex. In other words, if $F$-weighted colimits commute in the base $\V$ with finite limits. By \cite[Proposition~3.4]{LT22:virtual}, for $F$ to be flat, it is enough that~$F*-$ preserves just the finite limits of representables. 

Unpacking the definition, this means that $F\colon\C\to\V$ is flat if and only if for any finite weight $N\colon\D\to\V$ and any diagram $H\colon\D\to\C$, the comparison map below is an isomorphism.
$$ F*\{N,YH\} \xrightarrow{\ \cong\ } \{N,FH\}$$
With $Y\colon\C\hookrightarrow[\C\op,\V]$ we shall always denote the Yoneda embedding.

Now, to generalize the notion of flatness we need to replace $\V$, in the codomain of $F$, with a lex category $\E$. In this context however, colimits weighted by $F$ are not well defined. To avoid this problem, by \cite{Kel82:libro}, we can replace $F*\{N,YH\}$ with $\{N,YH\}*F$ in the isomorphism above, so that $\{N,YH\}\colon\C\op\to\V$ is now considered as a weight. Therefore we can extend the definition of flatness as follows:

\begin{Def}\label{flat}
	We say that a $\V$-functor $F\colon\C\to\E$, from a small $\V$-category $\C$ into a lex $\V$-category $\E$, is {\em flat} if for any finite weight $N\colon\D\to\V$ and any diagram $H\colon\D\to\C$, the colimit $\{N,YH\}*F$ exists in $\E$ and the comparison map
	$$\{N,YH\}*F\xrightarrow{\ \cong\ }  \{N,FH\} $$
	is an isomorphism.
\end{Def}

It is clear from the observation above, that when $\E=\V$ we recover Kelly's notion of flatness. 

In the dual of \cite[Definition~4.15]{LT22:virtual}, the object $\{N,YH\}$ of $[\C\op,\V]$ is called the {\em virtual limit} of $H$ weighted by $N$. Under this notation, we can then interpret the colimit $\{N,YH\}*F$ as the image of such virtual limit through $F$. Thus, one can say that $F\colon\C\to\E$ is flat if $F$ sends finite virtual limits in $\C$ to actual limits in $\E$.

\begin{obs}
	When $\V=\bo{Set}$ and weighted finite limits are just the usual finite limits, a functor $F\colon\C\to\E$ is flat if and only if for any finite diagram $H\colon\D\to\C$, the colimit $\lim YH*F$ exists in $\E$ and the comparison map induces an isomorphism
	$$ (\lim YH)*F\cong   \lim FH, $$
	which can be rewritten as 
	$$ \colim \left(\text{\Large $\nicefrac{\C}{H}$}\xrightarrow{\ \pi\ }\C\xrightarrow{F}\E\right) \cong  \lim FH$$
	since, as mentioned in the introduction, the category {\Large $\nicefrac{\C}{H}$} of cones over $H$ can also be described as the category of elements $\tx{El}(\lim YH)$.
\end{obs}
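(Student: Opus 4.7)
The plan is to unpack Definition~\ref{flat} in the base $\V=\bo{Set}$ using two standard facts about $\bo{Set}$-enriched categories.

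First, I would reduce from arbitrary finite weights $N\colon\D\to\bo{Set}$ to conical finite diagrams. Every finite weighted limit in a $\bo{Set}$-enriched setting is itself a finite conical limit: explicitly, $\{N,H\}\cong\lim(H\pi\colon\int\! N\to\K)$ where $\int\! N$ is the category of elements of $N$, which is a finite category whenever $N$ is a finite weight. Applied with $H$ replaced by $YH$ and by $FH$ respectively, this shows that the condition of Definition~\ref{flat} for arbitrary finite $N$ and $H$ reduces to the same condition for $N=\Delta 1$ with $H$ replaced by the composite $H\pi\colon\int\! N\to\C$, which is again a finite diagram in $\C$. Hence it is enough to check the isomorphism $(\lim YH)*F\cong\lim FH$ for every finite diagram $H\colon\D\to\C$.

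Next, I would identify the category of elements of $\lim YH$ with the category of cones $\nicefrac{\C}{H}$. Limits in $[\C\op,\bo{Set}]$ are computed pointwise, so $(\lim YH)(C)=\lim\C(C,H-)$, which by definition is the set of cones $\Delta C\to H$ in $\C$. Consequently $\tx{El}(\lim YH)$ has as objects pairs $(C,\delta\colon\Delta C\to H)$ with morphisms induced by precomposition, matching precisely the definition of $\nicefrac{\C}{H}$.

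Finally, I would invoke the familiar identity that for any presheaf $M\colon\C\op\to\bo{Set}$ and any functor $F\colon\C\to\E$, the weighted colimit $M*F$ agrees (when either side exists) with the conical colimit of the composite $\tx{El}(M)\to\C\xrightarrow{F}\E$. This follows from the coend formula $M*F=\int^{C}MC\cdot FC$ together with the canonical presentation $M\cong\colim(\tx{El}(M)\to\C\xrightarrow{Y}[\C\op,\bo{Set}])$ of a presheaf as a conical colimit of representables. Specialised to $M=\lim YH$, this yields the rewriting $(\lim YH)*F\cong\colim(\nicefrac{\C}{H}\xrightarrow{\pi}\C\xrightarrow{F}\E)$, and the comparison map of Definition~\ref{flat} corresponds under this isomorphism to the canonical map from this colimit into $\lim FH$. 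No genuine obstacle arises: the remark is simply an assemblage of standard identities about weighted colimits, pointwise limits of presheaves, and categories of elements.
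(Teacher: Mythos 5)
Your proposal is correct, and it takes essentially the same route the paper has in mind: the paper states this remark without a formal proof, relying implicitly on exactly the three standard facts you assemble (finite weighted limits over $\bo{Set}$ reduce to finite conical limits via the category of elements of the weight, $\tx{El}(\lim YH)$ is the category of cones over $H$, and $M*F$ is the conical colimit of $F$ over $\tx{El}(M)$). Your write-up just makes the implicit argument explicit, with no gaps.
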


\begin{obs}
	For a general $\V$, the simplification above is not possible, since we cannot reduce ourselves to conical colimits. However, using that the finite weighted limit $\{N,YH\}$ is computed pointwise in $\V$, for any $C\in\C$ we have an isomorphism
	$$ \{N,YH\}(C)\cong [\D,\V](N,\C(C,H-)), $$
	which says that the weight $\{N,YH\}$, determining the colimit, is given pointwise by taking the object of cylinders $N\Rightarrow\C(C,H-)$ in $\C$.
\end{obs}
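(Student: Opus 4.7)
The plan is to derive the stated isomorphism from two standard facts about $\V$-enriched weighted limits. First, weighted limits in a presheaf category $[\C\op,\V]$ are computed pointwise: for any weight $N\colon\D\to \V$ and diagram $M\colon\D\to[\C\op,\V]$ one has a natural isomorphism $\{N,M\}(C)\cong \{N,M(-)(C)\}$ in $\V$, for each $C\in\C$. Second, weighted limits in the base $\V$ itself are given by the defining formula $\{N,P\}\cong [\D,\V](N,P)$ for any $P\colon\D\to \V$.

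Applying the first fact to $M=YH\colon\D\to[\C\op,\V]$ reduces the task to identifying the diagram $YH(-)(C)\colon\D\to \V$. By the enriched Yoneda lemma, the presheaf $Y(HD)=\C(-,HD)$ evaluates at $C$ to $\C(C,HD)$, so as a $\V$-functor of $D\in\D$ we have $YH(-)(C)=\C(C,H-)$. Hence $\{N,YH\}(C)\cong \{N,\C(C,H-)\}$ in $\V$, and the second fact then rewrites the right-hand side as $[\D,\V](N,\C(C,H-))$, yielding the claim. Naturality in $C$ is automatic, since each of the three identifications above is natural in the appropriate variable.

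There is no real obstacle here; the content of the remark is simply to record the concrete pointwise description of the weight $\{N,YH\}\in[\C\op,\V]$ that appears in Definition~\ref{flat}, presenting it as the enriched object of cylinders $N\Rightarrow \C(C,H-)$. This makes explicit how the $\V$-enriched notion of flatness, in the absence of the reduction to conical colimits available when $\V=\bo{Set}$, can still be tested against a tangible piece of data for each representable argument $C$.
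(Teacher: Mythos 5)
Your argument is correct and is exactly the justification the paper itself gives (the remark's only stated ingredient is that the weighted limit $\{N,YH\}$ is computed pointwise in $[\C\op,\V]$, which combined with the Yoneda identification $Y(HD)(C)\cong\C(C,HD)$ and the defining formula $\{N,P\}\cong[\D,\V](N,P)$ in $\V$ yields the displayed isomorphism). Your write-up just spells out these standard steps explicitly; there is no divergence from the paper's approach.
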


\begin{obs}
	One should not confuse the notion introduced above with the following: we say that $F\colon\C\to\E$ is {\em representably flat} if $\E(E,F-)\colon\C\to\V$ is flat for any $E\in\E$; this notion was considered for instance in \cite[Section~6.3]{borceux1994handbook} where it is called just flatness. 
	
	\noindent Even when $\V=\E=\bo{Set}$, while our notion coincides with standard flatness, a functor $F\colon\C\to\bo{Set}$ is representably flat if and only if the power $F^J$ is flat for any set $J$. Thus, if $\C\op$ is not filtered (so that the terminal functor $\Delta 1\colon\C\to\bo{Set}$ is not flat) then there are no representably flat functors $F\colon\C\to\bo{Set}$ (since $F^\emptyset\cong \Delta 1$).
\end{obs}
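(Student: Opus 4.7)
The plan is to unpack the definition of representable flatness when $\V=\E=\bo{Set}$, identify representable hom-functors with pointwise powers, and then use the empty power to obstruct representable flatness.

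First, I would translate the definition directly: $F\colon\C\to\bo{Set}$ is representably flat precisely when $\bo{Set}(E,F-)\colon\C\to\bo{Set}$ is flat for every set $E$. For each $C\in\C$ there is a natural isomorphism $\bo{Set}(E,FC)\cong (FC)^E$, and since powers in the functor category $[\C,\bo{Set}]$ are computed pointwise, the functor $\bo{Set}(E,F-)$ coincides with the $E$-th power $F^E$ of $F$. Hence representable flatness is equivalent to flatness of $F^J$ for every set $J$, establishing the first assertion of the remark.

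For the second assertion, I would specialize to $J=\emptyset$. Since $X^\emptyset\cong 1$ for every set $X$, naturally in $X$, the power $F^\emptyset$ equals the constant functor $\Delta 1\colon\C\to\bo{Set}$ regardless of the chosen $F$. Now by Proposition~\ref{flatchar} (applied to the lex domain $\mathbf{1}$ via the unique factorization, or equivalently by the classical characterization of flat $\bo{Set}$-valued functors), $\Delta 1$ is flat if and only if its category of elements is filtered; but the category of elements of $\Delta 1$ is isomorphic to $\C\op$ itself, so $\Delta 1$ is flat iff $\C\op$ is filtered. Thus, whenever $\C\op$ fails to be filtered, $F^\emptyset$ fails to be flat, and hence no $F\colon\C\to\bo{Set}$ can be representably flat.

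There is essentially no obstacle here: the whole claim reduces to the pointwise description of powers in a functor category and the well-known equivalence between flatness of a set-valued functor and filteredness of its category of elements. The only point worth flagging carefully is that $F^\emptyset$ is independent of $F$, which is precisely what lets a single obstruction (non-filteredness of $\C\op$) rule out representable flatness for all $F$ at once.
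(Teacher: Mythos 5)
Your argument is correct and follows essentially the same route the paper intends for this remark: identify $\bo{Set}(E,F-)$ with the pointwise power $F^E$, observe that $F^\emptyset\cong\Delta 1$ independently of $F$, and conclude via the classical fact that $\Delta 1\colon\C\to\bo{Set}$ is flat exactly when $\C\op$ is filtered. The only cosmetic point is that the last step rests on the category-of-elements characterization of flatness (as you note in your parenthetical) rather than on Proposition~\ref{flatchar}, which concerns lex domains.
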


\begin{obs}\label{kar}
	Other general notions of flatness where previously studied (in the ordinary context) by Karazeris~\cite{kar04} for functors $F\colon\C\to\E$ into a category $\E$ with a chosen Grothendieck topology $j$; this relies on $F$ satisfying three filteredness conditions in the internal logic of $(\E,j)$. Using this notion of $j$-flatness, which was later thoroughly studied by Shulman~\cite{shulman2012exact}, it is then possible to capture regular, exact, lextensive, and pretopos completions of non-lex categories --- by choosing in each case the relevant topology. 
	
	However, unlike the notions of \cite{kar04,shulman2012exact}, our flat functors do not depend on the choice of a Grothendieck topology on the codomain category, and thus provides the same notion for all the different examples mentioned above (see Sections~\ref{weaklylex}, \ref{fc}, and~\ref{lextesive-sect}). 
\end{obs}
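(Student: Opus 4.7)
The remark contains two assertions that I would handle separately. The first --- that Definition~\ref{flat} involves no choice of Grothendieck topology on $\E$ --- requires no argument beyond inspection: the condition quantifies over finite weights $N\colon\D\to\V$ and diagrams $H\colon\D\to\C$ and asks only that the comparison $\{N,YH\}*F\to\{N,FH\}$ be invertible in the ambient $\V$-category~$\E$; no covering sieves, singleton covers, or sheaf conditions on $\E$ appear anywhere.

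The substantive content is the forward reference: that this single topology-free notion specializes correctly to the regular/exact, lextensive, and pretopos settings addressed in Sections~\ref{weaklylex}, \ref{fc}, and~\ref{lextesive-sect}. My plan for each case is to fix the class $\Phi$ of lex weights governing the completion and compare flatness with the property that Karazeris and Shulman attach to the canonical topology associated with $\Phi$-exact categories. Concretely, for $\Phi=\Phi_{\tx{reg}}$ or $\Phi_{\tx{ex}}$ one matches flatness with left coveringness by showing that $\{N,YH\}$ is represented in a weakly lex $\C$ by the relevant weak finite limit, rewriting the colimit $\{N,YH\}*F$ as a coequalizer built from images of that weak limit, and recognizing the invertibility of the comparison as the regular-epi factorization condition defining left covering functors. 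The lextensive and pretopos cases proceed in parallel, with finite multilimits and finite fc-limits in place of weak limits, the relevant weights tracking the shape of the covering families.

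The main obstacle I anticipate is the comparison with Karazeris--Shulman's $j$-flatness in the pretopos case: their condition is internal to $\tx{Sh}(\E,j)$ and phrased as three filteredness conditions in the internal logic, whereas ours is external and diagrammatic. Bridging the two requires translating each internal filteredness condition into the preservation of a specific virtual finite limit in $\C$ with respect to the canonical topology $j$ generated by the $\Phi$-exactness structure, and vice versa. Once this translation is made explicit, the independence from the topology asserted in the remark follows from the observation that $\Phi$ already determines the relevant topology canonically, so no separate Grothendieck topology on $\E$ needs to be selected; the uniformity across examples then reduces to the fact that the single defining isomorphism of Definition~\ref{flat}, evaluated on the shapes encoded by the chosen~$\Phi$, recovers the topology-specific notion in each framework.
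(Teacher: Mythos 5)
This statement is a remark, and the paper offers no proof of it; its only verifiable content is (i) that Definition~\ref{flat} makes no reference to a Grothendieck topology, which, as you say, is settled by inspection, and (ii) the forward-referenced claim that the one topology-free notion specializes correctly in each example, which the paper establishes later as separate propositions in Sections~\ref{weaklylex}, \ref{fc}, and~\ref{lextesive-sect}. Your sketches for those specializations track the paper's actual arguments closely: in each case one identifies $\lim YH$ in $[\C\op,\bo{Set}]$ with, respectively, a regular quotient of a representable (weak limits), a coproduct of representables (multilimits), or a regular quotient of a finite coproduct of representables (fc-limits), and then reads off the comparison map. Where you depart from the paper is in proposing to bridge your notion with the Karazeris--Shulman internal-logic filteredness conditions directly: the paper never carries out that translation, and the remark does not require it --- it asserts only that the present notion is independent of any topology and uniform across the examples, not that it is provably equivalent to $j$-flatness for the relevant canonical topology in each case. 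That equivalence does follow indirectly (both notions characterize the functors whose Kan extension to the free completion is exact, by the universal properties in \cite{kar04} and in Section~\ref{main-sect} respectively), but attempting the internal-to-external dictionary head-on, as you propose, is a substantial undertaking that the remark neither needs nor claims; I would drop that step and let the later propositions carry the weight.
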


\begin{obs}\label{large}
	As pointed out in the definition, our flat $\V$-functors $\C\to\E$ will always have a small domain $\C$; this is because the collection of all presheaves $F\colon \C\op\to\V$ forms the class of objects of a $\V$-category $[\C\op,\V]$ only when $\C$ is small (while that does not happen in general for a large $\C$). However, the definition, as well as all the results of this paper involving flatness, can be extended to any $\V$-category $\C$ arguing as below.\\
	Given any (possibly large) $\V$-category $\C$, we can consider an universe enlargement $\mathbb V$ of $\V$ (as in \cite[2.6]{Kel82:libro}) for which $\C$ is a small $\mathbb V$-category (meaning that $\mathbb V$ has limits and colimits of size greater than the cardinality of $\tx{Ob}(\C)$). This way the notion of flat $\mathbb V$-functor out of $\C$ is well defined and all the results below still apply since we can see every $\V$-category as a $\mathbb V$-category. 
\end{obs}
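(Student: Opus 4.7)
The plan is to verify that Kelly's universe-enlargement technique from \cite[2.6]{Kel82:libro} applies directly to the present setup, so that Definition~\ref{flat} and every subsequent result involving flatness extend to large $\C$ after only formal rewriting. First I would fix a Grothendieck universe $\mathbb V$ strictly containing $\V$ and chosen so that $\C$ becomes a small $\mathbb V$-category; concretely, the inaccessible cardinal defining $\mathbb V$ must exceed the cardinality of $\tx{Ob}(\C)$ together with the cardinalities of all hom-objects of $\C$ in $\V_0$. The only subtle check at this stage is that $\mathbb V_0$ remains locally finitely presentable as a closed category, with finitely presentable objects closed under tensor and containing the unit. This holds because finite presentability is characterized internally by the hom-functor preserving filtered colimits, a property intrinsic to each object and insensitive to universe size: the finitely presentable objects of $\mathbb V_0$ coincide with those of $\V_0$, and they remain a dense generator closed under the required operations and containing $I$.

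Next I would show that $\V$-enriched data embed faithfully into $\mathbb V$-enriched data. A lex $\V$-category $\E$ is automatically a lex $\mathbb V$-category with the same finite weighted limits, because finite weights are finitely presentable weights, an intrinsic notion unchanged by the enlargement. Likewise, any $\V$-functor $F\colon\C\to\E$ is a $\mathbb V$-functor, and Definition~\ref{flat} applied inside $\mathbb V$ makes literal sense. Crucially, the comparison morphism
$$ \{N,YH\}*F \xrightarrow{\ \cong\ } \{N,FH\} $$
is a statement about objects of $\E$, and its truth value does not depend on the ambient universe.

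I would then observe that each later proof in the paper uses smallness of $\C$ only to form the presheaf category $[\C\op,\V]$ and to manipulate $\C$-indexed weighted colimits; both operations remain available in the enlarged $\mathbb V$-setting, where $[\C\op,\mathbb V]$ is a legitimate $\mathbb V$-category. Hence the comparison isomorphisms, preservation properties, and free exact completions produced by those proofs live in categories whose descriptions are universe-invariant, and the statements transfer verbatim once rewritten over $\mathbb V$.

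The main obstacle is really only expository rather than mathematical: the enlargement $\mathbb V$ must be chosen to suit the particular large $\C$ under consideration, so the remark should be read as a schema of theorems rather than a single universal statement. Once this convention is accepted, no further argument beyond the universe-theoretic bookkeeping above is needed, which is why it seems appropriate to record the extension as a remark rather than as a separately stated and proved theorem.
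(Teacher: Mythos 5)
Your proposal is correct and follows essentially the same route as the paper: the remark is justified by appealing to Kelly's universe enlargement $\mathbb V$ of $\V$ (\cite[2.6]{Kel82:libro}), chosen so that $\C$ becomes $\mathbb V$-small, after which the definition and all results transfer because every $\V$-category is a $\mathbb V$-category. The one point you elaborate that deserves care --- that $\mathbb V_0$ is again locally finitely presentable as a closed category with $\mathbb V_{0f}\simeq\V_{0f}$, so that finite weights and finite limits are unchanged --- is exactly what the paper records later (in the remark preceding Theorem~\ref{main}, where $\mathbb V_0:=\tx{Lex}(\V_{0f}\op,\bo{SET})$), though your heuristic that finite presentability is ``insensitive to universe size'' is really a consequence of that explicit construction rather than an a priori fact.
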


Given $F\colon\C\to\E$, with $\E$ lex, the left Kan extension $\tx{Lan}_YF$ need not exist; thus we cannot express flatness in the same way defined by Kelly for presheaves. To solve this we need to consider a codomain restriction of the Yoneda embedding to a suitable subcategory of $[\C\op,\V]$: 

\begin{Def}
	Given a small $\V$-category $\C$ we denote by $\C_l$ the full subcategory of $[\C\op,\V]$ spanned by the finite limits of representables, and by $Z\colon\C\hookrightarrow\C_l$ the inclusion.
\end{Def}

Since the Yoneda embedding preserves limits, $\C=\C_l$ whenever $\C$ is lex. We do not know if $\C_l$ is actually closed in $[\C\op,\V]$ under finite limits. 

Then the notion of flatness can be rephrased as follows:

\begin{prop}\label{flat-prop}
	A $\V$-functor $F\colon\C\to\E$, with $\C$ small and $\E$ lex, is flat if and only if $\tx{Lan}_ZF\colon\C_l\to\E$ exists and preserves finite limits of diagrams from $\C$. 
\end{prop}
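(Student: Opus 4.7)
The plan is to read Proposition~\ref{flat-prop} as a direct translation of Definition~\ref{flat} under the pointwise formula for left Kan extension along a fully faithful functor. Since $Z\colon \C \hookrightarrow \C_l$ is fully faithful, the Yoneda lemma identifies $\C_l(Z-,M)\cong M$ for any $M\in\C_l$, so the pointwise formula reads
$$ (\tx{Lan}_Z F)(M) \;=\; \C_l(Z-, M)*F \;\cong\; M*F, $$
and the Kan extension exists at $M$ exactly when the colimit $M*F$ does. In particular, once $\tx{Lan}_Z F$ exists, the usual fact that left Kan extensions along fully faithful functors restrict back to the original gives $(\tx{Lan}_Z F)\circ Z \cong F$.

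Next I would use the very definition of $\C_l$: every object is $\{N,YH\}$ for some finite weight $N\colon\D\to\V$ and diagram $H\colon\D\to\C$, and since $\C_l$ is full in $[\C\op,\V]$ and this presheaf already lies in $\C_l$, the finite limit of $ZH$ weighted by $N$ computed in $\C_l$ coincides with $\{N,YH\}$. Combined with the previous paragraph, the pointwise existence of $\tx{Lan}_Z F$ over all of $\C_l$ is equivalent to the existence in $\E$ of $\{N,YH\}*F$ for every such pair $(N,H)$, i.e.\ the existence clause in Definition~\ref{flat}.

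With these identifications in place, preservation by $\tx{Lan}_Z F$ of such a finite limit amounts to asking that the comparison
$$ (\tx{Lan}_Z F)(\{N,YH\}) \;\longrightarrow\; \{N,\,(\tx{Lan}_Z F)\circ ZH\} $$
be an isomorphism in $\E$, which under the substitutions $(\tx{Lan}_Z F)(\{N,YH\})\cong \{N,YH\}*F$ and $(\tx{Lan}_Z F)\circ ZH\cong FH$ becomes exactly $\{N,YH\}*F\cong\{N,FH\}$ as demanded by flatness. Running the argument in both directions yields the two implications. The main piece of bookkeeping is verifying that the $\C_l$-limit of $ZH$ weighted by $N$ really agrees with the presheaf $\{N,YH\}$, which is essentially the whole point of having introduced $\C_l$; once this is clear, the rest is simply unwinding definitions.
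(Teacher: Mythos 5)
Your proposal is correct and follows essentially the same route as the paper's own (much terser) proof: identify each object of $\C_l$ with a finite limit $\{N,YH\}$, use the pointwise Kan extension formula to get $\tx{Lan}_ZF(\{N,YH\})\cong\{N,YH\}*F$, and match the limit-preservation comparison map with the comparison map in the definition of flatness. The extra bookkeeping you supply (that $\C_l(Z-,M)\cong M$, that $\tx{Lan}_ZF\circ Z\cong F$ by full faithfulness of $Z$, and that the $\C_l$-limit of $ZH$ weighted by $N$ is computed as in $[\C\op,\V]$) is exactly what the paper leaves implicit.
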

\begin{proof}
	Note that to give a finite limit of the form $\{N,YH\}$ in $[\C\op,\V]$ is the same as giving an object of $\C_l$, and in that case $\tx{Lan}_ZF(\{N,YH\})$ is by definition the colimit $\{N,YH\}*F $. Thus the isomorphisms involved in the definition of flatness hold if and only if $\tx{Lan}_ZF$ exists preserves finite limits of objects from $\C$.
\end{proof}

\begin{obs}
	When $\E$ is cocomplete, so that the colimit $\{N,YH\}*F$ always exists, the condition above is saying that the $\V$-functor $$\tx{Lan}_YF\cong-*F\colon [\C\op,\V]\to\E$$ preserves finite limits of representables.
\end{obs}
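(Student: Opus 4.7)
The plan is to observe that the remark is essentially a translation of Proposition~\ref{flat-prop} from the codomain-restricted Kan extension $\tx{Lan}_ZF\colon\C_l\to\E$ to the full Kan extension $\tx{Lan}_YF\colon[\C\op,\V]\to\E$, available once $\E$ is cocomplete. First I would recall the standard pointwise formula: when $\E$ is cocomplete, $\tx{Lan}_YF$ exists and is given by $\tx{Lan}_YF(M)\cong M*F$ for every $M\in[\C\op,\V]$, and moreover every $M*F$ can be computed in $\E$ since all the required colimits exist there.

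Next I would identify the finite limits of representables in $[\C\op,\V]$ with the objects of $\C_l$: by definition, such a limit is of the form $\{N,YH\}$ for some finite weight $N\colon\D\to\V$ and diagram $H\colon\D\to\C$. For a cocone with apex an object of $[\C\op,\V]$, preservation of this finite limit by $\tx{Lan}_YF$ amounts to the canonical comparison
\[
\tx{Lan}_YF(\{N,YH\})\longrightarrow\{N,\tx{Lan}_YF(YH)\}\cong\{N,FH\}
\]
being an isomorphism, where the last isomorphism uses that $\tx{Lan}_YF\circ Y\cong F$. Substituting the pointwise formula from the first step rewrites the left-hand side as $\{N,YH\}*F$, so the preservation condition becomes precisely the isomorphism appearing in Definition~\ref{flat}.

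Finally, I would conclude by comparing with Proposition~\ref{flat-prop}: the condition there asks that $\tx{Lan}_ZF$ exist on $\C_l$ and send finite limits of diagrams from $\C$ (i.e.\ precisely the finite limits of representables, viewed inside $\C_l$) to the corresponding limits in $\E$. Under cocompleteness, $\tx{Lan}_ZF$ is nothing but the restriction of $\tx{Lan}_YF=-*F$ to $\C_l$, so the two formulations are literally the same condition. No genuine obstacle arises beyond carefully identifying the canonical comparison maps from $M*F$ into limits in $\E$ with those produced by $\tx{Lan}_YF$; this is standard bookkeeping for pointwise Kan extensions along Yoneda.
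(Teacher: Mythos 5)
Your proposal is correct and follows essentially the same route the paper takes: the remark is stated without a separate proof, but its justification is exactly the argument of Proposition~\ref{flat-prop}, namely identifying finite limits of representables with objects $\{N,YH\}$ and using the pointwise formula $\tx{Lan}_YF(M)\cong M*F$ (valid since $\E$ is cocomplete) together with $\tx{Lan}_YF\circ Y\cong F$ to match the comparison map with the isomorphism in Definition~\ref{flat}. Your careful bookkeeping of the comparison maps is exactly the intended content.
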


A few properties that will be used in the next sections are:

\begin{prop}\label{flatchar}
	Let $\C$ be a small $\V$-category. Then\begin{enumerate}\setlength\itemsep{0.25em}
		\item every representable $\C(C,-)$ is flat;
		\item if $\C$ is lex, $F\colon\C\to\E$ is flat if and only if it is lex;
		\item if $\E$ is lex and $J\colon\C\hookrightarrow\E$ is fully faithful and dense, then $J$ is flat.
	\end{enumerate}
\end{prop}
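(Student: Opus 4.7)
For (1), the plan is to invoke the co-Yoneda lemma. Given a finite weight $N\colon\D\to\V$ and a diagram $H\colon\D\to\C$, the colimit $\{N,YH\}*\C(C,-)$ can be computed as
$$\{N,YH\}*\C(C,-)\;\cong\;\{N,YH\}(C)\;\cong\;\{N,\C(C,H-)\}=\{N,\C(C,-)\circ H\},$$
where the first iso is co-Yoneda and the second uses that finite weighted limits in $[\C\op,\V]$ are computed pointwise. A short check shows this is the canonical comparison map of Definition~\ref{flat}, so representables are flat.

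For (2), the key observation is that when $\C$ is lex the Yoneda embedding preserves finite weighted limits, so every object $\{N,YH\}$ of $\C_l$ is representable, namely $Y\{N,H\}$; hence $Z\colon\C\hookrightarrow\C_l$ is essentially surjective and $\tx{Lan}_Z F = F$. By Proposition~\ref{flat-prop}, $F$ is flat iff $F$ preserves finite limits of diagrams from $\C$, which is precisely lex-ness. Alternatively, one may unwind the definition directly: if $F$ is lex then by co-Yoneda $\{N,YH\}*F\cong Y\{N,H\}*F\cong F\{N,H\}\cong\{N,FH\}$; conversely, applying the flatness isomorphism to the limit $\{N,H\}$ yields $F\{N,H\}\cong\{N,FH\}$, so $F$ is lex.

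For (3), the plan is to exploit density in the form $\tx{Lan}_J J\cong 1_\E$, equivalently $E\cong \E(J-,E)*J$ for every $E\in\E$. Given a finite weight $N\colon\D\to\V$ and a diagram $H\colon\D\to\C$, since $\E$ is lex the limit $\{N,JH\}$ exists, and the strategy is to identify it with the required colimit. Using full faithfulness of $J$ and the pointwise description of weighted limits of representables, we compute, for every $C\in\C$,
$$\E(JC,\{N,JH\})\;\cong\;\{N,\E(JC,JH-)\}\;\cong\;\{N,\C(C,H-)\}\;\cong\;\{N,YH\}(C),$$
naturally in $C$, hence $\E(J-,\{N,JH\})\cong\{N,YH\}$ in $[\C\op,\V]$. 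Substituting into the density formula gives
$$\{N,JH\}\;\cong\;\E(J-,\{N,JH\})*J\;\cong\;\{N,YH\}*J,$$
and a routine naturality check shows this isomorphism coincides with the comparison map from Definition~\ref{flat}, so $J$ is flat. The main thing to get right is this last compatibility; once the two presheaves $\E(J-,\{N,JH\})$ and $\{N,YH\}$ are identified naturally, the density isomorphism does the rest.
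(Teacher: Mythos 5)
Your proposal is correct and follows essentially the same route as the paper: part (2) is identical, and for (1) and (3) you verify the defining isomorphism directly (via co-Yoneda and the density counit $\E(J-,E)*J\cong E$) where the paper packages the same computations through Proposition~\ref{flat-prop}, identifying $\tx{Lan}_Z\C(C,-)$ with $\C_l(ZC,-)$ and $\tx{Lan}_ZJ$ with the limit-preserving inclusion of $\C_l$ into $\E$. The one point needing care in (3) --- that your abstract isomorphism agrees with the canonical comparison map --- is indeed routine, since that comparison is induced by the weight map $\{N,YH\}\to\E(J-,\{N,JH\})$, which full faithfulness of $J$ makes invertible.
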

\begin{proof}
	(1). If $C\in\C$ then $\tx{Lan}_Z\C(C,-)\cong\C_l(ZC,-)$, and $\C_l(ZC,-)$ preserves any existing limits; thus $\C(C,-)$ is flat.
	
	(2). If $\C$ is lex, then $\C=\C_l$ and $\tx{Lan}_ZF\cong F$. Thus $F$ is flat if and only it it is lex.
	
	(3). If $\E$ is lex and $J\colon\C\to\E$ is fully faithful and dense, then the Yoneda embedding $Y\colon\C\to[\C\op,\V]$ factors through $J$ and $\E$ is identified with a full subcategory of $[\C\op,\V]$ containing the representables. Because of this, and since $\E$ is lex, the inclusion of $\E$ in $[\C\op,\V]$ preserves finite limits; thus $\E$ contains $\C_l$ as well. It is now easy to see that $\tx{Lan}_ZJ$ is isomorphic to the inclusion of $\C_l$ into $\E$, which preserves any existing limit. It follows that $J$ is flat.
\end{proof}

We next recall from \cite{GL12:articolo} the notion of small-exact $\V$-category. Below we denote by $\P\D$ the free cocompletion of a category $\D$ under all colimits. When $\D$ is small then $\P\D=[\D\op,\V]$. Moreover $\P\D$ is lex whenever $\D$ is \cite[Remark~6.6]{DL07}.

\begin{Def}
	We say that a $\V$-category $\T$ is a {\em small-exact} if it is a left exact localization of $\P\D$ for some lex category $\D$. In other words, $\T$ is small-exact if there exists a fully faithful $J\colon\T\hookrightarrow\P\D$, into a lex $\D$, which has a lex left adjoint.
\end{Def}

By construction every small-exact $\V$-category is cocomplete. If $\D$ above can be chosen to be small, then $\T$ is called a $\V$-topos in \cite{GL12:articolo}.

The following can be seen as a generalization of \cite[Lemma~2.7]{LT21:articolo} and \cite[Proposition~3.4]{LT22:virtual} to our context.

\begin{prop}\label{V-topos}
	Let $\T$ be small-exact and $\C$ be a small $\V$-category. A $\V$-functor $F\colon\C\to\T$ is flat if and only if $$\tx{Lan}_YF\colon [\C\op,\V]\longrightarrow\T$$ is lex.
	If in addition we are given $J\colon\C\to\B$ into a small $\V$-category $\B$, the following hold: \begin{enumerate}\setlength\itemsep{0.25em}
		\item[(a)] if $  F  $ is flat then also $\tx{Lan}_J  F $ is;
		\item[(b)] if $J$ is fully faithful and $\tx{Lan}_{J} F  $ is flat then $  F  $ is flat as well.
	\end{enumerate}
\end{prop}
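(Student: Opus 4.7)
For the easy direction of the biconditional, if $\tx{Lan}_YF$ is lex then it preserves in particular all finite limits of representables, and by the remark following Proposition~\ref{flat-prop} (applied to the cocomplete codomain $\T$) this is precisely flatness of $F$.

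For the hard direction, I would present $\T$ as a lex localization $L\dashv I\colon\T\hookrightarrow\P\D$ with $L$ lex, $I$ fully faithful, and $LI\cong 1_\T$. By uniqueness of colimit-preserving extensions along the Yoneda embedding $Y\colon\C\to\P\C$, one obtains
\begin{equation*}
\tx{Lan}_YF\;\cong\;L\circ\tx{Lan}_Y(IF)\colon\P\C\to\T.
\end{equation*}
The plan is then to reduce to the presheaf case $\T=\P\D$, where limits and colimits are pointwise: for any $G\colon\C\to\P\D$, the functor $\tx{Lan}_YG$ is lex iff each $\tx{ev}_d\circ\tx{Lan}_YG\cong\tx{Lan}_Y(\tx{ev}_dG)$ is lex (using cocontinuity of $\tx{ev}_d$), and analogously $G$ is flat iff each $\tx{ev}_dG\colon\C\to\V$ is flat. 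The $\V$-valued case \cite[Proposition~3.4]{LT22:virtual} then closes the presheaf case.

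For part (a), the first part of the proposition gives $\tx{Lan}_YF$ lex; a standard Fubini computation yields the identity
\begin{equation*}
\tx{Lan}_{Y_\B}(\tx{Lan}_JF)\;\cong\;\tx{Lan}_YF\circ J^{*},
\end{equation*}
where $J^{*}\colon\P\B\to\P\C$ denotes precomposition with $J$, a lex right adjoint; composing with the lex $\tx{Lan}_YF$ shows $\tx{Lan}_{Y_\B}(\tx{Lan}_JF)$ is lex, and applying the first part to $\tx{Lan}_JF\colon\B\to\T$ yields its flatness. For part (b), full faithfulness of $J$ gives $J^{*}\{N,Y_\B JH'\}\cong\{N,Y_\C H'\}$ for any finite weight $N\colon\D'\to\V$ and diagram $H'\colon\D'\to\C$; combining this with the same identity and the lex-ness of $\tx{Lan}_{Y_\B}(\tx{Lan}_JF)$ (from flatness of $\tx{Lan}_JF$ and the first part), we compute $\{N,Y_\C H'\}*F\cong\{N,FH'\}$ in $\T$, which is exactly flatness of $F$.

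The main obstacle will be the reduction from small-exact $\T$ to $\P\D$ in the hard direction: although $\tx{Lan}_YF\cong L\tx{Lan}_Y(IF)$ with $L$ lex, lex-ness of the composite does not follow formally, as $\tx{Lan}_Y(IF)$ (equivalently, flatness of $IF$ in $\P\D$) is not automatic from flatness of $F$ in $\T$, because the right adjoint $I$ need not preserve the colimits $\{N,YH\}*F$. The resolution I would pursue is to show that for every finite diagram $G\colon\D'\to\P\C$ with weight $M$, the canonical comparison $\{M,G\}*IF\to\{M,G{-}*IF\}$ in $\P\D$ is an $L$-local isomorphism: flatness of $F$ handles the base case when $G$ takes values in representables, and one extends to arbitrary $G$ via density of the representables in $\P\C$, exploiting that the class of $L$-local isomorphisms is closed under the colimits produced (since $L$ is both cocontinuous and lex).
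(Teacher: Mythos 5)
Your easy direction and your parts (a) and (b) are essentially the paper's own argument: the identity $\tx{Lan}_{Y'}(\tx{Lan}_JF)\cong\tx{Lan}_YF\circ[J\op,\V]$ (your $J^{*}$), together with the lex right adjoint $\tx{Ran}_{J\op}$ in the fully faithful case, is exactly how the paper deduces (a) and (b) from the first assertion. The problem is the hard direction of the biconditional, and it sits precisely at the point you flag as the ``main obstacle''. Your proposed resolution --- prove that the comparison $\tx{Lan}_Y(IF)\{M,G\}\to\{M,\tx{Lan}_Y(IF)\circ G\}$ is an $L$-local isomorphism for every finite weight $M$ and every diagram $G$ in $[\C\op,\V]$, starting from representable-valued $G$ and ``extending via density'' --- does not close. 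The base case is fine: applying the cocontinuous lex $L$ turns the comparison into $\{M,YH\}*F\to\{M,FH\}$, an isomorphism by flatness. But the class of diagrams $G$ for which the comparison is an $L$-local isomorphism is not closed under the colimits occurring in a density presentation: writing each $GD'$ as an (unrestricted) colimit of representables does not exhibit $\{M,G\}$, nor the comparison map itself, as a colimit of the comparison maps for representable-valued diagrams, because the finite limit $\{M,-\}$ does not commute with those colimits. Closure of $L$-local isomorphisms under colimits is therefore never applicable. The passage from ``preserves finite limits of representables'' to ``preserves all finite limits'' for a cocontinuous functor is the entire content of the statement; it is what \cite[Proposition~3.4]{LT22:virtual} establishes in the $\V$-valued case and it cannot be recovered by a formal density argument over the localization.

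The paper bridges this step differently. It takes $\B:=\tx{Fin}^\dagger\C$, the free completion of $\C$ under finite limits, and shows by a direct computation with flatness that $\tx{Lan}_JF$ preserves finite limits of diagrams landing in $\C$; since every object of $\B$ is a $J$-absolute such limit, $\tx{Lan}_JF$ preserves a codensity presentation of $J$, whence $\tx{Lan}_JF\cong\tx{Ran}_JF$ by \cite[Proposition~2.2]{Day77:articolo}, and the universal property of $\B$ makes this functor lex. It then invokes \cite[Proposition~2.4(5)]{GL12:articolo} --- the left Kan extension along Yoneda of a lex functor from a small lex category into a small-exact category is lex --- and concludes via $\tx{Lan}_YF\cong\tx{Lan}_{Y'}(\tx{Lan}_JF)\circ\tx{Ran}_{J\op}$. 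If you wish to keep your localization-plus-pointwise strategy, note that it does work once the domain is lex: $I\circ\tx{Lan}_JF$ has lex domain $\B$ and is lex, hence each $\tx{ev}_d\circ I\circ\tx{Lan}_JF$ is flat by Kelly's theorem, and your pointwise reduction then gives lexness of $\tx{Lan}_{Y'}(I\circ\tx{Lan}_JF)$ and hence of its composite with $L$. In other words, some detour through a lex domain (or through the soundness of finite limits) is needed to replace the unjustified density step.
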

\begin{proof}
	Given a $\V$-functor $J\colon\C\to\B$ we can consider the following triangle
	\begin{center}
		\begin{tikzpicture}[baseline=(current  bounding  box.south), scale=2]
			
			\node (a) at (0.7,0.7) {$[\C\op,\V]$};
			\node (c) at (0, 0) {$[\B\op,\V]$};
			\node (d) at (1.8, 0) {$\T$};
			
			\path[font=\scriptsize]
			
			(c) edge [->] node [above] {$[J\op,\V]\ \ \ \ \ \ \ \ \ \ \ $} (a)
			(a) edge [->] node [above] {$\ \ \ \ \ \ \ \tx{Lan}_YF$} (d)
			(c) edge [->] node [below] {$\tx{Lan}_{Y'}(\tx{Lan}_JF)$} (d);
			
		\end{tikzpicture}	
	\end{center} 
	where $Y'$ is the Yoneda embedding for $\B$. This commutes since $\tx{Lan}_YF\circ [J\op,\V]$ is cocontinuous and its restriction to $\B$ coincides with $\tx{Lan}_JF$. Moreover, if $J$ is fully faithful, the isomorphisms below hold.
	\begin{align}\label{isos-lan}
			\tx{Lan}_YF&\cong \tx{Lan}_YF\circ \tx{id}_{[\B,\V]}\nonumber \\
			&\cong \tx{Lan}_YF\circ [J\op,\V]\circ \tx{Ran}_{J\op}\nonumber\\
			&\cong (\tx{Lan}_{Y'}(\tx{Lan}_JF))\circ \tx{Ran}_{J\op}
	\end{align}
	It is clear that if $\tx{Lan}_YF$ preserves all finite limits then $F$ is flat. Conversely, assume that $F\colon\C\to\T$ is flat, so that $\tx{Lan}_YF$ preserves finite limits of representables. Consider $\B:=\tx{Fin}^\dagger\C$, the free completion of $\C$ under finite limits, with $J\colon\C\to\B$ the inclusion. We want to prove that $\tx{Lan}_JF$ is lex (and hence flat). Note first that $\tx{Lan}_JF$ preserves finite limits of diagrams landing in $\C$: take a finite weight $N \colon \D\to\V$ and a diagram $H\colon \D\to \C$ then
	\begin{align}\label{fin-lim-reps}
		(\tx{Lan}_JF)\{ N ,JH\}:&=  \B(J-,\{ N ,JH\})*F\nonumber \\
		&\cong  \{ N \square,\C(-,H\square)\}*F-\nonumber \\
		&\cong \{ N \square, \C(-,H\square)*F- \}\\
		&\cong \{ N , F \circ H\}\nonumber \\
		&\cong \{ N , (\tx{Lan}_JF)\circ JH\}\nonumber
	\end{align}
	where (\ref{fin-lim-reps}) follows from the fact that $-* F\cong \tx{Lan}_YF$ preserves finite limits of representables.\\
	Now, every object of $\B$ is a $J$-absolute finite limit of a diagram landing in $\C$; therefore $\tx{Lan}_JF$ preserves all the $J$-absolute limits of a chosen codensity presentation of $J$. By \cite[Proposition~2.2]{Day77:articolo} we then have $\tx{Lan}_JF\cong \tx{Ran}_JF$. But $\B$ is the free completion of $\C$ under finite limits, therefore the functor $\tx{Ran}_JF$ is lex by the universal property of such completion. This means that $\tx{Lan}_JF$ is lex. To conclude then note that $\tx{Lan}_{Y'}(\tx{Lan}_JF)$ is also lex by \cite[Proposition~2.4(5)]{GL12:articolo}; thus $\tx{Lan}_YF$ is lex since it can be expressed as the composition of lex $\V$-functors by (\ref{isos-lan}). This proves the first part of the statement.
	
	The proof of $(a)$ and $(b)$ now follows easily from what just proved and the initial arguments of the proof. If $F$ is flat then $\tx{Lan}_YF$ is lex; thus also $\tx{Lan}_{Y'}(\tx{Lan}_JF)$ is lex (by the commutativity of the triangle above), and hence $\tx{Lan}_JF$ is flat. Conversely, if $J$ is fully faithful and $\tx{Lan}_JF$ is flat, then $\tx{Lan}_YF$ is lex being the composite of two lex functors by (\ref{isos-lan}), so that $F$ is flat too.
\end{proof}

\section{Free $\Phi$-exact completions}\label{main-sect}

Consider now a class $\Phi$ of {\em lex-weights} as in \cite{GL12:articolo}; that is, a class $\Phi$ of weights of the form $M\colon\C\op\to\V$ with $\C$ a small lex $\V$-category. By a {\em $\Phi$-lex colimit} in a $\V$-category $\E$ we mean a colimit of the form $M*H$ where $M\colon\C\op\to\V$ is in $\Phi$ and $H\colon\C\to\E$ is lex.

For each lex $\V$-category $\E$, we define the $\V$-category $\Phi_l\E$ as the closure of the representables in $\P\E$ under finite limits and $\Phi$-lex-colimits. Then \cite{GL12:articolo} (but not us) defines a $\V$-category $\E$ to be $\Phi$-exact if the inclusion $\E\hookrightarrow\Phi_l\E$ has a lex left adjoint. Because of the nature of our proofs, the definition of $\Phi$-exact $\V$-category that we consider is slightly stronger than the one just mentioned; see below and Remark~\ref{small}.

\begin{Def}\label{exact}
	We say that a $\V$-category $\E$ is $\Phi$-exact if it can be identified with a full subcategory of a small-exact $\V$-category $\T$ which is closed under finite limits and $\Phi$-lex colimits. A $\Phi$-exact $\V$-functor is one that preserves finite limits and $\Phi$-lex colimits.
\end{Def}

By \cite[Theorem~4.1]{GL12:articolo}, if a $\V$-category $\E$ is $\Phi$-exact in the sense defined above, then is also $\Phi$-exact in the sense of \cite{GL12:articolo}, and the two notions coincide whenever $\E$ is small. 

\begin{obs}\label{small}
	It is worth saying a few words on why we need to consider the notion of $\Phi$-exactness given above rather than the one of Garner and Lack. 
	
	\noindent In the proof of the main theorem of this section (Theorem~\ref{main}) we need to show that the left Kan extension of a flat $F\colon\C\to\E$, into a $\Phi$-exact $\E$, along the inclusion $\C\hookrightarrow\Phi_l\C$ is a lex $\V$-functor. The way we achieve this, is by proving it first for any small-exact $\V$-category $\T$ (using Proposition~\ref{V-topos}), and then by extending it to any $\E$ using an embedding as in the definition above. This makes such an embedding a key ingredient of our proof.
	
	We should point out however, that the embedding of Definition~\ref{exact} consists merely in a {\em smallness} assumption on our $\Phi$-exact $\V$-categories. Indeed, the $\Phi$-exact embedding into a small-exact $\T$ can be obtained also for any $\Phi$-exact $\V$-category $\E$ in the sense of \cite{GL12:articolo} if we allow $\T$ to be a small-exact $\mathbb V$-category for an appropriate universe enlargement $\mathbb V$ of $\V$. This would in fact be enough for our results. 
	
	\noindent The universe enlargement can be chosen as follows. Since $\V_0\simeq\tx{Lex}(\V\op_{0f},\bo{Set})$, where $\V_{0f}$ is the full subcategory of $\V_0$ spanned by its finitely presentable objects, then one can define $\mathbb V_0:=\tx{Lex}(\V\op_{0f},\bo{SET})$ for a larger universe of sets $\bo{SET}$ for which $\E$ is $\bo{SET}$-small (the monoidal structure on $\mathbb V$ is induced by that of $\V$). Then $\mathbb V$-finite limits are the same as $\V$-finite limits (since $\V_f\simeq \mathbb V_f$ by construction), Proposition~\ref{V-topos} still applies in this larger setting, and the $\mathbb V$-small version of \cite[Theorem~4.1]{GL12:articolo} provides an embedding of $\E$ into a small-exact $\mathbb V$-category $\T$. 
\end{obs}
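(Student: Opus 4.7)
The remark makes the meta-claim that Definition~\ref{exact} is, up to a universe enlargement, no more restrictive than the Garner--Lack notion of $\Phi$-exactness; it remains to justify the existence and good properties of the enlargement $\V \rightsquigarrow \mathbb V$ asserted at the end, and to explain why the required embedding into a small-exact category is always available. My plan is to construct $\mathbb V$ explicitly, check that the passage from $\V$ to $\mathbb V$ changes neither the class of finite weights nor the class $\Phi$, and then apply \cite[Theorem~4.1]{GL12:articolo} internally to $\mathbb V$.

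The first step is to pin down the construction. By the standing assumption that $\V_0$ is lfp as a closed category \cite{Kel82:articolo}, the full subcategory $\V_{0f}$ of finitely presentable objects is a small symmetric monoidal subcategory of $\V_0$ containing the unit, and there is a canonical equivalence $\V_0 \simeq \tx{Lex}(\V^{\op}_{0f}, \bo{Set})$; the monoidal structure on the right-hand side is Day convolution of the tensor product of $\V_{0f}$. Choosing a Grothendieck universe $\bo{SET}$ large enough that the given $\Phi$-exact category $\E$ is $\bo{SET}$-small, one sets $\mathbb V_0 := \tx{Lex}(\V^{\op}_{0f}, \bo{SET})$ with the Day convolution monoidal structure. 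I would then check that $\mathbb V_0$ is again lfp as a closed category and that its subcategory of finitely presentable objects is canonically equivalent, via the inclusion, to $\V_{0f}$; this is a direct consequence of the standard lfp recognition theorem, since the dense generator $\V_{0f}$ is unchanged by the universe enlargement.

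The second step is to verify the two transfer claims. That $\V$-finite weighted limits coincide with $\mathbb V$-finite weighted limits follows immediately from $\V_f \simeq \mathbb V_f$: a finite weight is by definition one indexed by a finite set of objects with homs and values in $\V_{0f} = \mathbb V_{0f}$, so the notions of finite weight, of lex $\V$-category, of lex $\V$-functor, and of the class $\Phi$ of lex weights are preserved under the enlargement. Hence a $\Phi$-exact $\V$-category in the sense of Garner--Lack is in particular a $\Phi$-exact $\mathbb V$-category, and since it is $\bo{SET}$-small we may invoke the $\mathbb V$-small version of \cite[Theorem~4.1]{GL12:articolo}, which is a formal statement that goes through verbatim with base $\mathbb V$, to produce an embedding of $\E$ into a small-exact $\mathbb V$-category $\T$ that is closed under finite limits and $\Phi$-lex colimits. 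This is exactly the embedding required in Definition~\ref{exact}. Moreover, Proposition~\ref{V-topos} was proved for an arbitrary lfp closed base, so its $\mathbb V$-version holds and can be used in the proof of Theorem~\ref{main} with $\mathbb V$ in place of $\V$.

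The subtlety I expect to warrant the most care is ensuring that the $\Phi$-lex colimits computed in $\E$, and more generally the notions of ``lex'' and ``$\Phi$-exact'' functor, do not depend on whether $\E$ is viewed as $\V$-enriched or as $\mathbb V$-enriched. Both issues reduce to the identity $\V_f \simeq \mathbb V_f$, together with the observation that the domains of the weights in $\Phi$ are small in both universes, so neither new weights nor new lex structure can appear in the enlarged setting. Once this is granted, Definition~\ref{exact} is only a smallness condition, and no generality is lost in the proof of Theorem~\ref{main} by adopting it rather than the Garner--Lack formulation.
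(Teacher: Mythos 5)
Your argument is correct and follows essentially the same route as the paper's own remark: construct $\mathbb V_0:=\tx{Lex}(\V\op_{0f},\bo{SET})$ with the induced (Day convolution) monoidal structure, observe that $\V_{0f}\simeq\mathbb V_{0f}$ so that finite weights, lex structure, and the class $\Phi$ are unchanged, and then invoke the $\mathbb V$-small version of \cite[Theorem~4.1]{GL12:articolo} together with the base-independence of Proposition~\ref{V-topos}. Your elaborations (lfp recognition via the unchanged dense generator, and the explicit check that $\Phi$-lex colimits are insensitive to the enlargement) are accurate fillings-in of details the paper leaves implicit.
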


\begin{obs}
	In \cite{GL12:articolo} a class of lex weights $\Phi$ is called {\em small} if $\Phi_l\E$ is small for any small and lex $\E$. Without considering universe enlargements, we shall see that when $\Phi$ is small (as it is in many of our examples), the main theorem of this section applies not only to our notion of $\Phi$-exact $\V$-category, but also to that of \cite{GL12:articolo}. See Theorem~\ref{main-small}.
\end{obs}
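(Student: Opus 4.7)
The plan is to reduce Theorem~\ref{main-small} to Theorem~\ref{main} by showing that, when $\Phi$ is small, every $\V$-category which is $\Phi$-exact in the sense of \cite{GL12:articolo} is automatically $\Phi$-exact in the sense of Definition~\ref{exact}. Once this inclusion of notions is established, the universal property in Theorem~\ref{main} is tested against the same class of codomain categories under either definition, and the three-way equivalence carries over verbatim. The whole point is that, in contrast with the universe enlargement discussed in Remark~\ref{small}, the smallness of $\Phi$ should supply the required embedding into a small-exact $\V$-category \emph{within} the base $\V$.

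First, I would reduce to the case where the GL-$\Phi$-exact category $\E$ is small. This is legitimate because the data involved in the universal property $\tx{Flat}(\C,\E)\simeq\Phi\tx{-Ex}(\Phi_l\C,\E)$ is small on both sides: $\C$ is small by assumption and $\Phi_l\C$ is small by the smallness of $\Phi$ (this is where the hypothesis enters). Consequently one may replace $\E$ by a small lex full subcategory containing the image of any given flat functor and closed under the finite limits and $\Phi$-lex colimits coming from $\Phi_l\C$, which by GL-$\Phi$-exactness of $\E$ are reflected from $\Phi_l\E$ via the lex left adjoint $L$.

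Next, for small $\E$, the desired embedding would be the composite
\[
\E\xhookrightarrow{\ J\ }\Phi_l\E\xhookrightarrow{\ Y\ }\P\E,
\]
where $J$ is the inclusion (right adjoint to the lex reflector $L$ granted by GL-$\Phi$-exactness) and $Y$ is Yoneda. The target $\P\E$ is small-exact because $\E$ is small and lex, by the identity localization. Both $J$ and $Y$ are fully faithful and preserve finite limits (as right adjoints, in the case of $J$), so $\E$ sits inside $\P\E$ as a full, lex-closed subcategory.

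The main obstacle I anticipate lies in the remaining condition of Definition~\ref{exact}, namely the closure of $\E$ under $\Phi$-lex colimits inside $\P\E$: a colimit $M\ast H$ with $M\in\Phi$ and $H\colon\D\to\E$ lex, computed inside $\E$ as $L$ applied to the analogous colimit in $\Phi_l\E$, must be identified with the same colimit as computed in $\P\E$. My plan to handle this is to exploit that $\Phi_l\E$ sits between $\E$ and $\P\E$ in a compatible way: $\Phi$-lex colimits in $\Phi_l\E$ are built by construction out of representables, $L$ is cocontinuous (being a left adjoint), and $Y$ is the free cocompletion of $\Phi_l\E$ under all colimits; combining these with Propositions~\ref{flat-prop} and \ref{V-topos} applied to $J$ should give the required comparison. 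Once closure is verified, Definition~\ref{exact} holds for $\E$, so Theorem~\ref{main} applies and yields the statement of Theorem~\ref{main-small}.
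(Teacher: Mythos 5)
Your overall architecture---use the smallness of $\Phi$ to cut down to a small full subcategory closed under finite limits and $\Phi$-lex colimits, observe that for small categories the two notions of $\Phi$-exactness coincide, and transfer the universal property back along the inclusion---is essentially the paper's strategy for Theorem~\ref{main-small}: there one factors each flat $F\colon\C\to\E$ as $F=J\circ F'$ through a small $\F\hookrightarrow\E$ closed under finite limits and $\Phi$-lex colimits, applies Theorem~\ref{main} to $\F$, and checks that $J\circ\tx{Lan}_KF'\cong\tx{Lan}_KF$. So the first half of your plan is sound, modulo two points of care: the small subcategory must be closed under \emph{all} finite limits and $\Phi$-lex colimits of $\E$ (not only those coming from $\Phi_l\C$), and one still has to run the density and precomposition arguments from $(3)\Rightarrow(2)$ of Theorem~\ref{main} to see that restriction along $K$ lands in flat functors and that every $\Phi$-exact $G\colon\Phi_l\C\to\E$ is $\tx{Lan}_K(GK)$.

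The genuine gap is in your treatment of the small case. You propose to exhibit a small GL-$\Phi$-exact $\E$ as $\Phi$-exact in the sense of Definition~\ref{exact} via the composite $\E\hookrightarrow\Phi_l\E\hookrightarrow\P\E$, i.e.\ via the Yoneda embedding, and you correctly flag closure under $\Phi$-lex colimits as the obstacle---but the fix you sketch cannot work. For $M\in\Phi$ and $H\colon\D\to\E$ lex, the colimit $M*YH$ computed in $\P\E$ lies in $\Phi_l\E$ but is in general \emph{not} representable; GL-$\Phi$-exactness only provides a lex reflector $L\colon\Phi_l\E\to\E$, and the $\Phi$-lex colimit in $\E$ is $L(M*YH)$, which agrees with $M*YH$ only when the unit at that object is invertible. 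Cocontinuity of $L$ tells you what the reflected colimit is; it does not force the presheaf-level colimit to land in $\E$. Concretely, take $\Phi=\Phi_{\tx{reg}}$ and $\E$ a small regular category: coequalizers of kernel pairs are almost never computed as in $[\E\op,\V]$, so the representables are not closed in $\P\E$ under $\Phi_{\tx{reg}}$-lex colimits. The embedding required by Definition~\ref{exact} is not into $\P\E$ itself but into a left exact localization of it (sheaves for a topology adapted to $\Phi$); this is exactly what \cite[Theorem~4.1]{GL12:articolo} supplies for small $\E$, and it is what the paper cites at this step rather than reproving. With that substitution your argument closes.
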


For any small and lex $\V$-category $\C$ the $\V$-category $\Phi_l\C$ is $\Phi$-exact and turns out to be the free $\Phi$-exact completion of $\C$ in the sense of \cite[Proposition~3.8]{GL12:articolo}.

We want to generalize this free completion to the setting where $\C$ is not lex, but satisfies some {\em weaker} finite completeness conditions. First note that the definition of $\Phi_l\C$, as the closure of representable in $[\C\op,\V]$ under finite limits and $\Phi$-lex colimits, makes sense for any small $\V$-category $\C$; and in that case $\Phi_l\C$ is still $\Phi$-exact by construction.

\begin{Def}
	Let $M\colon\C\op\to \V$ be a weight. We say that a lex $\V$-category $\E$ has {\em $M$-wlex} colimits if it has all weighted colimits of the form $M*H$ where $H\colon\C\to\E$ is flat.
\end{Def}

\begin{obs}
	Following the notation of \cite{GL12:articolo}, it would have been natural to call these {\em $M$-flat} colimits, rather than $M$-wlex. However, that notation is already used in the literature to denote those colimits that commute with $M$-weighted limits in $\V$; for instance, flat presheaves coincide with the weights which are $M$-flat with respect to any finite weight $M$. This would have resulted in a confusing notation; therefore we opted for something that recalls our principal application to the case of {\em weakly lex} categories (Section~\ref{weaklylex}).
\end{obs}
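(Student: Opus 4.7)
The final statement is a remark, not a theorem, and its content is primarily terminological: it explains why the authors chose the name ``$M$-wlex'' in the preceding definition instead of the arguably more natural ``$M$-flat.'' My proposal therefore consists of verifying the sole mathematical assertion embedded in the remark --- that flat presheaves in the sense of Section~\ref{flatness} coincide with the weights $F\colon\C\op\to\V$ which are $M$-flat, in the established literature sense of commuting with $M$-weighted limits in $\V$, for every finite weight $M$ --- and then of confirming that this agreement is the genuine source of the potential notational clash. The rest of the remark is an exposition of a naming decision and requires no argument.

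For the embedded equivalence I would simply unpack Kelly's definition, already recorded in Section~\ref{flatness}: $F$ is flat iff the $\V$-functor $F*-\colon[\C\op,\V]\to\V$ is lex, i.e.\ preserves all finite weighted limits. By the standard calculus of weighted limits every finite limit in $\V$ is of the form $\{M,D\}$ for some finite weight $M\colon\D\to\V$ and diagram $D\colon\D\to\V$, so preservation by $F*-$ amounts to the comparison map $F*\{M,D\}\to\{M,F*D\}$ being invertible for every such $M$ and $D$. Quantified over all finite $M$, this is exactly the statement that $F$ is $M$-flat for each finite $M$. The equivalence is thus immediate from the definitions and needs no further calculation.

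For the terminological justification I would then observe the resulting clash: naming the colimits of Definition\ preceding the remark ``$M$-flat colimits'' would overload ``$M$-flat,'' whose established meaning is precisely the commutation-with-$M$-limits property that, by the equivalence just recorded, characterises flatness when ranged over all finite $M$. Adopting ``$M$-wlex'' avoids the ambiguity and, as the remark points out, signposts the principal application to weakly-lex categories in Section~\ref{weaklylex}. I anticipate no substantive obstacle; the entire content of the remark is either an immediate consequence of the definition of flatness recalled earlier or a matter of exposition.
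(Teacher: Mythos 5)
Your reading is correct: the remark has no proof in the paper, and its only mathematical content --- that flat presheaves are exactly the weights which are $M$-flat for every finite weight $M$ --- is immediate from Kelly's definition of flatness recalled at the start of Section~2 (``$F$-weighted colimits commute in the base $\V$ with finite limits''), which is precisely the unpacking you give. Your proposal matches the paper's (implicit) justification essentially verbatim.
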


Recall that the {\em saturation} $\Phi^*$ of $\Phi$, in the lex setting, is defined by stating that a weight $M\colon\C\op\to\V$ lies in $\Phi^*$ if and only if $\C$ is lex and $M\in\Phi_l\C$. Equivalently, $M\in\Phi^*$ if and only if $\C$ is lex and $M$-lex colimits exists in any $\Phi$-exact $\V$-category and are preserved by any $\Phi$-exact $\V$-functor (this follows from \cite[3.4 and 3.5]{GL12:articolo}). We shall now define a saturation of $\Phi$ in the flat-world.

\begin{Def}
	We say that a weight $M\colon\C\op\to\V$ belongs to $\Phi^\diamond$ if every $\Phi$-exact $\V$-category admits $M$-wlex colimits and every $\Phi$-exact $\V$-functor preserves them.
	Given a small $\V$-category $\C$, we denote by $\Phi^\diamond[\C]$ the full subcategory of $[\C\op,\V]$ spanned by the weights which lie in $\Phi^\diamond$.
\end{Def}

When we say that every $\Phi$-exact $\V$-functor $F\colon\E\to\F$ preserves $M$-wlex colimits, we ask just that, for any flat $H\colon\C\to\E$, the colimit $M*FH$ exists and the comparison map into $F(M*H)$ is an isomorphism. We do not ask the $\V$-functor $FH$ to be flat. However, we will see that under some hypotheses flatness is preserved under composition with $\Phi$-exact $\V$-functors (Lemma~\ref{flat-pres}).

If $\C$ is lex and $M\colon\C\op\to\V$ is a weight, then $M$-wlex colimits coincide with $M$-lex colimits since flat $\V$-functors out of $\C$ are the same as lex $\V$-functors; thus $M\in\Phi^*$ if and only if $M\in\Phi^\diamond$. It follows that we have an inclusion $\Phi^*\subseteq\Phi^\diamond$ given pointwise by the equality
$$ \Phi_l\C=\Phi^\diamond[\C] $$
whenever $\C$ is lex. In general, only one of the inclusions holds:

\begin{lema}
	For any small $\V$-category $\C$ the following inclusions hold
	$$\C\subseteq \Phi^\diamond[\C]\subseteq\Phi_l\C $$
	as full subcategories of $[\C\op,\V]$.
\end{lema}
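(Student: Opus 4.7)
The plan is to handle the two inclusions separately, with Proposition~\ref{flatchar}(3) as the central tool for the second.

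For the first inclusion $\C\subseteq\Phi^\diamond[\C]$, I would fix $C\in\C$ and check that the representable $YC=\C(-,C)$, viewed as a weight $\C\op\to\V$, lies in $\Phi^\diamond$. By Yoneda, for any $\V$-functor $H\colon\C\to\E$ the colimit $YC*H\cong HC$ exists and is absolute, hence preserved by every $\V$-functor. So in particular, given any $\Phi$-exact $\E$ and any flat $H\colon\C\to\E$, the colimit $YC*H$ exists; and for any $\Phi$-exact $F\colon\E\to\F$ it is preserved. Thus $YC\in\Phi^\diamond$.

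For the second inclusion $\Phi^\diamond[\C]\subseteq\Phi_l\C$, given $M\in\Phi^\diamond$ my strategy is to realize $M$ itself as an $M$-wlex colimit computed inside $\Phi_l\C$, and then to transport the identification across the inclusion $j\colon\Phi_l\C\hookrightarrow[\C\op,\V]$ using the co-Yoneda lemma. First, $\Phi_l\C$ is $\Phi$-exact by construction, being the closure of the representables in $[\C\op,\V]$ under finite limits and $\Phi$-lex colimits (working inside a suitable universe enlargement as in Remark~\ref{small} if $\C$ is not lex). Second, the restricted Yoneda $Z\colon\C\hookrightarrow\Phi_l\C$ is fully faithful and dense: density follows because $j$ is fully faithful and $Y=jZ$ is dense in $[\C\op,\V]$. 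Since $\Phi_l\C$ is lex, Proposition~\ref{flatchar}(3) gives that $Z$ is flat.

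With these ingredients, the defining property of $M\in\Phi^\diamond$ applied to the $\Phi$-exact category $\Phi_l\C$ and the flat $Z$ guarantees that the colimit $M*Z$ exists in $\Phi_l\C$. The inclusion $j$ is itself a $\Phi$-exact $\V$-functor: it preserves finite limits (as $\Phi_l\C$ is closed under them in $[\C\op,\V]$) and $\Phi$-lex colimits (by the defining closure property of $\Phi_l\C$). Applying the defining property of $M\in\Phi^\diamond$ now to $j$, we obtain
$$ j(M*Z)\cong M*(jZ)=M*Y\cong M, $$
so that $M\in\Phi_l\C$, as desired.

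The main obstacle I foresee is a smallness/size subtlety in step (a): for non-lex $\C$, the ambient $[\C\op,\V]$ need not be small-exact in the strict sense of Definition~\ref{exact}, so one has to appeal to the universe enlargement of Remark~\ref{small} (embedding via $\B=\tx{Fin}^\dagger\C$) to legitimately view $\Phi_l\C$ as $\Phi$-exact. Once that is granted, everything else is a clean application of flatness of $Z$ and $\Phi$-exactness of $j$.
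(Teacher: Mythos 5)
Your proof is correct and follows essentially the same route as the paper: representables give absolute colimits for the first inclusion, and for the second one applies flatness of the dense inclusion $\C\hookrightarrow\Phi_l\C$ (via Proposition~\ref{flatchar}(3)) together with $\Phi$-exactness of $\Phi_l\C\hookrightarrow[\C\op,\V]$ and co-Yoneda to identify $M$ with an object of $\Phi_l\C$. The size worry you raise is harmless but not needed here: since $\C$ is small, $[\C\op,\V]$ is a left exact localization of $\P(\tx{Fin}^\dagger\C)$ and hence small-exact, so $\Phi_l\C$ is $\Phi$-exact in the sense of Definition~\ref{exact} without any universe enlargement.
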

\begin{proof}
	For any $H\colon\C\to\E$ we have $\C(-,C)*H\cong HC$; thus $\C(-,C)$-weighted colimits exist and are preserved by any $\V$-functor (that is, they are absolute). As a consequence $\C(-,C)\in\Phi^\diamond$ and  $\C\subseteq \Phi^\diamond[\C]$.
	
	Now note that the inclusion $K\colon\C\hookrightarrow\Phi_l\C$ is flat by Proposition~\ref{flatchar} since $K$ is dense. 
	Consider then $M\in\Phi^\diamond[\C]$; by definition the colimit $M*K$ exists in $\Phi_l\C$ and is preserved by the inclusion $V\colon\Phi_l\C\hookrightarrow[\C\op,\V]$ (being $\Phi$-exact). This means that 
	$$V(M*K)\cong M*VK\cong M*Y\cong M,$$ 
	where $Y\colon\C\to[\C\op,\V]$ is the Yoneda embedding. Thus the inclusion of $\Phi^\diamond[\C]$ into $[\C\op,\V]$ factors through $V$.
\end{proof}

\begin{lema}\label{flat-pres}
	Suppose that $\Phi^\diamond[\C]$ has finite limits of diagrams in $\C$. Then:\begin{enumerate}\setlength\itemsep{0.25em}
		\item $\tx{Lan}_ZH\colon\C_l\to\E$ exists for any $H\colon\C\to\E$ into a $\Phi$-exact $\E$;
		\item if $F\colon\E\to\F$ is a $\Phi$-exact $\V$-functor between $\Phi$-exact $\V$-categories and $H\colon\C\to\E$ is flat, then also $FH$ is flat.
	\end{enumerate}
\end{lema}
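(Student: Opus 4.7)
My plan is to handle (2) first, as a direct application of the defining property of $\Phi^\diamond$, and then tackle (1) by embedding $\E$ into a cocomplete ambient small-exact category and controlling the restriction of the resulting Kan extension to $\C_l$.

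For (2): set $M := \{N, YK\}$ for a finite weight $N\colon\D\to\V$ and finite diagram $K\colon\D\to\C$. The hypothesis places $M \in \C_l$ inside $\Phi^\diamond[\C]$, so in particular $M \in \Phi^\diamond$. Since $H$ is flat, the colimit $M * H$ exists in $\E$ and flatness gives $M * H \cong \{N, HK\}$. The $\Phi$-exactness of $F$ means $F$ preserves finite limits and $M$-wlex colimits, so
\[
 M * FH \;\cong\; F(M * H) \;\cong\; F(\{N, HK\}) \;\cong\; \{N, FHK\},
\]
which is precisely flatness of $FH$ at $(N,K)$; as $(N,K)$ is arbitrary, $FH$ is flat.

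For (1): I would embed $\E$ into a small-exact $\V$-category $\T$ via Definition~\ref{exact}. Since $\T$ is cocomplete, the left Kan extension $\tx{Lan}_Y H \colon [\C\op, \V] \to \T$ exists when $H$ is viewed as a $\V$-functor into $\T$, and its restriction to $\C_l$ provides a candidate for $\tx{Lan}_Z H \colon \C_l \to \T$. On representables this is exactly $H$, which lands in $\E$; the task is to show that for a general $X = \{N, YK\} \in \C_l$ the value $X * H = \tx{Lan}_Y H(X)$ in $\T$ actually lies in $\E \subseteq \T$, using that $\E$ is closed in $\T$ under finite limits and $\Phi$-lex colimits.

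The main obstacle is that $H$ is not assumed flat, so the defining property of $\Phi^\diamond$ does not directly yield $X * H \in \E$ (it only handles flat diagrams). To get around this, I would use the hypothesis $X \in \Phi^\diamond[\C]$ together with a presentation witnessing this membership: $X$ is reached from representables by iterated $\Phi^\diamond$-wlex colimits built on flat diagrams, all taking place inside $\Phi_l\C$. Applying the cocontinuous $\tx{Lan}_Y H$ to such a presentation expresses $X * H$ as an analogous colimit of the objects $HC \in \E$, each intermediate stage being either a finite limit or a $\Phi$-lex colimit of things already in $\E$; the $\Phi$-exactness of $\E \hookrightarrow \T$ then forces the final result to remain in $\E$. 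Organizing this inductive construction so that the inner diagrams stay flat at each stage ---and so that the produced functor really agrees with the pointwise $\tx{Lan}_Z H$--- is the technical heart of the argument.
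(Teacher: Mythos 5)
Your proof of (2) is correct and is essentially the paper's argument in pointwise form: the paper packages the same computation as the isomorphism $F\circ\tx{Lan}_ZH\cong\tx{Lan}_Z(FH)$ and then invokes Proposition~\ref{flat-prop}, but the substance --- $\C_l\subseteq\Phi^\diamond[\C]$, flatness of $H$ giving $M*H\cong\{N,HK\}$, and preservation of $M$-wlex colimits by the $\Phi$-exact $F$ --- is identical.

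Your plan for (1), however, has a genuine gap. You propose to use ``a presentation witnessing membership'' of $X\in\Phi^\diamond[\C]$ as an iterated colimit built from representables along flat diagrams. No such presentation is supplied by the definition: $\Phi^\diamond$ is defined purely by an existence-and-preservation \emph{property} (every $\Phi$-exact category admits $M$-wlex colimits and every $\Phi$-exact functor preserves them), whereas the iterated-closure description you have in mind is the definition of $\Phi_l\C$, not of $\Phi^\diamond[\C]$. Even if you fall back on the closure description of $\Phi_l\C$, the construction alternates $\Phi$-lex colimits with \emph{finite limits}, and $\tx{Lan}_YH$ is cocontinuous but not lex when $H$ is not flat; indeed $X=\{N,YK\}$ is itself already a finite limit of representables, so the very first stage of your induction is not preserved and the scheme collapses. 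The paper's route is much more direct and avoids the ambient $\T$ entirely: the colimits $M\cong M*Z$ for $M\in\C_l$ form a density presentation of $Z\colon\C\hookrightarrow\C_l$, so $\tx{Lan}_ZH$ exists precisely when each $M*H$ exists in $\E$, and since $\C_l\subseteq\Phi^\diamond[\C]$ these colimits exist by the very definition of $\Phi^\diamond$ --- for \emph{flat} $H$. Note that this is also all the paper's own proof establishes (and all that is used later, e.g.\ in Lemma~\ref{diamond-lex} and Theorem~\ref{main}); the obstacle you correctly identified for arbitrary non-flat $H$ is real, and you should not try to overcome it by the presentation argument above.
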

\begin{proof}
	The fact that $\Phi^\diamond[\C]$ has finite limits of diagrams in $\C$ implies that $\C_l\subseteq \Phi^\diamond[\C]$ as full subcategories of $[\C\op,\V]$. Moreover, using that the colimits $M\cong M*Z$, for $M\in\C_l$, give a density presentation for $Z\colon\C\hookrightarrow\C_l$, we know that the left Kan extension of a $\V$-functor $H\colon\C\to\E$ along $Z$ exists if and only if $M*H$ exists in $\E$ for any $M\in\C_l$, and in that case $\tx{Lan}_ZH(M)\cong M*H$.
	 
	Now, since $\C_l\subseteq \Phi^\diamond[\C]$, if $H\colon\C\to\E$ is flat the colimits $M*H$, for $M\in\C_l$, always exist in $\E$ and are preserved by any $\Phi$-exact $\V$-functor $F$. Then (1) follows at once by the argument above.
	
	For (2), given $H\colon\C\to\E$ flat, we know that $\tx{Lan}_ZH$ exists and preserves finite limits of objects from $\C$ (Proposition~\ref{flat-prop}). Moreover, since $F$ preserves the colimits $M*H$ for $M\in\C_l$, it follows that the composite $F\circ \tx{Lan}_ZH$ sends $M$ to the colimit $M*FH$. Thus
	$$ F\circ \tx{Lan}_ZH\cong\tx{Lan}_Z(FH).$$ 
	Since $F$ is lex, $\tx{Lan}_Z(FH)$ preserves finite limits of objects of $\C$; hence $FH$ is flat. 
\end{proof}

Next, we introduce the concept of free $\Phi$-exact completion in the flat setting. When $\C$ is lex, since flat and lex $\V$-functors out of $\C$ are equivalent notions, we recover the free $\Phi$-exact completion of \cite{GL12:articolo}.

\begin{Def}
	Let $K\colon\C\hookrightarrow \D$ be a fully faithful $\V$-functor between a small $\V$-category $\C$ and a $\Phi$-exact $\V$-category $\D$. We say that $K$ exhibits $\D$ as the {\em free $\Phi$-exact completion of $\C$} if left Kan extending along $K$ induces an equivalence
	$$ \tx{Flat}(\C,\E) \simeq\Phi\tx{-Ex}(\D,\E) $$
	for any $\Phi$-exact $\V$-category $\E$.
\end{Def}

When the free $\Phi$-exact completion exists, the inverse of the equivalence above is necessarily given by restricting along $K$. Note however that such a free $\Phi$-exact completion need not exist in general, as the following remark shows.

\begin{obs}\label{notexists}
	Let $\Phi=\emptyset$ and $\C$ be a small Cauchy complete $\V$-category that does not have finite limits; then the free $\emptyset$-exact $\V$-category over $\C$ does not exist. Indeed, by taking $\E=\V$ in the universal property, we would obtain an equivalence
	$$ \tx{Flat}(\C,\V) \simeq\tx{Lex}(\D,\V)$$
	implying that $\tx{Flat}(\C,\V)$ is locally finitely presentable. Since $\C$ is Cauchy complete, we would then obtain that $\C\simeq\D$ is lex, leading to a contradiction. Conversely, if $\C$ is lex then the free $\emptyset$-exact $\V$-category over $\C$ is $\C$ itself.
\end{obs}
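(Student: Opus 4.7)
The plan is to specialise the universal property to $\E=\V$ and extract a contradiction from the classical characterisation of small lex $\V$-categories via local finite presentability of their flat presheaf categories.

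First I would observe that, when $\Phi=\emptyset$, the notion of $\Phi$-exact $\V$-category of Definition~\ref{exact} reduces to that of a (small) lex $\V$-category: any lex $\E$ embeds into $\P\E$ as a full subcategory closed under finite limits, and $\P\E$ is small-exact via the identity. Dually, $\emptyset$-exact $\V$-functors are exactly lex $\V$-functors. In particular $\V$ itself is $\emptyset$-exact, so it is a legitimate choice of test category in the universal property.

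Next, I would suppose for contradiction that some $K\colon\C\hookrightarrow\D$ exhibits $\D$ as the free $\emptyset$-exact completion of $\C$. Specialising the universal property to $\E=\V$ yields an equivalence
$$ \tx{Flat}(\C,\V)\simeq \tx{Lex}(\D,\V). $$
Because $\D$ is small and lex, the right-hand side is locally finitely presentable by the standard enriched Gabriel--Ulmer-type characterisation; hence $\tx{Flat}(\C,\V)$ is \lfp{} as well. I would then invoke the classical fact that, for a small Cauchy complete $\V$-category $\C$, the category $\tx{Flat}(\C,\V)$ is \lfp{} precisely when $\C$ is lex: under Cauchy completeness the Yoneda embedding identifies $\C\op$ with the finitely presentable objects of $\tx{Flat}(\C,\V)$, and these must be closed under finite colimits in $\tx{Flat}(\C,\V)$, which translates back to $\C$ being closed under finite limits. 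This forces $\C$ to be lex, contradicting the standing hypothesis, so no such $\D$ exists.

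Finally, for the converse, if $\C$ is already lex then $\C$ itself is $\emptyset$-exact and the identity $\tx{id}\colon\C\to\C$ exhibits the free completion: by Proposition~\ref{flatchar}(2), flat $\V$-functors from $\C$ to any lex $\E$ coincide with lex $\V$-functors, and left Kan extension along the identity is the identity, so the universal property holds trivially. The main obstacle in the argument is the step linking \lfp-ness of $\tx{Flat}(\C,\V)$ to $\C$ being lex under Cauchy completeness; this is standard in the enriched setting but needs care in the identification of the finitely presentable objects of $\tx{Flat}(\C,\V)$.
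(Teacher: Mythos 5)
Your argument is correct and follows essentially the same route as the paper's: specialise the universal property to $\E=\V$, deduce that $\tx{Flat}(\C,\V)$ is locally finitely presentable, and use Cauchy completeness together with the Gabriel--Ulmer identification of the finitely presentable objects with $\C\op$ to force $\C$ to be lex, a contradiction. Your closure-under-finite-colimits phrasing of the last step is just an unwinding of the paper's statement that $\C\simeq\D$ is lex, and the converse direction via Proposition~\ref{flatchar}(2) matches the paper as well.
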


If a free $\Phi$-exact completion of $\C$ exists, then it is unique up to equivalence and must coincide with $\Phi_l\C$:

\begin{lema}\label{unique-free}
	Let $K\colon\C\hookrightarrow \D$ exhibit $\D$ as the free $\Phi$-exact completion of $\C$, then there exists an equivalence $E\colon\D\to\Phi_l\C$ for which $E\circ K$ is isomorphic to the inclusion $\C\hookrightarrow\Phi_l\C$.
\end{lema}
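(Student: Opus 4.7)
The plan is to invoke the universal property of $\D$ to build $E$, and then prove it is an equivalence by constructing a $\Phi$-exact inverse $F\colon\Phi_l\C\to\D$ from the explicit closure description of $\Phi_l\C$.

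By construction $\Phi_l\C$ is $\Phi$-exact, and the inclusion $J\colon\C\hookrightarrow\Phi_l\C$ is fully faithful and dense, with density inherited from that of the Yoneda embedding $Y\colon\C\to[\C\op,\V]$ since $\Phi_l\C$ is a full subcategory of $[\C\op,\V]$ containing all representables. Thus $J$ is flat by Proposition~\ref{flatchar}(3), and the universal property of $\D$ applied with $\E=\Phi_l\C$ produces a $\Phi$-exact $\V$-functor $E\colon\D\to\Phi_l\C$ with $E\circ K\cong J$, unique up to isomorphism. In the same vein, $K\colon\C\to\D$ is itself flat: the equivalence $\tx{Flat}(\C,\D)\simeq\Phi\tx{-Ex}(\D,\D)$ sends $\tx{id}_\D$ back to its restriction $K$, forcing $K$ to lie in $\tx{Flat}(\C,\D)$.

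To construct an inverse, I would build $F\colon\Phi_l\C\to\D$ by recursion on the presentation of $\Phi_l\C$ as the closure of the representables in $[\C\op,\V]$ under finite limits and $\Phi$-lex colimits. At the base $F\circ J\cong K$; at each stage $F$ is extended over new finite limits via the finite limits of $\D$, and over new $\Phi$-lex colimits $M*H$ (with $H\colon\B\to\Phi_l\C$ lex) via the colimit $M*(F\circ H)$ in $\D$, which exists because $F\circ H\colon\B\to\D$ is lex by the inductive hypothesis and $\D$ is $\Phi$-exact. The upshot is a $\Phi$-exact $\V$-functor $F\colon\Phi_l\C\to\D$ with $F\circ J\cong K$.

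By the uniqueness clause of the universal property, $F\circ E\cong\tx{id}_\D$, since $F\circ E$ is a $\Phi$-exact endofunctor of $\D$ whose restriction to $\C$ is isomorphic to $K$. For the other composite, $E\circ F$ and $\tx{id}_{\Phi_l\C}$ are $\Phi$-exact endofunctors of $\Phi_l\C$ both restricting to $J$ on representables; since $\Phi_l\C$ is generated from its representables under finite limits and $\Phi$-lex colimits (which both endofunctors preserve), an induction along the same closure gives $E\circ F\cong\tx{id}_{\Phi_l\C}$. Hence $E$ is an equivalence with $E\circ K\cong J$ as required. The principal obstacle is the coherent construction of $F$ in the middle step: because $\Phi_l\C$ is built by iterated closure, verifying that the recursive definition is independent of the chosen presentation and assembles into a well-defined $\Phi$-exact $\V$-functor requires the same kind of bookkeeping used in \cite[Section~3]{GL12:articolo}; the rest is a direct application of the universal property.
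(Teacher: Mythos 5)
Your first half is sound and matches the paper: $J\colon\C\hookrightarrow\Phi_l\C$ is flat by Proposition~\ref{flatchar}(3), so the universal property of $\D$ yields a $\Phi$-exact $E:=\tx{Lan}_KJ\colon\D\to\Phi_l\C$ with $E\circ K\cong J$, and $K$ itself is flat because $1_\D$ corresponds to it under the equivalence. The gap is in your construction of the inverse $F\colon\Phi_l\C\to\D$ by ``recursion on the presentation'' of $\Phi_l\C$ as an iterated closure. This is not a well-defined procedure: an object of $\Phi_l\C$ admits many presentations as a finite limit or as a $\Phi$-lex colimit of earlier-stage objects, there is no canonical choice among them, and even after making choices you must define $F$ on \emph{morphisms} between objects presented in different ways and verify functoriality, coherence of the choices, and that the extension at stage $\alpha+1$ still preserves the limits and colimits used at earlier stages. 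None of this is supplied by the bookkeeping in \cite[Section~3]{GL12:articolo}, where the functor out of $\Phi_l\C$ is obtained as a restriction of a left Kan extension along Yoneda, never by recursion on the closure. Worse, making such a recursion work (and likewise your closing ``induction'' showing $E\circ F\cong 1_{\Phi_l\C}$, which tacitly assumes that $\Phi$-exact endofunctors of $\Phi_l\C$ agreeing on $\C$ are isomorphic) amounts to establishing the universal property of $\Phi_l\C$ itself --- but that is condition (2) of Theorem~\ref{main}, which is \emph{deduced from} this lemma, so the argument would be circular in context.

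The paper avoids constructing anything out of $\Phi_l\C$ by hand. From $\tx{Lan}_KK\cong 1_\D$ it deduces that $K$ is dense, so $\D$ embeds fully faithfully into $[\C\op,\V]$ via $W\cong\tx{Lan}_KY$, which is $\Phi$-exact because $Y$ is flat. Hence $\D$ is (identified with) a full subcategory of $[\C\op,\V]$ containing the representables and closed under finite limits and $\Phi$-lex colimits, and minimality of $\Phi_l\C$ gives $\Phi_l\C\subseteq\D$. The would-be inverse to $E$ is then simply this full inclusion --- no recursion needed --- and the uniqueness clause of the universal property of $\D$ (applied to the $\Phi$-exact endofunctor ``inclusion $\circ\, E$'', which restricts to $K$ on $\C$) shows the inclusion is essentially surjective, hence an equivalence. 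If you want to repair your proof, replace the recursive construction of $F$ with this density-plus-minimality argument.
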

\begin{proof}
	Since the identity $1_\D\colon\D\to\D$ is $\Phi$-exact, by the universal property we know that $K$ is flat and  $\tx{Lan}_K K\cong 1_\D$. Therefore $K$ is a dense $\V$-functor and hence we have an inclusion $W\colon\D\hookrightarrow[\C\op,\V]$ with $W\cong\tx{Lan}_KY$, where $Y\colon\C\to[\C\op,\V]$ is the Yoneda embedding. Again by the universal property, since $K$ is flat, it follows that $W$ is $\Phi$-exact. By definition of $\Phi_l\C$ it follows that $\Phi_l\C\subseteq \D$ as full subcategories of $[\C\op,\V]$; call $E\colon\Phi_l\C\to\D$ the inclusion. Now, the inclusion $K'\colon\C\hookrightarrow\Phi_l\C$ is flat; then $\tx{Lan}_KK'\colon\D\to\Phi_l\C$ exists and is $\Phi$-exact. By uniqueness, we know that $E\circ \tx{Lan}_KK'\cong 1_{\D}$, so that $E$ is essentially surjective on objects and hence an equivalence. This also satisfies $E\circ K\cong K'$ by construction.
\end{proof}

The following lemma will be important to characterize when free $\Phi$-exact completions exist.

\begin{lema}\label{diamond-lex}
	Let $\C$ be a small $\V$-category for which $\Phi^\diamond[\C]$ has finite limits of diagrams landing in $\C$. Then $\Phi^\diamond[\C]=\Phi_l\C$ is $\Phi$-exact.
\end{lema}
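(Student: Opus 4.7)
By the preceding lemma we already have $\Phi^\diamond[\C]\subseteq\Phi_l\C$, so the plan is to establish the reverse inclusion and then conclude $\Phi$-exactness of $\Phi^\diamond[\C]$ from the fact that $\Phi_l\C$ is $\Phi$-exact by construction. Since $\Phi_l\C$ is the smallest full subcategory of $[\C\op,\V]$ containing the representables and closed under finite limits and $\Phi$-lex colimits computed in $[\C\op,\V]$, and $\C\subseteq\Phi^\diamond[\C]$ is already known, the task reduces to verifying these two closure properties for $\Phi^\diamond[\C]$.

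For the \emph{closure under finite limits}, I fix a finite weight $N\colon\D\to\V$, a diagram $H\colon\D\to\Phi^\diamond[\C]$, and an arbitrary flat $F\colon\C\to\E$ into a $\Phi$-exact $\E$; I aim to verify that $\{N,H\}*F$ exists in $\E$ and is isomorphic, via the comparison map, to $\{N,HD*F\}$, together with the analogous preservation statement for $\Phi$-exact functors out of $\E$. I would first handle the case of a small-exact $\E$: Proposition~\ref{V-topos} makes $\tx{Lan}_YF\colon[\C\op,\V]\to\E$ lex, so applying it to the finite limit $\{N,H\}$ in $[\C\op,\V]$ yields $\{N,HD*F\}$ directly. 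For a general $\Phi$-exact $\E$, I would embed $\E$ into a small-exact $\T$ via the $\Phi$-exact fully faithful inclusion supplied by Definition~\ref{exact}; Lemma~\ref{flat-pres}(2) applied to this embedding ensures that the composite with $F$ is flat in $\T$, so the small-exact case yields the identification in $\T$, and then the fully faithfulness of the embedding, combined with the fact that it preserves both finite limits and the colimits $HD*F$, transfers the universal property back to $\E$ and identifies $\{N,HD*F\}$ in $\E$ with $\{N,H\}*F$. Preservation under a $\Phi$-exact $G\colon\E\to\F$ follows by the same argument applied to $GF$, which is again flat by Lemma~\ref{flat-pres}(2).

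For the \emph{closure under $\Phi$-lex colimits}, I take $M\colon\A\op\to\V$ in $\Phi$ with $\A$ small lex, a lex $H\colon\A\to\Phi^\diamond[\C]$, and a flat $F\colon\C\to\E$ into a $\Phi$-exact $\E$, and I define $L\colon\A\to\E$ by $LD:=HD*F$. The key point is that $L$ is itself lex: testing it on a finite limit $\{N',H'\}$ in $\A$ yields $L\{N',H'\}=H\{N',H'\}*F=\{N',HH'\}*F$ by lexness of $H$, and the finite-limit closure just established, applied to the diagram $HH'\colon\K\to\Phi^\diamond[\C]$, identifies this with $\{N',HH'*F\}=\{N',LH'\}$. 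Consequently $M*L$ exists in $\E$ by $\Phi$-exactness, and a direct representable computation using the universal properties of the various weighted colimits identifies it with $(M*H)*F$. Preservation by a $\Phi$-exact $G\colon\E\to\F$ is analogous: $GL$ is again lex since $G$ and $L$ are, $G$ preserves the $\Phi$-lex colimit $M*L$, and the resulting value coincides with the corresponding colimit computed for $GF$ in place of $F$.

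The main obstacle I anticipate is the transition from a small-exact to a general $\Phi$-exact codomain in the finite-limit closure step, since Proposition~\ref{V-topos} does not directly provide lexness of $\tx{Lan}_YF$ for an arbitrary $\Phi$-exact $\E$; overcoming it requires combining the embedding supplied by Definition~\ref{exact} with Lemma~\ref{flat-pres}(2), and then arguing via fully faithfulness of the embedding that the candidate object $\{N,HD*F\}$ in $\E$ genuinely satisfies the universal property of $\{N,H\}*F$.
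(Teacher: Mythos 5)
Your proposal is correct and follows essentially the same route as the paper: reduce to showing $\Phi^\diamond[\C]$ is closed in $[\C\op,\V]$ under finite limits and $\Phi$-lex colimits, handle the small-exact case via Proposition~\ref{V-topos}, and transfer to a general $\Phi$-exact codomain through the fully faithful embedding of Definition~\ref{exact} together with Lemma~\ref{flat-pres}(2). The only (harmless) difference is that in the $\Phi$-lex colimit step you deduce lexness of $D\mapsto HD*F$ from the finite-limit closure already established, whereas the paper reruns the small-exact/general reduction there; both amount to the same computation.
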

\begin{proof}
	It is enough to prove that $\Phi^\diamond[\C]$ is closed in $[\C\op,\V]$ under finite limits and $\Phi$-lex colimits; denote the inclusion by $W\colon  \Phi^\diamond[\C]\hookrightarrow[\C\op,\V]$. Then, since $\Phi_l\C$ is minimal among the full subcategories of $[\C\op,\V]$ satisfying this property and $ \Phi^\diamond[\C]\subseteq\Phi_l\C $, it follows that $\Phi^\diamond[\C]=\Phi_l\C$ is $\Phi$-exact.
	
	Let us first prove that $\Phi^\diamond[\C]$ is closed in $[\C\op,\V]$ under finite limits. Consider a finite weight $N\colon\A\to\V$ and a diagram $S\colon\A\to\Phi^\diamond[\C]$, it is enough to show that the limit $\{N,WS\}$ is in $\Phi^\diamond$. For that, consider a $\Phi$-exact $\V$-category $\E$ and a flat $\V$-functor $F\colon\C\to\E$. By definition there exists a fully faithful and $\Phi$-exact $J\colon\E\hookrightarrow\T$ into a small-exact $\T$; then $JF$ is flat by Lemma~\ref{flat-pres} and the colimit $\{N,WS\}*JF$ exists (by cocompleteness of $\T$) and is isomorphic to $\{N,WS*JF\}$ by Proposition~\ref{V-topos}. Here, the $\V$-functor $$WS*JF\colon\A\to\T$$ is given by sending $A\in\A$ to $WS(A)*JF$. Since $WS(A)\in\Phi^\diamond$ for any $A\in\A$, the $\V$-functor $WS*F\colon\A\to\E$ is well defined and $J(WS*F)\cong WS*JF$. It follows that $$\{N,WS*JF\}\cong \{N,J(WS*F)\}\cong J\{N,WS*F\}$$
	and therefore $\{N,WS\}*JF$, being isomorphic to the object above, actually lies in $\E$. Thus $\{N,WS\}*F$ exists in $\E$ and coincides with $\{N,WS*F\}$. This also shows that such colimit is preserved by any $\Phi$-exact $\V$-functor. Therefore $\{N,WS\}$ is in $\Phi^\diamond[\C]$.
	
	\noindent We now need to prove that $\Phi^\diamond[\C]$ is closed in $[\C\op,\V]$ under $\Phi$-lex colimits. Consider therefore $M\colon\D\op\to\V$ in $\Phi$ and a lex $\V$-functor $H\colon\D\to\Phi^\diamond[\C]$; as before, it is enough to prove that $M*WH$ lies in $\Phi^\diamond$. Consider hence a flat diagram $F\colon\C\to\E$ into a $\Phi$-exact $\E$; then the colimits $(M*WH)*F$ exists in $\E$ (and is preserved) if and only if $M*(WH*F)$ exists (and is preserved), and in that case they coincide. As before, the $\V$-functor $WH*F\colon\D\to\E$ is well defined since $WH(D)\in\Phi^\diamond$ for any $D\in\D$.  
	
	\noindent Thus, to conclude it is enough to show that $WH*F$ is lex; in fact then $(M*WH)*F$ can be seen as a $\Phi$-lex colimit. Consider first the case when $\E$ is small-exact; then for any finite weight $N\colon\A\to\V$ and $\S\colon\A\to\E$
	\begin{align}
		(WH*F)\{N,S\}&\cong WH(\{N,S\})*F\tag{by Def} \\
		&\cong \{N-,WHS-\}*F\tag{WH lex}\\
		&\cong \{N-,WHS-*F\}\tag{Prop~\ref{V-topos}}\\
		&\cong \{N,(WH*F)\circ S\}\nonumber
	\end{align}
	showing that $WH*F$ is lex. Now, given any $\Phi$-exact $\V$-category $\E$, there exists a fully faithful and $\Phi$-exact $J\colon\E\hookrightarrow\T$ into a small-exact $\T$. Since $\Phi^\diamond[\C]$ is lex, $JF$ is still flat by Lemma~\ref{flat-pres}; thus $WH*JF$ is lex by the previous arguments. But $WH*JF\cong J(WH*F)$ since any $\Phi$-exact $\V$-functor preserves $\Phi^\diamond$-flat colimits. Thus $WH*F$ is lex since $J$ is fully faithful.
\end{proof}

Below we give equivalent conditions for $\Phi_l\C$ to be the free $\Phi$-exact $\V$-category on $\C$.

\begin{teo}\label{main}
	The following are equivalent for a small $\V$-category $\C$:\begin{enumerate}\setlength\itemsep{0.25em}
		\item the free $\Phi$-exact completion of $\C$ exists;
		\item $K\colon\C\hookrightarrow\Phi_l\C$ exhibits $\Phi_l\C$ as the free $\Phi$-exact completion of $\C$;
		\item $\Phi^\diamond[\C]=\Phi_l\C$;
		\item $\Phi^\diamond[\C]$ has finite limits of diagrams landing in $\C$.
	\end{enumerate}
\end{teo}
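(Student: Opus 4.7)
My plan is to close the cycle \((2)\Rightarrow(3)\Rightarrow(4)\Rightarrow(2)\) after first reducing the peripheral equivalences to earlier results: the implication \((2)\Rightarrow(1)\) is trivial, \((1)\Rightarrow(2)\) is Lemma~\ref{unique-free}, and for \((3)\Leftrightarrow(4)\) the implication \((4)\Rightarrow(3)\) is Lemma~\ref{diamond-lex}, while \((3)\Rightarrow(4)\) holds because \(\Phi_l\C\) is always \(\Phi\)-exact, and hence lex, so that \(\Phi^\diamond[\C]=\Phi_l\C\) inherits all finite limits.

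For \((2)\Rightarrow(3)\) it suffices to prove \(\Phi_l\C\subseteq\Phi^\diamond[\C]\), the other inclusion being already established. I would fix \(M\in\Phi_l\C\), a \(\Phi\)-exact \(\V\)-category \(\E\), and a flat \(F\colon\C\to\E\), and use the universal property to produce a \(\Phi\)-exact \(\tilde F\colon\Phi_l\C\to\E\) with \(\tilde F K\cong F\). Since \(K\) is dense (as noted in Lemma~\ref{unique-free}) the pointwise Kan extension formula identifies \(\tilde F(M)\) with \(M*F\), so the colimit exists. For preservation by a \(\Phi\)-exact \(G\colon\E\to\F\), I would observe that \(G\tilde F\) is \(\Phi\)-exact and extends \(GF\) along \(K\); applying the equivalence over \(\F\) forces \(GF\) to be flat, and then uniqueness of \(\Phi\)-exact extensions yields \(G\tilde F\cong\widetilde{GF}\), which read at \(M\) says \(G(M*F)\cong M*GF\). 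Hence \(M\in\Phi^\diamond\).

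For the converse \((3)\Rightarrow(2)\), given a flat \(F\colon\C\to\E\) into a \(\Phi\)-exact \(\E\), I would set \(\tilde F(M):=M*F\); this is well defined because every \(M\in\Phi_l\C\) lies in \(\Phi^\diamond\), and \(\tilde F K\cong F\) follows from absoluteness of representable-weighted colimits. Preservation of \(\Phi\)-lex colimits by \(\tilde F\) will be formal from associativity of weighted colimits. The hard part will be verifying that \(\tilde F\) preserves finite limits; my plan here is to embed \(\E\) into a small-exact \(\T\) via a \(\Phi\)-exact fully faithful \(J\) as in Definition~\ref{exact}, use Lemma~\ref{flat-pres}(2) (whose hypothesis is (4), available via \((3)\Rightarrow(4)\)) to see that \(JF\) is still flat, and then invoke Proposition~\ref{V-topos} so that \(\tx{Lan}_Y(JF)\colon[\C\op,\V]\to\T\) is lex. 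Restricting this lex functor to \(\Phi_l\C\subseteq[\C\op,\V]\), and noting that \(J\) preserves the \(\Phi^\diamond\)-colimits defining \(\tilde F\), the restriction will coincide with \(J\circ\tilde F\); since \(J\) is lex and fully faithful, \(\tilde F\) then follows to be lex as well. The remaining half of the equivalence, that every \(\Phi\)-exact \(G\colon\Phi_l\C\to\E\) arises as the extension of its own restriction \(GK\), follows from \(GK\) being flat --- because \(G\), being lex, sends the finite limits \(\{N,YH\}\) computed in \(\Phi_l\C\) to \(\{N,GKH\}\) in \(\E\) --- together with uniqueness of extensions along the dense \(K\). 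The principal obstacle is thus the lex-preservation step, which is precisely where the small-exact embedding of Definition~\ref{exact} and the flatness-transport of Proposition~\ref{V-topos} are indispensable.
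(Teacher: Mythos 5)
Your proposal is correct and follows essentially the same route as the paper: the same cycle of implications, the same appeals to Lemmas~\ref{unique-free}, \ref{diamond-lex}, and~\ref{flat-pres}, and the same use of the small-exact embedding of Definition~\ref{exact} together with Proposition~\ref{V-topos} for the lex-preservation step. The only (harmless) variation is that you obtain preservation of $\Phi$-lex colimits by $\tilde F$ directly from the interchange of weighted colimits, whereas the paper deduces it from cocontinuity of $\tx{Lan}_Y(JF)$ and the pointwise computation of those colimits in $[\C\op,\V]$.
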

\begin{proof}
	$(1)\Leftrightarrow(2)$ is given by Lemma~\ref{unique-free}, $(3)\Rightarrow(4)$ is trivial since $\Phi_l\C$ is lex by definition, and $(4)\Rightarrow(3)$ follows from Lemma~\ref{diamond-lex} above.
	
	$(2)\Rightarrow (3)$. Since $\Phi^\diamond[\C]\subseteq\Phi_l\C$ always holds, it s enough to prove that every $M\in\Phi_l\C$ lies in $\Phi^\diamond[\C]$. Fix an element $M\in\Phi_l\C$, then by density of $K$ we have that $M\cong M*K$ in $\Phi_l\C$, and such colimit is preserved by any $\V$-functor $\Phi_l\C\to\E$ which is the left Kan extension of its restriction to $\C$. \\
	To prove that $M\in\Phi^\diamond[\C]$, consider a flat $\V$-functor $H\colon\C\to\E$ into a $\Phi$-exact $\E$. By $(2)$ the left Kan extension $\tx{Lan}_KH$ exists and by the arguments above
	$$\tx{Lan}_KH(M)\cong \tx{Lan}_KH(M*K)\cong M*(\tx{Lan}_KH\circ K)\cong M*H $$
	so that $M*H$ exists in $\E$. Let now $F\colon \E\to\F$ be any $\Phi$-exact $\V$-functor into another $\Phi$-exact $\V$-category $\F$. Then 
	$$ F\circ\tx{Lan}_KH \cong \tx{Lan}_K(FH)$$
	by $(2)$ since the $\V$-functor on the left is $\Phi$-exact and extends $FH$. It follows that 
	$$ F(M*H)\cong F (\tx{Lan}_KH(M) )\cong\tx{Lan}_K(FH)(M)\cong M*FH $$
	and therefore $M\in \Phi^\diamond[\C]$.
	
	$(3)\Rightarrow (2)$. First notice that any $\V$-functor $G\colon\Phi_l\C\to\E$ into a $\Phi$-exact $\E$ is the left Kan extension of its restriction to $\C$ if and only if it preserves the colimit $M*K$ for any $M\in \Phi_l\C$. This follows from the fact that $K$ is dense and the $M*K$ form a density presentation of it. In particular, since every $M\in\Phi_l\C$ is by hypothesis in $\Phi^\diamond$ and $K$ is flat, every $\Phi$-exact $\V$-functor preserves said density presentation; thus $G\cong \tx{Lan}_K (GK)$ whenever $G\colon\Phi_l\C\to\E$ is $\Phi$-exact.\\
	Now we can show that acting by pre-composition with $K\colon\C\hookrightarrow\Phi_l\C$ induces a $\V$-functor 
	$$ \Phi\tx{-Ex}(\Phi_l\C,\E)\longrightarrow\tx{Flat}(\C,\E) $$ 
	for any $\Phi$-exact $\E$. For, let $W\colon\C_l\hookrightarrow\Phi_l\C$ be the inclusion, so that $W\circ Z\cong K$; then for any $\Phi$-exact $G\colon\Phi_l\C\to\E$ as above
	$$ G\circ W\cong \tx{Lan}_K (GK) \circ W\cong \tx{Lan}_W(\tx{Lan}_Z(GK))\circ W\cong \tx{Lan}_Z(GK); $$  
	therefore $\tx{Lan}_Z(GK)$ exists and preserves finite limits of elements from $\C$ (since both $G$ is lex and $W$ preserves them). So $GK$ is flat.\\
	To conclude we are only left to prove that for any flat $F\colon\C\to\E$ the left Kan extension $\tx{Lan}_KF$ exists and is $\Phi$-exact. By the arguments above, $\tx{Lan}_KF$ exists for any $F$ (as long as $\E$ is $\Phi$-exact). Thus we only need to prove that if $F$ is flat then $\tx{Lan}_KF$ is $\Phi$-exact.
	
	Consider first the case of a small-exact $\T$; then $\tx{Lan}_KF$ is the restriction of the left Kan extension $\tx{Lan}_YF$ along the Yoneda embedding $Y$. By Proposition~\ref{V-topos}, $\tx{Lan}_YF$ is lex and cocontinuous; therefore $\tx{Lan}_KF$ is lex and preserves all colimits that are computed pointwise in $[\C\op,\V]$. It follows that $\tx{Lan}_KF$ is $\Phi$-exact. Now, if $\E$ is any $\Phi$-exact $\V$-category, by definition there exists a fully faithful and $\Phi$-exact $H\colon\E\hookrightarrow\T$ into a small-exact $\T$. Since $H$ is $\Phi$-exact it preserves the colimits in $\E$ involved in the density presentation of $Z$, therefore 
	$$ H\circ\tx{Lan}_KF\cong \tx{Lan}_K(HF). $$
	By Lemma~\ref{flat-pres} the $\V$-functor $HF$ is still flat. Therefore $\tx{Lan}_K(HF)$ is $\Phi$-exact by the previous results, and hence also $\tx{Lan}_KF$ is since $H$ is fully faithful.
\end{proof}

In the case of a small $\Phi$ we obtain the following:

\begin{teo}\label{main-small}
	Let $\Phi$ be a small class of lex weights. The conditions of Theorem~\ref{main} above are further equivalent to:\begin{enumerate}
		\item[(5)]  left Kan extending along $K\colon\C\to\Phi_l\C$ induces an equivalence
		$$ \tx{Flat}(\C,\E) \simeq\Phi\tx{-Ex}(\Phi_l\C,\E) $$
		for any $\E$ that is a lex localization of $\Phi_l\E$ (that is, for any $\E$ that is $\Phi$-exact in the sense of \cite{GL12:articolo}).
	\end{enumerate}
\end{teo}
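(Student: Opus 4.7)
The direction (5)~$\Rightarrow$~(2) is immediate: by \cite[Theorem~4.1]{GL12:articolo}, every $\V$-category which is $\Phi$-exact in the sense of Definition~\ref{exact} is in particular $\Phi$-exact in the sense of Garner--Lack, so the equivalence in (5) restricts to the one of (2). The plan is therefore to establish (2)~$\Rightarrow$~(5), and for this the key idea is to reduce to Theorem~\ref{main} via the universe-enlargement construction sketched in Remark~\ref{small}.

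Fix a $\V$-category $\E$ which is $\Phi$-exact in the sense of \cite{GL12:articolo}. Choose an enlargement $\mathbb V$ of $\V$, constructed as $\mathbb V_0:=\tx{Lex}(\V\op_{0f},\bo{SET})$ for a larger universe of sets $\bo{SET}$ in which $\E$ is small, with the induced monoidal structure. By construction $\V_f\simeq\mathbb V_f$, so finite weights, flatness, and $\Phi$-exactness are all invariant under the enlargement; moreover Proposition~\ref{V-topos} and Lemma~\ref{flat-pres} remain valid at the $\mathbb V$-level. The $\mathbb V$-small version of \cite[Theorem~4.1]{GL12:articolo} then provides a fully faithful $\Phi$-exact embedding $J\colon\E\hookrightarrow\T$ into a small-exact $\mathbb V$-category $\T$. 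Since $\Phi$ is small, $\Phi_l\C$ is a small $\V$-category (and hence also a small $\mathbb V$-category), so the $\V$-enriched and $\mathbb V$-enriched versions of $\Phi\tx{-Ex}(\Phi_l\C,\E)$ and $\tx{Flat}(\C,\E)$ coincide.

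With this setup, the entire argument of (3)~$\Rightarrow$~(2) in the proof of Theorem~\ref{main} goes through verbatim at the $\mathbb V$-level. For the restriction direction, any $\Phi$-exact $G\colon\Phi_l\C\to\E$ satisfies $G\circ W\cong\tx{Lan}_Z(GK)$, where $W\colon\C_l\hookrightarrow\Phi_l\C$, which shows that $GK$ is flat. Conversely, given a flat $F\colon\C\to\E$, the composite $JF\colon\C\to\T$ is still flat by Lemma~\ref{flat-pres}, so by the small-exact case already handled in Theorem~\ref{main} the left Kan extension $\tx{Lan}_K(JF)\colon\Phi_l\C\to\T$ exists and is $\Phi$-exact. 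Fully-faithfulness and $\Phi$-exactness of $J$ then force $\tx{Lan}_KF$ to exist in $\E$, to be $\Phi$-exact, and to satisfy $J\circ\tx{Lan}_KF\cong\tx{Lan}_K(JF)$; this yields the required equivalence.

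The main obstacle in this plan is not a new technical difficulty but rather the verification that the universe enlargement is genuinely harmless, namely that neither flatness nor $\Phi$-exactness of a $\V$-functor changes when it is viewed as a $\mathbb V$-functor, and that the $\Phi$-exact embedding into a small-exact $\T$ is actually available for every Garner--Lack $\Phi$-exact $\E$. Both points are recorded in Remark~\ref{small}, and the smallness of $\Phi$ is precisely what ensures that no size issue arises either in $\Phi_l\C$ or in the functor $\V$-category $\Phi\tx{-Ex}(\Phi_l\C,\E)$.
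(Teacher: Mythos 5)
Your direction $(5)\Rightarrow(2)$ agrees with the paper's. For $(2)\Rightarrow(5)$, however, you take the universe-enlargement route sketched in Remark~\ref{small}, whereas the paper's proof of Theorem~\ref{main-small} is designed precisely to \emph{avoid} universe enlargement: given a flat $F\colon\C\to\E$ into a Garner--Lack $\Phi$-exact $\E$, it uses the smallness of $\Phi$ to produce a \emph{small} full subcategory $J\colon\F\hookrightarrow\E$ closed under finite limits and $\Phi$-lex colimits through which $F$ factors; since $\F$ is small, the two notions of $\Phi$-exactness coincide for it, so condition $(2)$ applies directly to $F'\colon\C\to\F$, and $\tx{Lan}_KF\cong J\circ\tx{Lan}_KF'$ is then $\Phi$-exact. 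Note that in your argument the smallness of $\Phi$ plays essentially no role (you invoke it only to say $\Phi_l\C$ is small, which is not needed for Theorem~\ref{main}); if your argument were complete it would prove the statement for an arbitrary class $\Phi$, which should already make you suspicious.

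The concrete gap is in the claim that ``the entire argument of $(3)\Rightarrow(2)$ in the proof of Theorem~\ref{main} goes through verbatim at the $\mathbb V$-level.'' That argument, and in particular Lemma~\ref{flat-pres} (which you need to conclude that $JF\colon\C\to\T$ is flat), takes as hypothesis that $\Phi^\diamond[\C]$ contains the finite limits of diagrams from $\C$. But $\Phi^\diamond[\C]$ is defined by quantifying over \emph{all} $\Phi$-exact $\V$-categories and $\Phi$-exact $\V$-functors; after passing to $\mathbb V$ this quantification ranges over a strictly larger class, so $\Phi^\diamond_{\mathbb V}[\C]$ is a priori smaller than $\Phi^\diamond_{\V}[\C]$ and condition $(4)$ of Theorem~\ref{main} does not automatically transfer from $\V$ to $\mathbb V$. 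Your hypothesis is condition $(2)$ at the $\V$-level only, so you have not established the $\mathbb V$-level hypothesis that your appeal to Theorem~\ref{main} and Lemma~\ref{flat-pres} requires; as written the argument is circular at this point. One would either have to prove this transfer separately (which is not obvious and is not done in Remark~\ref{small}, whose scope is the paper's own notion of $\Phi$-exactness rather than the statement of Theorem~\ref{main-small}), or avoid the issue altogether as the paper does by exploiting the smallness of $\Phi$.
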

\begin{proof}
	It is clear that $(5)$ implies $(2)$ of Theorem~\ref{main} since every $\Phi$-exact $\V$-category (in our sense) is a lex localization of $\Phi_l\E$ (\cite[Theorem~4.1]{GL12:articolo}). Conversely, let us assume that $\Phi$ is small and that $(2)$ holds. Consider a $\V$-category $\E$ for which the inclusion $\E\hookrightarrow\Phi_l\E$ has a lex left adjoint. Arguing as in the beginning of $(3)\Rightarrow(2)$ of the proof above, we see that precomposition by $K$ induces a $\V$-functor
	$$ \Phi\tx{-Ex}(\Phi_l\C,\E)\longrightarrow\tx{Flat}(\C,\E) $$
	and that every $\Phi$-exact $\Phi_l\C\to\E$ is the left Kan extension of its restriction to $\C$. Thus we only need to prove that $\tx{Lan}_KF$ exists and is $\Phi$-exact whenever $F\colon\C\to\E$ is flat. 
	
	For that, let $F\colon\C\to\E$ be flat. Then, since $\Phi$ is small there exists a small full subcategory $J\colon\F\hookrightarrow\E$ that is closed under finite limits and $\Phi$-lex colimits, and such that $F$ factors as $F=J\circ F'$ with $F'\colon\C\to\F$. Then $\F$ is $\Phi$-exact (in our sense) by \cite[4.11 \& 4.2]{GL12:articolo}, and $F'$ is easily seen to be flat (since $J$ reflects limits and colimits in the codomain). Then $\tx{Lan}_KF'\colon\Phi_l\C\to\F$ exists and is $\Phi$-exact by $(2)$; moreover, $J\circ\tx{Lan}_KF'\cong \tx{Lan}_KF$ by the arguments at the beginning of $(3)\Rightarrow(2)$ above (using that $J$, being $\Phi$-exact, preserves $\Phi^*$-lex colimits). Thus $\tx{Lan}_KF$ is $\Phi$-exact being the composite of $\Phi$-exact $\V$-functors.
\end{proof}

\begin{obs}
	It is not possible to express free exact completions in the flat world in terms of existence of a left biadjoint to some forgetful functor (as in \cite[3.7]{GL12:articolo}). The problem being that flat $\V$-functors do not compose, not even if the categories involved satisfy the equivalent conditions of Theorem~\ref{main} above, and hence they cannot form the class of morphisms of a category.\\
	A formal explanation of why this happens in the case of exact completions is given in \cite[3.3]{CV98:articolo}. A direct counterexample can also be easily constructed in the setting of infinitary lextensive categories; that is, by considering the class of lex-colimits $\Phi_{\tx{ilext}}$ of Section~\ref{ilext}. The category $\bo{Fld}$ of fields is finitely accessible with connected limits; hence $\bo{Fld}\simeq\tx{Flat}(\bo{Fld}_f\op,\bo{Set})$ and $\bo{Fld}_f\op$ satisfies the conditions of the theorem above (by Proposition~\ref{ilext-prop}). However, the composition of any flat $\bo{Fld}_f\op\to\bo{Set}$, corresponding to a field $\mathbb K$, with the flat functor $\bo{Set}(2,-)\colon \bo{Set}\to\bo{Set}$ is not flat since it would correspond to a product $\mathbb K\times \mathbb K$ in $\bo{Fld}$, which does not exist.
\end{obs}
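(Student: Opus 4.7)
The plan is to substantiate the counterexample in detail: with $\Phi=\Phi_{\tx{ilext}}$ and $\V=\bo{Set}$, I will exhibit two flat functors — one $\bo{Fld}_f\op\to\bo{Set}$ and one $\bo{Set}\to\bo{Set}$ — whose composite fails to be flat. The preliminary work is to justify the bracketed assertions about $\bo{Fld}$. First, $\bo{Fld}$ is finitely accessible: every field is the filtered union of its finitely generated subfields, each of which is finitely presentable as an object of $\bo{Fld}$. This gives the standard representation $\bo{Fld}\simeq\tx{Flat}(\bo{Fld}_f\op,\bo{Set})$, sending a field $\mathbb{K}$ to $G_{\mathbb{K}}:=\bo{Fld}(-,\mathbb{K})$. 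Second, $\bo{Fld}$ has connected limits, computed as in commutative rings (equalizers and connected pullbacks of fields remain fields). Combining these two facts, Proposition~\ref{ilext-prop} yields condition~(4) of Theorem~\ref{main} for $\bo{Fld}_f\op$ with respect to $\Phi_{\tx{ilext}}$.

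With the setup in place, I fix a field $\mathbb{K}$, so that $G_{\mathbb{K}}$ is flat by the equivalence. For the second functor I take $F:=\bo{Set}(2,-)\cong(-)^2\colon\bo{Set}\to\bo{Set}$; flatness follows from Proposition~\ref{flatchar}(2), since $\bo{Set}$ is lex and $(-)^2$ manifestly preserves all finite limits.

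The main step is then to show that $F\circ G_{\mathbb{K}}\colon\bo{Fld}_f\op\to\bo{Set}$ is not flat. This composite sends $A$ to $\bo{Fld}(A,\mathbb{K})^2$; if it were flat, the equivalence above would produce a field $L$ with $\bo{Fld}(A,L)\cong\bo{Fld}(A,\mathbb{K})^2$ naturally in $A\in\bo{Fld}_f$, and hence, by extension along filtered colimits, also in $A\in\bo{Fld}$. Such $L$ would be a categorical binary product $\mathbb{K}\times\mathbb{K}$ in $\bo{Fld}$. To force a concrete numerical contradiction I specialize to $\mathbb{K}=\mathbb{F}_4$ and test at $A=\mathbb{F}_4$: the right-hand side has cardinality $|\tx{Aut}(\mathbb{F}_4)|^2=4$ (identity and Frobenius, squared), whereas the left-hand side counts field embeddings $\mathbb{F}_4\hookrightarrow L$, each determined by the image of a generator, which must be a root of $x^2+x+1$ in $L$; any field contains at most two such roots, giving $|\bo{Fld}(\mathbb{F}_4,L)|\le 2<4$. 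The main obstacle is precisely this last numerical step, which concretely witnesses the classical non-existence of binary products in $\bo{Fld}$ and completes the counterexample.
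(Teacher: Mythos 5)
Your proposal is correct and follows the paper's own route exactly: you justify that $\bo{Fld}$ is finitely accessible with connected limits so that Proposition~\ref{ilext-prop} applies to $\bo{Fld}_f\op$, note that $\bo{Set}(2,-)$ is flat by Proposition~\ref{flatchar}(2) (modulo the universe-enlargement convention of Remark~\ref{large} for the large domain $\bo{Set}$), and observe that flatness of the composite would force a binary product $\mathbb K\times\mathbb K$ in $\bo{Fld}$. Your specialization to $\mathbb K=\mathbb F_4$ with the root-counting argument is in fact a necessary sharpening rather than mere detail: the paper's blanket assertion that $\mathbb K\times\mathbb K$ never exists fails for instance at $\mathbb K=\mathbb Q$, where $\bo{Fld}(A,\mathbb Q)$ is always empty or a singleton and hence $\mathbb Q\times\mathbb Q\cong\mathbb Q$ does exist, so a concrete choice such as $\mathbb F_4$ (or any field admitting an extension with at least two embeddings into $\mathbb K$) is genuinely needed to make the counterexample airtight.
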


\begin{obs}
	Infinitary versions of regularity and Barr-exactness have been studied in the literature~\cite{Mak90:articolo} together with corresponding notions of free infinitary exact completions and of weak infinitary limits~\cite{HT96:articolo,vitale2001essential}. Similarly, flat functors also admit infinitary generalizations, which found important applications especially in the theory of accessible categories~\cite{Lai81:articolo,AR94:libro}.\\
	Both the infinitary notions of exact categories and flat functors are obtained by replacing the class of finite limits with that of $\lambda$-small limits, for a fixed regular cardinal $\lambda$. Such generalization could very likely be carried out also in our setting; however, we have decided not to address this here since it would require an infinitary version of~\cite{GL12:articolo}. One possible way to tackle this problem is to note that all the results of~\cite{GL12:articolo} and of this paper rely mostly on the fact that the class of finite limits is {\em sound} in the sense of~\cite{ABLR02:articolo,LT22:virtual}; thus an infinitary generalization is likely to succeed since class of $\lambda$-small limits (for a regular cardinal $\lambda$) is sound as well.
\end{obs}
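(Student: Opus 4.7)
The plan is to sketch how the infinitary generalisation announced in this remark could be realised, and to identify the one obstacle that genuinely deserves work. The approach proceeds in three stages. \emph{First}, one would extend the setup of~\cite{GL12:articolo} by replacing, throughout, ``finite weighted limit'' with ``$\lambda$-small weighted limit'' for a fixed regular cardinal $\lambda$; a weight $M\colon\C\op\to\V$ is $\lambda$-small when $\C$ has fewer than $\lambda$ objects and all hom-objects $\C(A,B)$ and values $MA$ are $\lambda$-presentable in $\V_0$. The corresponding notions of $\lambda$-lex $\V$-category and $\lambda$-lex weight are then straightforward, and the candidate $\lambda$-exact completion $\Phi_{\lambda}\C$ of a $\lambda$-lex $\C$ is simply the closure of the representables in $\P\C$ under $\lambda$-small limits and $\Phi$-$\lambda$-lex colimits.

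\emph{Second}, the content of Section~\ref{flatness} transfers by declaring a $\V$-functor $F\colon\C\to\E$ into a $\lambda$-lex $\E$ to be $\lambda$-flat when, for every $\lambda$-small weight $N\colon\D\to\V$ and every diagram $H\colon\D\to\C$, the colimit $\{N,YH\}*F$ exists and realises $\{N,FH\}$ via the canonical comparison. Propositions~\ref{flat-prop}, \ref{flatchar}, and~\ref{V-topos} should port verbatim, provided that $\C_l$ is replaced with the closure of representables in $[\C\op,\V]$ under $\lambda$-small limits, and that the free finite-limit completion $\tx{Fin}^\dagger\C$ used in the proof of Proposition~\ref{V-topos} is replaced with the free $\lambda$-lex completion. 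The only non-formal property invoked in those proofs is the commutation of filtered colimits with finite limits in $\V$, whose $\lambda$-analogue, namely the commutation of $\lambda$-filtered colimits with $\lambda$-small limits, is precisely the soundness of the class of $\lambda$-small weights.

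\emph{Third}, the machinery of Section~\ref{main-sect} transfers with ``flat'' uniformly replaced by ``$\lambda$-flat'': one defines a new saturation $\Phi^\diamond$ relative to $\lambda$-flat functors and $\lambda$-$\Phi$-exact categories, and the proofs of Lemma~\ref{flat-pres}, Lemma~\ref{diamond-lex}, and Theorem~\ref{main} transcribe with essentially no conceptual change, since they rely only on the behaviour of density presentations and on the cocontinuity of left Kan extensions, both of which are insensitive to the finite-versus-$\lambda$-small distinction. The universe-enlargement trick of Remark~\ref{small} also carries over by enlarging along $\lambda$-presentable objects instead of finitely presentable ones.

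The principal obstacle, as the author signals, is to produce an infinitary version of~\cite{GL12:articolo} itself: one must verify that Garner and Lack's iterative closure construction of $\Phi_l\C$, their analysis of the saturation $\Phi^*$, and the small-exact embedding~\cite[Theorem~4.1]{GL12:articolo} all remain valid for $\lambda$-small limits. The formal engine behind their arguments is the soundness of finite limits in the sense of~\cite{ABLR02:articolo,LT22:virtual}, and since the class of $\lambda$-small limits is also sound, the same abstract machinery should deliver the infinitary theory. The genuine work then lies in the careful transfinite bookkeeping of $\lambda$-presentability conditions when iterating the closure operation, and in controlling the size of $\Phi_{\lambda}\C$ so that the analogues of ``small-exact'' and ``lex localization of $\P\D$'' continue to make sense; this is where one expects most of the technical difficulty, rather than in the flatness part of the theory developed here.
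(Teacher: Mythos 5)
This statement is a discussion remark for which the paper supplies no proof: it merely asserts that an infinitary generalization is plausible because the class of $\lambda$-small limits is sound. Your sketch elaborates exactly the strategy the paper itself names---replace finite weights by $\lambda$-small ones and lean on soundness to transfer the key commutation results---so it is consistent with, and essentially the same approach as, the paper's own (unproved) reasoning.
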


\section{Examples}\label{examples}

In the sections below we consider specific examples of $\Phi$-exactness. In each case we express the necessary and sufficient conditions for a category to have a free $\Phi$-exact completion in terms of the existence of certain {\em virtual} (weak, multi, etc) finite limits. 

To show that, whenever a category has these specified virtual finite limits then it also has a free $\Phi$-exact completion, is quite standard and requires only some easy computations. However, to prove that the opposite implications holds we need to invoke the theory of accessible categories with limits; in all the examples we shall see that if the free $\Phi$-exact completion for $\C$ exists, then the category of flat functors from $\C$ to $\bo{Set}$ has limits of some sort, and then apply a duality theorem (as in \cite{Ten22:duality}) that implies the existence of said virtual limits.

\subsection{Weak limits, regularity, and exactness}\label{weaklylex}$ $

Let $\Phi_{\tx{reg}}$ and $\Phi_{\tx{ex}}$ be the classes of lex weights respectively for regular and Barr-exact categories (see \cite[5.1 and 5.2]{GL12:articolo}). A functor is $\Phi_{\tx{reg}}$ or $\Phi_{\tx{ex}}$-exact if and only if it is {\em regular}; meaning that it preserves finite limits and regular epimorphisms.

Recall the notion of weak limit below (see for instance \cite{Hu96:articolo,CV98:articolo,AR94:libro}).

\begin{Def}
	We say that a diagram $H\colon\D\to \C$ has a {\em weak limit} in $\C$ if there exists an object $C\in \C$ together with a cone $\delta\colon\Delta C\to H$ such that every other cone $e\colon\Delta E\to H$ factors as $e=\delta\circ f$ for some $f\colon E \to C$ in $\C$. We say that $\C$ is {\em weakly lex} if it has all weak finite limits.
\end{Def}

Equivalently, $C$ is a weak limit of $H$ if there exists a regular epimorphism $$\C(-,C)\twoheadrightarrow\lim YH$$ in $[\C\op,\bo{Set}]$, where $Y\colon\C\to[\C\op,\bo{Set}]$ is the Yoneda embedding. Note that weak limits are not unique.

\begin{prop}\label{free-exact}
	The following are equivalent for a small category $\C$:\begin{enumerate}\setlength\itemsep{0.25em}
		\item $\C$ has a free regular completion;
		\item $\C$ has a free exact completion;
		\item $\tx{Flat}(\C,\bo{Set})$ has products;
		\item $\C$ is weakly lex.
	\end{enumerate}
\end{prop}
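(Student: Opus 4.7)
The plan is to apply Theorem~\ref{main} to both $\Phi=\Phi_{\tx{reg}}$ and $\Phi=\Phi_{\tx{ex}}$, thereby turning (1) and (2) into the single question of whether the corresponding $\Phi^\diamond[\C]$ contains $\lim YH$ for every finite diagram $H\colon\D\to\C$. I would then close the loop via $(4)\Rightarrow(1),(2)\Rightarrow(3)\Rightarrow(4)$.

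For $(4)\Rightarrow(1),(2)$, given a finite diagram $H\colon\D\to\C$, I would pick a weak limit $W$ with cone $\delta\colon\Delta W\to H$. The induced $Y\delta\colon YW\twoheadrightarrow\lim YH$ is a regular epimorphism in $[\C\op,\bo{Set}]$; its kernel pair, being the subpresheaf of $YW\times YW$ parameterizing pairs of cones on a common vertex with the same image in $\lim YH$, is covered by $YW'$ for a further weak limit $W'\in\C$ of the associated finite diagram. This realizes $\lim YH$ as the coequalizer in $[\C\op,\bo{Set}]$ of a pair $YW'\rightrightarrows YW$ of a form that, once transported along any flat $F\colon\C\to\E$ into a regular or Barr-exact $\E$, becomes a coequalizer of the relevant kernel pair in $\E$ whose value is forced by flatness to coincide with $\lim FH$. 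Using that this coequalizer is a $\Phi_{\tx{reg}}$- (respectively $\Phi_{\tx{ex}}$-)lex colimit in $\E$ and is hence preserved by every regular $G\colon\E\to\F$, one concludes $\lim YH\in\Phi^\diamond[\C]$.

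For $(1),(2)\Rightarrow(3)$, a free regular or exact completion $\D$ provides an equivalence $\tx{Flat}(\C,\bo{Set})\simeq\tx{Reg}(\D,\bo{Set})$. Since products in $\bo{Set}$ commute with finite limits and send surjections to surjections, regular functors valued in $\bo{Set}$ are closed under pointwise products in $[\D,\bo{Set}]$, and hence $\tx{Flat}(\C,\bo{Set})$ inherits all products.

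For $(3)\Rightarrow(4)$, I would exploit that $\tx{Flat}(\C,\bo{Set})\simeq\tx{Ind}(\C)$ is finitely accessible, with $\C$ sitting (up to Cauchy completion) as the full subcategory of finitely presentable objects. A classical result on accessible categories upgrades the existence of products to full completeness, and the duality between complete finitely accessible categories and small weakly lex categories, recalled at the start of Section~\ref{examples} and treated in \cite{AR94:libro,Ten22:duality}, then forces $\C$ to be weakly lex. The main obstacle I anticipate is the verification in the second step: showing that $\lim YH$ lands in $\Phi^\diamond[\C]$ rather than merely in $\Phi_l\C$ requires a careful interplay between the weak-limit cover of the kernel pair and the regular structure of the codomain, to guarantee that the coequalizer description is both constructible by every flat $F$ and preserved by every $\Phi$-exact~$G$.
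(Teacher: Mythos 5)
Your cycle of implications and the arguments for $(1),(2)\Rightarrow(3)$ and $(3)\Rightarrow(4)$ match the paper's proof (the paper likewise uses stability of regular epis under products in $\bo{Set}$, and the duality identifying $\hat{\C}\op$ with the finitely presentables of $\tx{Flat}(\C,\bo{Set})$; it just adds the small retract argument needed to pass from the Cauchy completion $\hat\C$ being weakly lex back to $\C$ itself, which you should not omit). The genuine problem is in $(4)\Rightarrow(1),(2)$, exactly at the point you flag. Writing $\lim YH$ as the coequalizer of $YW'\rightrightarrows YW$, cocontinuity of $-*F$ and flatness do give that $\tx{coeq}(FW'\rightrightarrows FW)$ exists and equals $\lim FH$; but this pair is \emph{not} a kernel pair in $\E$, and your claim that it ``becomes a coequalizer of the relevant kernel pair'' requires the map $FW'\to\ker\bigl(FW\to\lim FH\bigr)$ to be a (strong) epimorphism. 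That is precisely the left covering property of $F$ for the kernel-pair diagram, which is not part of the definition of flatness and is, in this paper, a \emph{consequence} of the proposition you are proving (via the subsequent identification of flat with left covering). Without it you cannot conclude that a regular $G\colon\E\to\F$ preserves $\tx{coeq}(FW'\rightrightarrows FW)$, since regular functors only preserve coequalizers of kernel pairs, nor that $\tx{coeq}(GFW'\rightrightarrows GFW)$ computes $\lim YH*(GF)$ correctly.

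The paper sidesteps this entirely with a different presentation: every $M\in\C_l$ is exhibited as $YA\stackrel{q}{\twoheadrightarrow}M\stackrel{m}{\rightarrowtail}YB_1\times\cdots\times YB_n$ (the cover exists by weak lexness, the mono by finiteness of the limit), so $M$ is the coequalizer of the kernel pair of an \emph{actual morphism} $(Yh_1,\dots,Yh_n)\colon YA\to\prod_j YB_j$ between representables. That kernel pair is itself a finite limit of representables, hence is carried by $-*F$ \emph{on the nose} to the kernel pair of $(Fh_1,\dots,Fh_n)$ in $\E$; the coequalizer of a kernel pair of a genuine map exists in any regular category and is preserved by any regular functor, with no epimorphism condition on $F$ required. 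If you want to salvage your route instead, you must first prove the auxiliary fact that flatness forces the comparison $FW\to\lim FH$ to be a regular epimorphism (which can be done by presenting $\lim YH$ as the coequalizer of its own kernel pair $\ker(Y\delta)\rightrightarrows YW$ and observing that $\ker(Y\delta)$ is again a finite limit of representables), and then apply it a second time to the weak limit $W'$ covering $\ker(Y\delta)$; as written, this step is missing and the argument does not close.
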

\begin{proof}
	The implication $(1)\Rightarrow(2)$ is trivial by Theorem~\ref{main} since the inclusion $\Phi^\diamond_{\tx{reg}}[\C]\subseteq \Phi^\diamond_{\tx{ex}}[\C]$ always hold. For $(2)\Rightarrow(3)$, by hypothesis there is an equivalence $$\tx{Flat}(\C,\bo{Set})\simeq\tx{Reg}(\Phi_{\tx{ex}}(\C),\bo{Set}).$$ Since regular epimorphisms are stable in $\bo{Set}$ under products, we know that the category $\tx{Reg}(\Phi_{\tx{ex}}(\C),\bo{Set})$ is closed under products in the presheaf category; thus $\tx{Flat}(\C,\bo{Set})$ has products. Regarding $(3)\Rightarrow(4)$, the Cauchy completion $\hat\C$, of $\C$, is weakly lex by \cite[Corollary~1.6]{Hu96:articolo} (or \cite[Theorem~3.8]{Ten22:duality}) since it corresponds to the opposite of the category of the finitely presentable objects of $\tx{Flat}(\C,\bo{Set})$. Now, if $\hat A\in\hat \C$ is a weak limit of a finite diagram $H$ from $\C$ (seen in $\hat\C$); then, since we can write $\hat A$ as a split quotient of some object $A$ of $\C$, it is easy to see that $A$ is actually a weak limit of $H$ in $\C$. Thus $\C$ is weakly lex.
	
	$(4)\Rightarrow(1)$. Consider the full subcategory $\D$ of $[\C\op,\bo{Set}]$ spanned by those functors $M\colon\C\op\to\bo{Set}$ which arise as
	$$ YA\stackrel{q}{\twoheadrightarrow} M \stackrel{m}{\rightarrowtail} YB_1\times\cdots \times YB_n $$
	with $q$ a regular epimorphism and $m$ a monomorphism. Then $\D$ contains the representables as well as any finite limit of them; indeed, any finite limit of representables is covered by a representable since $\C$ is weakly lex, and is a subobject of a finite product of representables by finiteness of the limit itself. To conclude it is then enough to prove that $\D\subseteq \Phi^\diamond_{\tx{reg}}[\C]$ and apply Theorem~\ref{main}.\\
	Consider $M\in\D$ and a flat functor $F\colon \C\to \E$ into a regular category $\E$; we need to prove that $M*F$ exists in $\E$ and is preserved by any regular functor out of $\E$. Given a presentation for $M$ as above, we can see $M$ as the coequalizer of the kernel pair of $mq\colon YA\to YB_1\times\cdots \times YB_n$. By fully faithfulness of the Yoneda embedding there exist maps $h_i\colon A\to B_i$ in $\C$ for which $(Yh_1,\cdots,Yh_n)=mq$. Now, it follows that the kernel pair of $mq$ can be rewritten as a finite limit of representables; since $F$ is flat, the colimit functor $-*F$ preserves that limit and hence $M*F$ corresponds to the coequalizer of the kernel pair of $(Fh_1,\cdots,Fh_n)\colon FA\to FB_1\times\cdots\times FB_n$. Such colimit exists in any regular category and is preserved by any regular functor.
\end{proof}

Note that the category $\D$ described above is regular whenever $\C$ is weakly lex (by \cite[Proposition~9]{CV98:articolo}); since it also contains the representables and is contained in $\Phi^\diamond_{\tx{reg}}[\C]$, then we necessarily have equalities $$\D=\Phi^\diamond_{\tx{reg}}[\C]= \Phi_{\tx{reg}}(\C);$$
showing that $\D$ is the free regular completion of $\C$.

\begin{obs}
	Proposition~\ref{free-exact} above can be extended to the setting of categories enriched over a symmetric monoidal finitary variety $\V$ as in \cite{LT20:articolo}. The notions of regularity and exactness corresponding to points (1) and (2) are those introduced in \cite[Section~5]{LT20:articolo}; the limits involved in (3) are products and powers by regular projective objects; while the notion of enriched weak limit to consider in (4) is that appearing in \cite[Section~5]{LT22:limits} (where $\E$ and $\G$ are as in \cite[Example~4.47]{LT22:limits}). The proof extends without major modifications using the duality \cite[Theorem~4.24]{Ten22:duality}. 
\end{obs}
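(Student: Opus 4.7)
The plan is to run the same four implications as in the unenriched Proposition~\ref{free-exact}, throughout replacing $\bo{Set}$ by $\V_0$, ordinary flatness by the $\V$-enriched notion of Definition~\ref{flat}, the $\Phi_{\tx{reg}}$/$\Phi_{\tx{ex}}$-exact categories by their $\V$-enriched analogues from \cite[Section~5]{LT20:articolo}, and ordinary weak finite limits by the enriched weak limits of \cite[Section~5]{LT22:limits} (taking the cover class $\E$ to be the regular epimorphisms and $\G$ the class of regular projective finitely presentable objects of $\V_0$, as in \cite[Example~4.47]{LT22:limits}). The implication $(1)\Rightarrow(2)$ is immediate from $\C\subseteq\Phi^\diamond_{\tx{reg}}[\C]\subseteq\Phi^\diamond_{\tx{ex}}[\C]$ together with Theorem~\ref{main}, both of which are base-independent: finite limits in $[\C\op,\V]$ of $\C$-diagrams that compute in $\Phi^\diamond_{\tx{reg}}[\C]$ automatically compute in $\Phi^\diamond_{\tx{ex}}[\C]$. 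For $(2)\Rightarrow(3)$, the universal property yields $\tx{Flat}(\C,\V)\simeq\V\tx{-Reg}(\Phi_{\tx{ex}}(\C),\V)$; in a symmetric monoidal finitary variety, regular epimorphisms in $\V_0$ are stable under small products (they are surjective on underlying sets) and under cotensors by regular projectives (projectivity of $P$ makes $[P,-]$ preserve regular epimorphisms). Hence the right-hand $\V$-category is closed in the presheaf $\V$-category under precisely the limits of $(3)$, and this closure transports across the equivalence.

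For $(3)\Rightarrow(4)$, the key input is the duality of \cite[Theorem~4.24]{Ten22:duality}: this identifies $\hat\C$ with (the opposite of) the full subcategory of finitely presentable objects of $\tx{Flat}(\C,\V)$, and it translates the existence of products and powers by regular projectives on the accessible side into the existence of enriched weak finite limits on the small side, in the sense of \cite[Section~5]{LT22:limits} with $\E,\G$ as chosen above. Transferring back from $\hat\C$ to $\C$ is the standard idempotent-splitting trick: an enriched weak limit $\hat A\in\hat\C$ of a finite diagram $H\colon\D\to\C$ is presented by a cover $\hat A\twoheadrightarrow\lim YH$ in $[\C\op,\V]$; writing $\hat A$ as a retract of some $A\in\C$ and precomposing with the splitting produces a cover with domain in $\C$, exhibiting $A$ itself as an enriched weak limit of $H$ in $\C$.

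For $(4)\Rightarrow(1)$, I take $\D$ to be the full $\V$-subcategory of $[\C\op,\V]$ spanned by weights $M$ admitting a presentation
$$YA\stackrel{q}{\twoheadrightarrow}M\stackrel{m}{\rightarrowtail}\{P,YB_1\times\cdots\times YB_n\}$$
with $q$ a regular epimorphism, $m$ a monomorphism, and $P$ a finitely presentable object of $\V_0$. By Kelly's reduction of finite weighted limits to finite conical limits and finite powers, every finite enriched virtual limit of representables is a subobject of such a power of a finite product of representables, and the enriched weak completeness of $\C$ provides a regular-epi cover out of a representable, so $\C$ and all finite enriched virtual limits of $\C$-diagrams lie in $\D$. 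To see $\D\subseteq\Phi^\diamond_{\tx{reg}}[\C]$, given a flat $F\colon\C\to\E$ into a regular $\V$-category, the weight $M$ is the coequalizer of the kernel pair of $mq$, which by the enriched Yoneda lemma is itself a finite weighted limit of representables; flatness of $F$ sends that limit to its expected value in $\E$, so $M*F$ is realized as the regular-epi/mono image of the induced map $FA\to\{P,FB_1\times\cdots\times FB_n\}$, which exists and is preserved by every regular $\V$-functor. Theorem~\ref{main} then yields the free regular completion.

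The main obstacle is $(3)\Rightarrow(4)$: one must verify that the duality \cite[Theorem~4.24]{Ten22:duality} pairs \emph{exactly} products together with powers by regular projectives on the accessible side with precisely the enriched weak finite limits of \cite[Example~4.47]{LT22:limits} on the small side, so that the assumed limit structure on $\tx{Flat}(\C,\V)$ dualises to the intended class of virtual limits in $\hat\C$. Once this compatibility between the classes $\E$, $\G$ and the soundness doctrine of \cite{ABLR02:articolo,LT22:virtual} is in place, the remaining implications are routine enriched rewrites of the $\V=\bo{Set}$ proof, invoking only standard facts about finite weighted limits and regular epimorphisms in symmetric monoidal finitary varieties.
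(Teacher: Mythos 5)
Your proposal is correct and follows essentially the same route as the paper: the remark's entire "proof" is the assertion that the four implications of Proposition~\ref{free-exact} go through verbatim after substituting the enriched notions of regularity/exactness, the limits "products and powers by regular projectives", and the enriched weak limits of \cite[Section~5]{LT22:limits}, with the duality \cite[Theorem~4.24]{Ten22:duality} replacing the $\bo{Set}$-based duality in the step $(3)\Rightarrow(4)$ — which is exactly the substitution scheme you carry out, including the idempotent-splitting transfer from $\hat\C$ to $\C$ and the enriched version of the subcategory $\D$ in $(4)\Rightarrow(1)$.
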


To conclude that our regular and exact completions have the same universal properties as the regular and exact completions for a weakly lex category considered in \cite{Hu96:articolo,CV98:articolo}, we need to show that our notion of flatness coincides with Carboni and Vitale's notion of left covering functor, when the domain category is weakly lex. 

Let us first recall the definition, which was first given in Vitale's PhD thesis \cite{vitale1994left}.

\begin{Def}
	Let $F\colon\C\to\E$ be a functor from a weakly lex category $\C$ to a regular category $\E$. We say that $F$ is {\em left covering} if for any finite diagram $H\colon\D\to \C$ and any weak limit $C\in\C$ of $H$, the comparison map
	$$ FC\twoheadrightarrow \lim (FH) $$%
	is a regular epimorphism.
\end{Def}

These were also defined independently by Hu, who called them {\em $\omega$-flat} in \cite{Hu96:articolo, hu1996note}, following a notation closer to ours.

\begin{prop}
	Let $\C$ be small and weakly lex. A functor $F\colon\C\to\E$ into a regular category $\E$ is flat if and only if it is left covering.
\end{prop}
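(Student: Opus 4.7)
For the forward direction, assume $F$ is flat and let $C$ be a weak limit of a finite diagram $H\colon\D\to\C$ with cone $\delta\colon\Delta C\to H$. Flatness (in its $\V=\bo{Set}$ incarnation from the remark after Definition~\ref{flat}) realizes $\lim FH$ as the colimit $\colim(F\pi\colon\C/H\to\E)$, and $(C,\delta)$ is weakly terminal in $\C/H$ by the weak universal property. First I would take the image factorization $FC\twoheadrightarrow I\hookrightarrow\lim FH$ in the regular category $\E$. Each cocone leg $FX\to\lim FH$ factors through $I$ via any chosen lift $g\colon X\to C$ with $\delta\circ\Delta g=e$; independence from the choice of $g$ follows because $I\hookrightarrow\lim FH$ is mono, and naturality in $(X,e)$ is immediate. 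Universality of the colimit then produces a retraction $\lim FH\to I$ of the inclusion, forcing it to be an isomorphism, so that $FC\to\lim FH$ is a regular epi.

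For the reverse direction, I would realize $(\lim YH)*F$ concretely as a coequalizer in $\E$. Fix a weak limit $C_1$ of $H$ with cone $\delta$ and form the finite diagram $H^*\colon\D^*\to\C$ obtained from $H$ by adjoining two new objects $\bullet_1,\bullet_2$, both sent by $H^*$ to $C_1$, with their arrows into each $Hd$ given by $\delta_d$. Since $\C$ is weakly lex, $H^*$ has a weak limit $P_0$ in $\C$, with two projections $p,q\colon P_0\to C_1$ that universally parametrize pairs of maps into $C_1$ inducing the same cone over $H$. A direct limit calculation identifies $\lim FH^*\cong FC_1\times_{\lim FH}FC_1$, the kernel pair of the comparison $FC_1\to\lim FH$, so applying left covering to both $H$ and $H^*$ yields regular epis $FC_1\twoheadrightarrow\lim FH$ and $FP_0\twoheadrightarrow FC_1\times_{\lim FH}FC_1$; in a regular category this presents $\lim FH$ as $\tx{coeq}(Fp,Fq\colon FP_0\rightrightarrows FC_1)$.

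Finally I verify that this coequalizer has the universal property of $\colim(F\pi)=(\lim YH)*F$. Weak terminality of $(C_1,\delta)$ lets any cocone $\{\phi_{(X,e)}\colon FX\to E\}$ be recovered from its single leg $\phi\colon FC_1\to E$ via $\phi_{(X,e)}=\phi\circ Fg$ for any lift $g$, and the only coherence condition is $\phi Fg_1=\phi Fg_2$ whenever $\delta\circ\Delta g_1=\delta\circ\Delta g_2$. By the weak universal property of $P_0$ every such pair is of the form $(pu,qu)$ for some $u$, so this coherence reduces to $\phi Fp=\phi Fq$. Cocones under $F\pi$ therefore correspond bijectively to maps out of $\tx{coeq}(Fp,Fq)=\lim FH$, proving $(\lim YH)*F\cong\lim FH$ and hence flatness of $F$. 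The main obstacle is the construction of the auxiliary diagram $H^*$ and the identification of $\lim FH^*$ with the kernel pair of $FC_1\to\lim FH$: this is the device that allows the reverse direction to produce the coequalizer presentation of $\lim FH$ in a (merely) regular, non-cocomplete $\E$.
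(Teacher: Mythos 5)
Your proof is correct, but it follows a genuinely different route from the paper's. The paper disposes of this proposition in two lines: Proposition~\ref{free-exact} (and the discussion after it) identifies the full subcategory $\D\subseteq[\C\op,\bo{Set}]$ of regular quotients of representables inside finite products of representables as the free regular completion of $\C$, and observes that this is also Carboni--Vitale's free regular completion of the weakly lex $\C$; both flatness and the left covering property are then characterized as ``the left Kan extension along $\C\hookrightarrow\D$ is regular'' --- the former by construction and Proposition~\ref{flat-prop}, the latter by \cite[Theorem~29]{CV98:articolo} --- so the two notions coincide. You instead verify the equivalence directly, diagram by diagram, essentially re-deriving the relevant portion of Carboni--Vitale's analysis. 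Your image-factorization argument for flat $\Rightarrow$ left covering is the standard one and is fine (the induced map $\lim FH\to I$ splits the mono $I\rightarrowtail\lim FH$, which is therefore invertible). The real content is your converse, and the key device --- adjoining two copies of the weak limit $C_1$ to $H$ to form a finite diagram $H^*$ with $\lim FH^*\cong FC_1\times_{\lim FH}FC_1$, so that left covering applied to $H$ and $H^*$ presents $\lim FH$ as $\tx{coeq}(Fp,Fq\colon FP_0\rightrightarrows FC_1)$, which weak terminality of $(C_1,\delta)$ and the weak universal property of $P_0$ then identify with $\colim(F\pi)$ --- is exactly what is needed to exhibit the colimit $(\lim YH)*F$ inside a merely regular, non-cocomplete $\E$. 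What your approach buys is self-containedness: no appeal to the universal property of free regular completions or to \cite[Theorem~29]{CV98:articolo}. What the paper's approach buys is brevity and the simultaneous identification of $\D$ with $\Phi_{\tx{reg}}(\C)$, which it needs elsewhere. The only steps worth spelling out in a full write-up are that $p$ and $q$ really are a parallel pair in $\C/H$ over $(C_1,\delta)$ (so that $\phi Fp=\phi Fq$ is a necessary condition on cocones, not just a sufficient one), and that $\tx{coeq}(Fp,Fq)=\tx{coeq}(\pi_1,\pi_2)$ because the comparison $FP_0\twoheadrightarrow FC_1\times_{\lim FH}FC_1$ is epimorphic; both are implicit in what you wrote.
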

\begin{proof}
	By the proposition above $\C$ has a free regular category which can be identified with the full subcategory $\D$ of $[\C\op,\bo{Set}]$ just described. This also coincides with the free regular completion of the weakly lex $\C$ in the sense of \cite{CV98:articolo}. Thus flat functors out of $\C$ and left covering functors coincide since they both can be described as those whose left Kan extension along the inclusion $\C\hookrightarrow\D$ is regular (by definition and by \cite[Theorem~29]{CV98:articolo}).
\end{proof}

Thus we are able to capture regular and exact completions with respect to weakly lex categories as part of this framework.

\subsection{Multilimits and infinitary lextensive categories}\label{ilext}$ $

Let $\Phi_{\tx{ilext}}$ be the classes of lex weights for infinitary lextensive categories (see the infinite version of \cite[5.3]{GL12:articolo}). In this setting $\Phi_{\tx{ilext}}$-lex colimits are just coproducts; thus a functor is $\Phi_{\tx{ilext}}$-exact if and only if it is lex and preserves coproducts. Given infinitary lextensive categories $\E$ and $\F$, denote by $\tx{iLext}(\E,\F)$ the full subcategory of $[\E,\F]$ spanned by the $\Phi_{\tx{ilext}}$-exact functors.

We recall the notion of multilimit below:

\begin{Def}[\cite{Diersmultialge}]
	We say that a diagram $H\colon\D\to \C$ has a {\em multilimit} in $\C$ if there exists a family of objects $(C_i)_{i\in I}$ in $\C$ together with cones $\delta_i\colon\Delta C_i\to H$ such that every other cone $e\colon\Delta E\to H$ factors as $e=\delta_i\circ f$ for a unique $i\in I$ and a unique $f\colon E \to C_i$ in $\C$. We say that $\C$ has {\em finite multilimits} if it has all multilimits indexed by finite diagrams.
\end{Def}

Unlike weak limits, multilimits are unique up to isomorphism. Moreover, a family $(C_i)_{i\in I}$ is the multilimit of $H$ if and only if there exists an isomorphism $$\sum_i\C(-,C_i)\cong\lim YH$$ in $[\C\op,\bo{Set}]$, where $Y\colon\C\to[\C\op,\bo{Set}]$ is the Yoneda embedding.

In the Proposition below Cauchy completeness is required for the implication $(2)\Rightarrow(3)$ to be true. This additional hypothesis appears here, but not in Proposition~\ref{free-exact}, since the class $\Phi_{\tx{ex}}$ considered before subsumes absolute coequalizers (that is, all Cauchy colimits) while $\Phi_{\tx{ilext}}$ does not.

\begin{prop}\label{ilext-prop}
	The following are equivalent for a small Cauchy complete category $\C$:\begin{enumerate}\setlength\itemsep{0.25em}
		\item $\C$ has a free infinitary lextensive completion;
		\item $\tx{Flat}(\C,\bo{Set})$ has connected limits;
		\item $\C$ has finite multilimits.
	\end{enumerate}
\end{prop}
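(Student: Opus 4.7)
The plan is to prove the cycle $(1)\Rightarrow(2)\Rightarrow(3)\Rightarrow(1)$, mirroring the structure of Proposition~\ref{free-exact} but with finite multilimits replacing weak finite limits and connected limits replacing products. The key fact specific to this case is that connected limits commute with small coproducts in $\bo{Set}$.

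For $(1)\Rightarrow(2)$, I would use the defining equivalence $\tx{Flat}(\C,\bo{Set})\simeq\tx{iLext}(\Phi_l\C,\bo{Set})$. Since connected limits commute with small coproducts in $\bo{Set}$, and trivially with all limits, any pointwise connected limit of infinitary lextensive functors into $\bo{Set}$ is again infinitary lextensive. Hence $\tx{iLext}(\Phi_l\C,\bo{Set})$ is closed in $[\Phi_l\C,\bo{Set}]$ under connected limits, and $\tx{Flat}(\C,\bo{Set})$ inherits them through the equivalence.

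For $(2)\Rightarrow(3)$, I would invoke the duality of~\cite{Ten22:duality} for finitely accessible categories. The category $\tx{Flat}(\C,\bo{Set})$ is finitely accessible, and Cauchy completeness of $\C$ ensures that every retract of a representable is itself representable, so its full subcategory of finitely presentable objects is equivalent to $\C\op$. The relevant duality theorem then translates the existence of connected limits in $\tx{Flat}(\C,\bo{Set})$ into the existence of finite multilimits in $\C$. I expect this to be the main obstacle, since it requires the external machinery of~\cite{Ten22:duality} and the correct identification of the finitely presentable objects (the reason Cauchy completeness is imposed here, unlike in Proposition~\ref{free-exact} where $\Phi_{\tx{ex}}$ already subsumes Cauchy colimits).

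Finally, for $(3)\Rightarrow(1)$ I would apply Theorem~\ref{main} by showing that $\Phi^\diamond_{\tx{ilext}}[\C]$ contains every finite limit of representables. Given a finite diagram $H\colon\D\to\C$ with multilimit $(C_i)_{i\in I}$, one has an isomorphism
$$\lim YH\cong\sum_{i\in I}\C(-,C_i)$$
in $[\C\op,\bo{Set}]$. For any flat $F\colon\C\to\E$ into an infinitary lextensive $\E$, the colimit $(\lim YH)*F$ then exists in $\E$ and equals $\sum_{i\in I}FC_i$; as infinitary lextensive functors preserve small coproducts and finite limits, such colimits are preserved by every $\Phi_{\tx{ilext}}$-exact functor out of $\E$. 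Hence $\lim YH\in\Phi^\diamond_{\tx{ilext}}[\C]$, so $\Phi^\diamond_{\tx{ilext}}[\C]$ has all finite limits of diagrams landing in $\C$, and Theorem~\ref{main} yields condition $(1)$.
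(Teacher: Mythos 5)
Your proposal is correct and follows essentially the same route as the paper: the same cycle $(1)\Rightarrow(2)\Rightarrow(3)\Rightarrow(1)$, with $(1)\Rightarrow(2)$ via commutation of connected limits with coproducts in $\bo{Set}$, $(2)\Rightarrow(3)$ via Diers duality (where Cauchy completeness identifies the finitely presentable objects of $\tx{Flat}(\C,\bo{Set})$ with $\C\op$), and $(3)\Rightarrow(1)$ via Theorem~\ref{main}. The only cosmetic difference is that the paper packages the last step by observing $\tx{Fam}(\C)\subseteq\Phi^\diamond_{\tx{ilext}}[\C]$ before noting that finite multilimits put every $\lim YH$ inside $\tx{Fam}(\C)$, whereas you verify $\lim YH\in\Phi^\diamond_{\tx{ilext}}[\C]$ directly.
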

\begin{proof}
	For $(1)\Rightarrow(2)$, if the free infinitary lextensive completion of $\C$ exists then $$\tx{Flat}(\C,\bo{Set})\simeq\tx{iLext}(\Phi_{\tx{ilext}}(\C),\bo{Set})$$ has connected limits since these commute in $\bo{Set}$ with coproducts. Regarding $(2)\Rightarrow(3)$, the category $\C$ has finite multilimits by Diers duality for locally finitely multipresentable categories \cite{Die80:articolo} which requires Cauchy completeness (see also \cite[Theorem~3.8]{Ten22:duality}).
	
	$(3)\Rightarrow(1)$. Let $\tx{Fam}(\C)$ be the closure of the representables in $[\C\op,\bo{Set}]$ under small coproducts (that is, the coproduct completion of $\C$); then $\tx{Fam}(\C)\subseteq \Phi_{\tx{ilext}}^\diamond[\C]$ since, independently of the colimits being along a flat diagram, coproducts exist in any infinitary lextensive category and are preserved by any infinitary lextensive functor by definition.
	
	Thus, if $\C$ has finite multilimits then $\Phi_{\tx{lext}}^\diamond[\C]$ has finite limits of representables and the free infinitary lextensive completion exists by theorem~\ref{main}.
\end{proof}

Note that then $\tx{Fam}(\C)$ is the free infinitary lextensive completion of $\C$. In fact, whenever $\C$ has finite multilimits then $\tx{Fam}(\C)$ is lex (see for instance \cite[Corollary~3.9]{Ten22:duality}), and hence an infinitary lextensive category.

\begin{Def}[\cite{Diersmultialge}]
	Let $F\colon\C\to\E$ be a functor from a category $\C$ with finite multilimits into a lex category $\E$ with coproducts. We say that $F$ is {\em finitely multicontinuous} if for any finite diagram $H\colon \D\to\C$ with multilimit $(C_i)_{i\in I}$ there is an isomorphism
	$$ \sum_{i\in I}FC_i\cong \lim FH $$%
	induced by the image of the limiting cones.
\end{Def}

\begin{prop}\label{flat-multi}
	Let $\C$ be small and have finite multilimits. A functor $F\colon\C\to\E$ into an infinitary lextensive category $\E$ is flat if and only if it is finitely multicontinuous.
\end{prop}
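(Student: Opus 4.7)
The plan is to reduce both properties to a single computation involving the identification $\lim YH \cong \sum_{i\in I}\C(-,C_i)$ in $[\C\op,\bo{Set}]$, which is precisely the defining property of $(C_i)_{i\in I}$ being a multilimit of $H$. Since $\V = \bo{Set}$ here, by the Remark following Definition~\ref{flat}, flatness of $F$ is equivalent to asking that, for every finite diagram $H\colon\D\to\C$, the colimit $\lim YH * F$ exists in $\E$ and the comparison map $\lim YH * F\to \lim FH$ is an isomorphism.

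First, I would compute $\lim YH * F$ using the multilimit description of $\lim YH$. Since $\E$ is infinitary lextensive it has all (small) coproducts, and colimits commute with colimits, so
$$ \lim YH * F \;\cong\; \Big(\sum_{i\in I}\C(-,C_i)\Big) * F \;\cong\; \sum_{i\in I}\bigl(\C(-,C_i) * F\bigr) \;\cong\; \sum_{i\in I} FC_i, $$
using at the last step that colimits weighted by representables are just values of the diagram. In particular, the colimit $\lim YH * F$ always exists in $\E$.

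Next, I would identify the comparison map. Under the isomorphism $\lim YH \cong \sum_{i}\C(-,C_i)$, each summand $\C(-,C_i)\hookrightarrow\lim YH$ corresponds, by Yoneda, to the limiting cone $\delta_i\colon \Delta C_i\to H$; applying $F$ to $\delta_i$ produces a cone $\Delta FC_i\to FH$ and hence a map $FC_i\to\lim FH$. These assemble into a canonical morphism $\sum_{i\in I}FC_i\to \lim FH$, which is exactly the multicomparison map appearing in the definition of finite multicontinuity. On the other hand, under the same identifications, this morphism coincides with the flatness comparison $\lim YH * F\to \lim FH$ by the universal properties involved.

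From these two observations both directions become immediate: if $F$ is flat, the flatness comparison is an iso, hence so is $\sum_i FC_i\to\lim FH$, proving that $F$ is finitely multicontinuous; conversely, if $F$ is finitely multicontinuous, the multicomparison map is an iso for every finite $H$, hence the flatness comparison is an iso, and $F$ is flat. There is no real obstacle here; the only point that requires a touch of care is checking that the canonical comparison map provided by the definition of flatness really does match the multicomparison map up to the isomorphism $\lim YH\cong \sum_i \C(-,C_i)$, which is a straightforward verification from the universal properties.
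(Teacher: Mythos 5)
Your proposal is correct and follows essentially the same route as the paper: both rewrite the flatness comparison $(\lim YH)*F\to\lim FH$ using the multilimit decomposition $\lim YH\cong\sum_i\C(-,C_i)$ and compute $\bigl(\sum_i\C(-,C_i)\bigr)*F\cong\sum_i FC_i$. Your extra care in matching the flatness comparison with the multicomparison map is a detail the paper leaves implicit, but it is the same argument.
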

\begin{proof}
	By Proposition~\ref{flat-prop}, $F$ is flat if and only if for any finite diagram $H\colon\D\to \C$ there is an isomorphism
	$$ \lim FH\cong  (\lim YH)*F $$%
	Now, since $\C$ has finite multilimits, $\lim YH\cong \sum_i\C(-,C_i)$ with $(C_i)_{i\in I}$ the multilimit of $H$; therefore 
	$$ (\lim YH)*F \cong \sum_i\C(-,C_i)*F\cong \sum_i FC_i. $$%
	This shows that $F$ is flat if and only if it is finitely multicontinuous.
\end{proof}

\begin{obs}
	All this can be captured in the context of Section~\ref{Psi-virtual}, where we discuss generalizations to the enriched context.
\end{obs}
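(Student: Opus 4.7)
The plan is to reinterpret Propositions~\ref{ilext-prop} and~\ref{flat-multi} as instances of a general $\V$-enriched statement using the $\Psi$-virtual-limit framework of Section~\ref{Psi-virtual}, where the class $\Psi$ is chosen so that its virtual limits encode the enriched analogue of finite multilimits.

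First I would identify the appropriate class $\Psi_{\tx{mult}}$: a finite $\V$-weighted limit $\{N,YH\}$ in $[\C\op,\V]$ should admit a $\Psi_{\tx{mult}}$-virtual limit in $\C$ precisely when it is isomorphic to a small coproduct $\sum_{i\in I}\C(-,C_i)$ of representables. This is the evident enriched generalisation of the characterisation $\lim YH \cong \sum_i \C(-,C_i)$ that defined multilimits in the $\bo{Set}$-based setting of Section~\ref{ilext}, and it is the notion that Section~\ref{Psi-virtual} packages uniformly via virtual limits.

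Next I would verify that, under this identification, Proposition~\ref{ilext-prop} extends mutatis mutandis. The implication $(1)\Rightarrow(2)$ uses only that $\V$-connected limits commute with coproducts in $\V$, which follows from local finite presentability of $\V_0$ as a closed category. The implication $(2)\Rightarrow(3)$ would invoke an enriched version of the duality theorem for locally finitely multipresentable $\V$-categories in the style of \cite[Theorem~3.8]{Ten22:duality}, which is where enriched Cauchy completeness enters. The reverse direction proceeds as before, showing that the $\V$-enriched coproduct completion $\tx{Fam}(\C)$ sits inside $\Phi_{\tx{ilext}}^\diamond[\C]$ since the coproducts involved are absolute $\Phi_{\tx{ilext}}$-lex colimits, so that Theorem~\ref{main} applies.

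Finally, the enriched version of Proposition~\ref{flat-multi} reduces to a direct computation via Proposition~\ref{flat-prop}: flatness of $F\colon\C\to\E$ is equivalent to $\{N,YH\}*F \cong \{N,FH\}$ for each finite weight $N$, and under the decomposition $\{N,YH\}\cong \sum_i\C(-,C_i)$ this becomes $\sum_i FC_i \cong \{N,FH\}$, which is the enriched notion of $F$ being finitely multicontinuous. The main obstacle, in my view, is to formulate carefully what an enriched finite multilimit actually is ---in particular how it interacts with powers by finitely presentable objects of $\V_0$, which contribute to finite weighted limits in the enriched setting but have no direct counterpart in the ordinary multilimit literature--- and then to check that the notion so formulated matches precisely the $\Psi_{\tx{mult}}$-virtual-limit notion of Section~\ref{Psi-virtual}, so that the above arguments transfer without further work.
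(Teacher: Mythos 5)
Your proposal is correct and matches what the paper actually does: Section~\ref{Psi-virtual} takes $\Psi$ to be the weakly sound class of connected limits (so that $\Psi^+$-colimits are small coproducts, $\Psi^+(\C)=\tx{Fam}(\C)$, and finite $\Psi^+$-virtual completeness is exactly the existence of finite multilimits), and its general proposition then specializes to Propositions~\ref{ilext-prop} and~\ref{flat-multi}. The only cosmetic difference is that you reverse-engineer the class from its virtual limits, whereas the paper starts from $\Psi=$ connected limits and derives $\Psi^+$; your worry about enriched multilimits and powers is already absorbed by the definition of $\Psi^+$-virtual limit, which quantifies over all finite weights.
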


\subsection{Fc-limits and pretopoi}\label{fc}$ $

Let $\Phi_{\tx{pret}}=\Phi_{\tx{ex}}\cup \Phi_{\tx{lext}}$ be the classes of lex weights for pretopoi. A functor is $\Phi_{\tx{pret}}$-exact if and only if it is lex and preserves finite coproducts and regular epimorphisms; these are usually called {\em models}. Given pretopoi $\E$ and $\F$, denote by $\tx{Mod}(\E,\F)$ the full subcategory of $[\E,\F]$ spanned by the models of $\E$ in $\F$.

The notion of fc-limit was first considered in \cite[Definition~3.1]{karazeris2005completeness} (with the name fm-limit), and can be thought as a generalization of that of weak limit and multi-finite limit. See also \cite{beke2005flatness} where fc-limits and colimits are used in the context of flat functors.

\begin{Def}[\cite{karazeris2005completeness}]
	We say that a diagram $H\colon\D\to \C$ has a {\em fc-limit} in $\C$ if there exists a finite family of objects $(C_i)_{i\leq n}$ in $\C$ together with cones $\delta_i\colon\Delta C_i\to H$ such that every other cone $e\colon\Delta E\to H$ factors as $e=\delta_i\circ f$ for some $i\in I$ and $f\colon E \to C_i$ in $\C$. We say that $\C$ has {\em finite fc-limits} if it has all fc-limits indexed by finite diagrams.
\end{Def}

Equivalently, $(C_i)_{i\leq n}$ is a fc-limit of $H$ if there exists a regular epimorphism $$\sum_{i\leq n}\C(-,C_i)\twoheadrightarrow\lim YH$$ in $[\C\op,\bo{Set}]$. Like weak limits, fc-limits are not uniquely determined by the universal property. 

The existence of finite fc-limit is what allows to consider free pretopos completions. Note that, by Corollary~\ref{flat-ultraproducts}, the ultraproducts of $\tx{Flat}(\C,\bo{Set})$ in point $(2)$ below satisfy an universal property in the sense of Definition~\ref{univ-ultra}. 

As we outline in the proof, the equivalence of $(2)$ and $(3)$ was first shown in \cite{beke2005flatness}, while $(3)\Rightarrow(1)$ is \cite[Theorem~5.3]{kar04}.

\begin{prop}\label{free-pretopos}
	The following are equivalent for a small category $\C$:\begin{enumerate}\setlength\itemsep{0.25em}
		\item $\C$ has a free pretopos completion;
		\item $\tx{Flat}(\C,\bo{Set})$ is closed under ultraproducts in $[\C,\bo{Set}]$;
		\item $\C$ has finite fc-limits.
	\end{enumerate}
\end{prop}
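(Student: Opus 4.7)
The plan is to prove the cyclic chain $(1)\Rightarrow(2)\Rightarrow(3)\Rightarrow(1)$. For $(1)\Rightarrow(2)$, the universal property yields an equivalence $\tx{Flat}(\C,\bo{Set})\simeq\tx{Mod}(\Phi_{\tx{pret}}(\C),\bo{Set})$ via left Kan extension along $K$, with restriction as inverse. Ultraproducts in $\bo{Set}$ commute with finite limits, finite coproducts, and regular epimorphisms, so the category of models is closed under ultraproducts in $[\Phi_{\tx{pret}}(\C),\bo{Set}]$. Since restriction along $K$ is computed pointwise, it commutes with these ultraproducts, and the closure transfers to $\tx{Flat}(\C,\bo{Set})$ inside $[\C,\bo{Set}]$. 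The equivalence $(2)\Leftrightarrow(3)$ is established in \cite{beke2005flatness} (see also \cite{kar04,shulman2012exact} and Remark~\ref{kar}).

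For $(3)\Rightarrow(1)$ I apply Theorem~\ref{main} by verifying condition $(4)$. Following the pattern of Propositions~\ref{free-exact} and~\ref{ilext-prop}, let $\D\subseteq[\C\op,\bo{Set}]$ be the full subcategory of presheaves $M$ admitting a factorization
$$\sum_{i\leq n}YA_i\stackrel{q}{\twoheadrightarrow} M\stackrel{m}{\rightarrowtail}\prod_{j\leq k}YB_j$$
with $q$ a regular epimorphism and $m$ a monomorphism. Every finite limit $\lim YH$ of representables belongs to $\D$: the monomorphism into the finite product $\prod_{d}YH(d)$ is canonical, while the fc-limit $(C_i)_{i\leq n}$ of $H$ in $\C$ supplies a regular epimorphism $\sum_iYC_i\twoheadrightarrow\lim YH$. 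Once I verify $\D\subseteq\Phi_{\tx{pret}}^\diamond[\C]$, condition $(4)$ of Theorem~\ref{main} follows and the free pretopos completion exists.

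The containment $\D\subseteq\Phi_{\tx{pret}}^\diamond[\C]$ is the heart of the argument. Given $M\in\D$ as above and a flat $F\colon\C\to\E$ into a pretopos $\E$, Yoneda converts $\sum YA_i\to\prod YB_j$ into a family $h_{ij}\colon A_i\to B_j$, producing a morphism $\phi\colon\sum_iFA_i\to\prod_jFB_j$ in $\E$. The claim is that $M*F$ exists in $\E$ as the image of $\phi$ and is preserved by every pretopos-exact functor. To justify this I embed $\E$ fully faithfully into a small-exact $\T$ via a pretopos-exact $J$ as in Definition~\ref{exact}, where by Proposition~\ref{V-topos} the functor $\tx{Lan}_Y(JF)$ is cocontinuous and lex. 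By lextensivity of $[\C\op,\bo{Set}]$, the kernel pair of $\sum YA_i\to\prod YB_j$ decomposes as the finite coproduct $\sum_{i,i'}YA_i\times_{\prod YB_j}YA_{i'}$ of finite limits of representables; flatness of $F$ sends these pullbacks to the analogous pullbacks in $\E$, so the kernel pair of $\phi$ matches. A universal-property argument then identifies $M*F$ with the image of $\phi$, and preservation by any model $G\colon\E\to\F$ is immediate since $G$ preserves finite limits, finite coproducts, and images. The main obstacle is exactly this identification-and-preservation step: the subtle point is that composing a flat functor with a pretopos-exact functor need not a priori yield a flat functor, so the matching of abstract weighted colimits with concrete image factorizations must be engineered through the small-exact embedding, where Proposition~\ref{V-topos} provides the required lex cocontinuity.
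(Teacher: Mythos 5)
Your overall architecture matches the paper's: the same cycle $(1)\Rightarrow(2)\Rightarrow(3)\Rightarrow(1)$, the same pointwise-restriction argument for $(1)\Rightarrow(2)$, the appeal to \cite{beke2005flatness} for $(2)\Rightarrow(3)$, and for $(3)\Rightarrow(1)$ the same subcategory $\D$ of images of maps $\sum_i YA_i\to\prod_j YB_j$, verified to contain $\C_l$ and to lie in $\Phi^\diamond_{\tx{pret}}[\C]$. Your $(i,i')$-indexed decomposition of the kernel pair is in fact more precise than the paper's phrasing. However, there is a genuine gap at the step you yourself flag as the heart of the argument. You justify the identification $M*F\cong\tx{im}(\phi)$ by embedding $\E$ into a small-exact $\T$ and invoking Proposition~\ref{V-topos} to get that $\tx{Lan}_Y(JF)$ is lex. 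But Proposition~\ref{V-topos} gives lexness of $\tx{Lan}_Y(JF)$ only when $JF$ is flat, and flatness of $JF$ is exactly the thing you correctly observe is not a priori available: it is Lemma~\ref{flat-pres}(2), whose hypothesis --- that $\Phi^\diamond[\C]$ has finite limits of diagrams in $\C$ --- is precisely the conclusion $(3)\Rightarrow(1)$ is trying to establish. As written, the central step is circular. Nor can one patch it by computing $P_{i,i'}*JF$ in $\T$ first: identifying that colimit with the pullback of the $JFh_{i,j}$ is again flatness of $JF$.

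The paper avoids the embedding entirely at this point and argues directly in $\E$: write $M$ as the coequalizer of the kernel pair of $mq\colon\sum_iYA_i\to\prod_jYB_j$, note that this kernel pair is a finite coproduct of finite limits of representables, apply Definition~\ref{flat} to $F$ itself to see that each such finite limit of representables has its weighted colimit in $\E$ given by the corresponding pullback of the $Fh_{i,j}$, and then assemble $M*F$ by interchange of colimits as a finite coproduct followed by a coequalizer of a kernel pair --- operations that exist in any pretopos and are preserved by any pretopos functor. This gives both existence and preservation without ever needing $JF$ or $GF$ to be flat. Your argument becomes correct if you delete the detour through $\T$ and Proposition~\ref{V-topos} and replace the ``universal-property argument'' with this explicit interchange-of-colimits computation; the small-exact embedding is genuinely needed elsewhere in the paper (e.g.\ in Lemma~\ref{diamond-lex} and Theorem~\ref{main}), but not here.
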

\begin{proof}
	$(1)\Rightarrow(2)$. Let $\Phi_{\tx{pret}}(\C)$ be the free pretopos completion of $\C$; then  $$\tx{Flat}(\C,\bo{Set})\simeq\tx{Mod}(\Phi_{\tx{ex}}(\C),\bo{Set}).$$ In particular,  $\tx{Flat}(\C,\bo{Set})$ can be identified with a full subcategory of $\K:=[\Phi_{\tx{ex}}(\C),\bo{Set}]$ closed under ultraproducts. Since restricting along the inclusion $\C\hookrightarrow\Phi_{\tx{pret}}(\C)$ preserves all limits and colimits, it also preserves ultraproducts. Therefore $\tx{Flat}(\C,\bo{Set})$ is closed under ultraproducts $[\C,\bo{Set}]$.

	$(2)\Rightarrow(3)$. Let $\K=[\C,\bo{Set}]$, so that $\K_f$ coincides with the free cocompletion of $\C\op$ under finite colimits. Then the fact that $(2)$ implies $(3)$ is given by putting together Theorem~4.2 (with $\A=\K_f$) and Theorem~3.3 of \cite{beke2005flatness}.

	$(3)\Rightarrow(1)$. We generalize the argument in the proof of Proposition~\ref{free-exact}. Assume that $\C$ has finite fc-limits and consider the full subcategory $\D$ of $[\C\op,\bo{Set}]$ spanned by those functors $M\colon\C\op\to\bo{Set}$ which arise as
	$$ YA_1+\cdots+YA_n\stackrel{q}{\twoheadrightarrow} M \stackrel{m}{\rightarrowtail} YB_1\times\cdots \times YB_m $$
	with $q$ a regular epimorphism and $m$ a monomorphism. Then $\D$ contains the representables and and any finite limit of them; indeed, any finite limit of representables is covered by a finite coproduct of representable since $\C$ is has finite fc-limits, and is a subobject of a finite product of representables by finiteness of the limit itself. To conclude we need to show that $\D\subseteq \Phi^\diamond_{\tx{pret}}[\C]$, so that we can apply Theorem~\ref{main}.\\
	Consider $M\in\D$ and a flat functor $F\colon \C\to \E$ into a pretopos $\E$; we need to prove that $M*F$ exists in $\E$ and is preserved by any pretopos functor out of $\E$. Given a presentation for $M$ as above, we can see $M$ as the coequalizer of the kernel pair of $mq\colon \textstyle\sum_{i}YA_i\to\textstyle\prod_{j}YB_j$. Such kernel pair, being computed pointwise, is isomorphic to the coproduct of the kernel pairs of each $i$-component $(mq)_i\colon YA_i\to\textstyle\prod_{j}YB_j$, which in turn can be seen as a finite limit of representables involving maps $h_{i,j}\colon A_i\to B_j$. Now, $-*F$ preserves all colimits and any finite limit of representables (since $F$ is flat); thus $M*F$ can be computed as the coproduct, indexed on $i\leq n$, of the coequalizer of the kernel pairs of $(Fh_{i,1},\cdots,Fh_{i,n})\colon FA_i\to \textstyle\prod_j FB_j$. Such combination of limits and colimits exists in any pretopos and are preserved by any pretopos functor by definition.
\end{proof}

\begin{obs}\label{fc-refl}
	Following \cite[Section~2]{beke2005flatness}, we say that a fully faithful inclusion $J\colon\A\hookrightarrow\K$ is {\em fc-reflective} if for any $K\in\K$ there exists a finite family $(A_i)_{i\leq n}$ in $\A$ together with a regular epimorphism
	$$\sum_{i\leq n}\A(A_i,-)\twoheadrightarrow\K(K,J-).$$
	Then \cite[3.3]{beke2005flatness} says that the conditions of the proposition above are further equivalent to 
	\begin{enumerate}
		\item[\em (4)] {\em $\C\op$ is fc-reflective in $\tx{Fin}(\C\op)$, the free cocompletion of $\C\op$ under finite colimits.} 
	\end{enumerate}
	In general the inclusion $\tx{Flat}(\C\op,\bo{Set})\hookrightarrow[\C\op,\bo{Set}]$ is not fc-reflective \cite[Remark~4.6]{beke2005flatness}.
\end{obs}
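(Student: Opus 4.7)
The plan is to establish (4) by showing it is equivalent to condition (3) of Proposition~\ref{free-pretopos}; combined with that proposition, this yields the claimed equivalence with all four conditions. The strategy is essentially bookkeeping: identify $\tx{Fin}(\C\op)$ concretely as the opposite of the free completion of $\C$ under finite limits, and then check that fc-reflectivity becomes verbatim the fc-limit condition.

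First I would recall the standard identification $\tx{Fin}(\C\op)\simeq\tx{Fin}^\dagger(\C)\op$, where $\tx{Fin}^\dagger(\C)$ denotes the free completion of $\C$ under finite limits (as in the proof of Proposition~\ref{V-topos}); since $\V=\bo{Set}$, this can be realized concretely as the full subcategory of $[\C\op,\bo{Set}]$ spanned by the finite limits of representables, which is indeed closed under finite limits in $[\C\op,\bo{Set}]$ because finite limits commute with finite limits in $\bo{Set}$. Under this identification the inclusion $J\colon\C\op\hookrightarrow\tx{Fin}(\C\op)$ is the opposite of the embedding $Y\colon\C\hookrightarrow\tx{Fin}^\dagger(\C)$, $A\mapsto YA$, and a general object $K\in\tx{Fin}(\C\op)$ corresponds to some $L=\lim YH\in\tx{Fin}^\dagger(\C)$ for a finite diagram $H\colon\D\to\C$.

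Next I would translate the hom-functors appearing in the definition of fc-reflectivity into presheaves on $\C$. The covariant representable $\C\op(A,-)\colon\C\op\to\bo{Set}$ sends $B\mapsto\C(B,A)$, so as a presheaf on $\C$ it is exactly the representable $YA$. For $K\in\tx{Fin}(\C\op)$ corresponding to $L\in\tx{Fin}^\dagger(\C)$, the identification above together with Yoneda in $[\C\op,\bo{Set}]$ gives
$$ \tx{Fin}(\C\op)(K,JA) \;\cong\; \tx{Fin}^\dagger(\C)(YA,L) \;\cong\; L(A), $$
so that $\tx{Fin}(\C\op)(K,J-)$ and $L$ agree as presheaves on $\C$.

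With these translations in place, fc-reflectivity of $\C\op\hookrightarrow\tx{Fin}(\C\op)$ says exactly: for every $L\in\tx{Fin}^\dagger(\C)$ there is a finite family $C_1,\dots,C_n\in\C$ and a regular epimorphism $\sum_i YC_i\twoheadrightarrow L$ in $[\C\op,\bo{Set}]$. Since every object of $\tx{Fin}^\dagger(\C)$ arises as $\lim YH$ for some finite diagram $H$ in $\C$, this is verbatim the defining condition that $\C$ has finite fc-limits, i.e.\ condition~(3). The only real obstacle is notational --- carefully tracking the direction of arrows when passing between $\C$ and $\C\op$ and identifying covariant representables on $\C\op$ with presheaves on $\C$ --- but once this bookkeeping is done, the two statements are literal reformulations of one another and no further work is required.
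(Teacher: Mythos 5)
Your argument is sound in substance, but note that the paper does not actually prove this equivalence: it simply cites \cite[3.3]{beke2005flatness}. So what you have written is a self-contained reconstruction of the cited result via the implication chain $(4)\Leftrightarrow(3)$, which is a legitimate (and arguably preferable, because explicit) route. The core computation is correct: realizing $\tx{Fin}(\C\op)$ as the closure of the covariant representables under finite colimits in $[\C,\bo{Set}]$, every $K$ is a finite colimit $\colim_d\C(Hd,-)$, and the Yoneda lemma gives $\tx{Fin}(\C\op)(K,JA)\cong\lim_d\C(A,Hd)=(\lim YH)(A)$, so that $\K(K,J-)\cong\lim YH$ as a presheaf on $\C$ while $\A(A_i,-)\cong YA_i$; fc-reflectivity then becomes verbatim the existence, for each finite $H$, of a regular epimorphism $\sum_i YC_i\twoheadrightarrow\lim YH$ in $[\C\op,\bo{Set}]$, which is exactly the stated characterization of finite fc-limits.

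One intermediate claim, however, is wrong as stated and should be excised. You assert that $\tx{Fin}^\dagger(\C)$ ``can be realized concretely as the full subcategory of $[\C\op,\bo{Set}]$ spanned by the finite limits of representables,'' justified by commutation of finite limits with finite limits in $\bo{Set}$. That justification does not address the real issue: the canonical comparison from the free finite-limit completion to that full subcategory (the category called $\C_l$ in the paper) is essentially surjective but there is no reason for it to be fully faithful, since hom-sets out of a non-representable object $\lim YH$ of $[\C\op,\bo{Set}]$ are not computed by the colimit formula valid in the free completion; and the paper itself explicitly states that it is not known whether $\C_l$ is closed under finite limits. Fortunately your proof never needs this identification: the only hom-sets you compute are of the form $\tx{Fin}(\C\op)(K,JA)$, i.e.\ maps into a representable of $[\C,\bo{Set}]$, where Yoneda applies directly and gives the same answer in the free completion and in the presheaf category. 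Deleting that sentence leaves a correct argument.
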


It remains to characterize flat functors out of a category $\C$ with finite fc-limits. 

\begin{Def}[\cite{kar04}]
	Let $F\colon\C\to\E$ be a functor from category $\C$ with finite fc-limits to a pretopos $\E$. We say that $F$ is {\em finitely fc-continuous} if for any finite diagram $H\colon\D\to \C$ and any fc-limit $(C_i)_{i\leq n}$ of $H$ in $\C$, the comparison map
	$$ \sum_{i\leq n} FC_i\twoheadrightarrow  \lim (FH) $$%
	is a regular epimorphism.
\end{Def}

\begin{obs}
	The definition above was first considered in \cite[Section~5]{kar04} to characterize flat functors into a pretopos $\E$ with respect to the precanonical topology on it. See Remark~\ref{kar}
\end{obs}

\begin{prop}
	Let $\C$ be small and have finite fc-limits. A functor $F\colon\C\to\E$ into a pretopos $\E$ is flat if and only if it is finitely fc-continuous.
\end{prop}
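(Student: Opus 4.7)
The strategy is to combine Proposition~\ref{flat-prop} --- which reduces flatness to the statement that $(\lim YH)*F \cong \lim FH$ for every finite diagram $H\colon\D\to\C$ --- with the regular epimorphism $q\colon\sum_i YC_i \twoheadrightarrow \lim YH$ in $[\C\op,\bo{Set}]$ provided by an fc-limit $(C_i)_{i\leq n}$ of $H$. This mirrors the pattern of Proposition~\ref{flat-multi}, with $q$ playing here the role of the isomorphism $\sum\C(-,C_i)\cong \lim YH$ of the multilimit case. In these terms the two conditions to compare are: flatness asks for $(\lim YH)*F$ to exist and equal $\lim FH$, while fc-continuity asks for the comparison $\sum_i FC_i \to \lim FH$ to be a regular epimorphism.

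For the forward direction, assume $F$ is flat. I would fix a fully faithful pretopos embedding $J\colon\E\hookrightarrow\T$ into a small-exact $\T$; since $\Phi_{\tx{pret}}^\diamond[\C]$ has finite limits of diagrams in $\C$ (by Proposition~\ref{free-pretopos}), Lemma~\ref{flat-pres} ensures that $JF$ remains flat, and then Proposition~\ref{V-topos} makes $\tx{Lan}_Y(JF) = -*JF\colon [\C\op,\bo{Set}]\to\T$ both lex and cocontinuous. Applying this cocontinuous functor to $q$ yields a regular epi $\sum_i JFC_i \twoheadrightarrow (\lim YH)*JF$; flatness of $JF$ together with lex-ness of $J$ identifies the codomain with $J(\lim FH)$. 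Since $J$ is fully faithful and reflects regular epis, this descends to the required regular epi $\sum_i FC_i \twoheadrightarrow \lim FH$ in $\E$.

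For the backward direction, I would route through the universal property of the free pretopos completion rather than attempt a direct computation. By Proposition~\ref{free-pretopos}, $\Phi_{\tx{pret}}(\C)$ exists; and the construction of \cite[Theorem~5.3]{kar04} is built so that finitely fc-continuous functors $F\colon\C\to\E$ correspond, under left Kan extension along $K\colon\C\hookrightarrow\Phi_{\tx{pret}}(\C)$, to pretopos functors $\Phi_{\tx{pret}}(\C)\to\E$. Combining with Theorem~\ref{main}(2), which establishes the same correspondence for flat functors, both notions classify exactly the restrictions along $K$ of pretopos functors, and therefore coincide. The main obstacle lies in this direction: a direct argument would have to recognise the kernel pair of $\sum_i FC_i \twoheadrightarrow \lim FH$ computed in $\E$ as the $F$-image of the kernel pair of $q$, which is itself a coproduct of finite limits of representables; one would then need to propagate fc-continuity from $H$ to those smaller diagrams inductively, and the double universal property conveniently sidesteps this recursion.
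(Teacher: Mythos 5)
Your forward direction is essentially the paper's: the paper simply applies $-*F$ to the regular epimorphism $\sum_{i}\C(-,C_i)\twoheadrightarrow\lim YH$ and composes with the isomorphism $(\lim YH)*F\cong\lim FH$ supplied by flatness. Your detour through a small-exact embedding $J\colon\E\hookrightarrow\T$ (via Lemma~\ref{flat-pres} and Proposition~\ref{V-topos}) is a more careful justification that this application of $-*F$ makes sense and preserves the regular epimorphism when $\E$ is not cocomplete; it is consistent with how the paper handles the analogous point elsewhere, so that half is fine.

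The backward direction is where you genuinely diverge, and where your argument has a gap. The paper argues directly: it takes the full subcategory $\D\subseteq[\C\op,\bo{Set}]$ from the proof of Proposition~\ref{free-pretopos} (regular quotients of finite coproducts of representables sitting inside finite products of representables), notes that $\D$ is closed under finite limits, shows that fc-continuity of $F$ makes $\tx{Lan}_JF\colon\D\to\E$ left covering (for $J\colon\C\hookrightarrow\D$ the inclusion), and concludes that $\tx{Lan}_JF$ is lex by \cite[Proposition~20]{CV98:articolo}; since $\C_l\subseteq\D$, Proposition~\ref{flat-prop} then gives flatness. This is precisely the device that dissolves the recursion you flag at the end: the left-covering condition is tested against actual finite limits in the lex category $\D$, so one never has to propagate fc-continuity down to the kernel-pair diagrams. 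Your alternative --- matching the universal property of \cite[Theorem~5.3]{kar04} against Theorem~\ref{main}(2) --- is not wrong in spirit, but as stated it does not close: Karazeris's completion is built for $j$-flat functors relative to the precanonical topology (cf.\ Remark~\ref{kar}), so to conclude that flat and fc-continuous functors both classify \emph{restrictions along the same $K$} of pretopos functors, you must first identify his completion with $\Phi_l\C$ together with its inclusion $K$, and check that his equivalence holds for fc-continuous functors into an arbitrary pretopos in the paper's sense. That identification is essentially the content being proved, so the double universal property does not sidestep the work; it relocates it into an unverified compatibility between two a priori different completions.
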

\begin{proof}
	Let $\D$ be the full subcategory of $[\C\op,\bo{Set}]$ described in the proof above, and let $J\colon\C\to\D$ be the inclusion. Arguing as in \cite{CV98:articolo}, it is easy to see that $\D$ is closed in $[\C\op,\bo{Set}]$ under finite limits, and that if $F$ is finitely fc-continuous then $\tx{Lan}_JF$ is left covering. Thus $\tx{Lan}_JF$ is lex by \cite[Proposition~20]{CV98:articolo} and hence, since $\C_l\subseteq \D$, $F$ is flat.
	
	Conversely, assume that $F\colon\C\to\E$ is flat. For any finite diagram $H\colon\D\to \C$, and any fc-limit $(C_i)_{i\leq n}$ of $H$ we have a regular epimorphism $\textstyle\sum_{i\leq n}\C(-,C_i)\twoheadrightarrow\lim YH$. Applying the functor $-*F$ we obtain a regular epimorphism
	$$ \sum_{i\leq n}FC_i\twoheadrightarrow\lim YH*F. $$%
	Composing this with the isomorphism $(\lim YH)*F\cong \lim FH$ given by flatness of $F$, we obtain that $F$ is finitely fc-continuous.
\end{proof}

\subsection{Multi-finite limits and lextensive categories}\label{lextesive-sect}$ $

Let $\Phi_{\tx{lext}}$ be the classes of lex weights for lextensive categories (see \cite[5.3]{GL12:articolo}). In this setting $\Phi_{\tx{lext}}$-lex colimits are just finite coproducts; thus a functor is $\Phi_{\tx{lext}}$-exact if and only if it is lex and preserves finite coproducts. Given lextensive categories $\E$ and $\F$, denote by $\tx{Lext}(\E,\F)$ the full subcategory of $[\E,\F]$ spanned by the $\Phi_{\tx{lext}}$-exact functors.

First let us introduce the finite-indexed version of multilimits:

\begin{Def}
	We say that a diagram $H\colon\D\to \C$ has a {\em multi-finite limit} in $\C$ if there exists a finite family of objects $(C_i)_{i\leq n}$ in $\C$ together with cones $\delta_i\colon\Delta C_i\to H$ which form the multilimit of $H$ in $\C$. We say that $\C$ has {\em finite multi-finite limits} if it has all multi-finite limits indexed by finite diagrams.
\end{Def}

Since multi-finite limits are in particular multilimits, a finite family $(C_i)_{i\leq n}$ is the multi-finite limit of $H$ if and only if there exists an isomorphism $$\sum_{i\leq n}\C(-,C_i) \cong\lim YH$$ in $[\C\op,\bo{Set}]$.

Cauchy completeness below needs to be assumed for the same reasons explained above Proposition~\ref{ilext-prop}.

\begin{prop}\label{free-lext}
	The following are equivalent for a small Cauchy complete category $\C$:\begin{enumerate}\setlength\itemsep{0.25em}
		\item $\C$ has a free lextensive completion;
		\item $\tx{Flat}(\C,\bo{Set})$ is closed under connected limits and ultraproducts in $[\C,\bo{Set}]$;
		\item $\C$ has finite multi-finite limits.
	\end{enumerate}
\end{prop}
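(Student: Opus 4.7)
The plan is to mimic the proofs of Propositions~\ref{ilext-prop} and~\ref{free-pretopos}, treating finite multi-finite limits as the simultaneous refinement of finite multilimits (covered by the ``connected limits'' clause) and finite fc-limits (covered by the ``ultraproducts'' clause). I would organize the argument as the cycle $(1)\Rightarrow(2)\Rightarrow(3)\Rightarrow(1)$, applying Theorem~\ref{main} to close the loop.

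For $(1)\Rightarrow(2)$, if the free lextensive completion exists then the universal property gives $\tx{Flat}(\C,\bo{Set})\simeq\tx{Lext}(\Phi_{\tx{lext}}(\C),\bo{Set})$. In $\bo{Set}$, connected limits commute with arbitrary coproducts, while ultraproducts (being filtered colimits in $\bo{Set}$) commute with finite limits and also with finite coproducts (by the ultrafilter dichotomy applied componentwise). Hence $\tx{Lext}(\Phi_{\tx{lext}}(\C),\bo{Set})$ is closed in $[\Phi_{\tx{lext}}(\C),\bo{Set}]$ under connected limits and ultraproducts. Since restriction along the dense inclusion $\C\hookrightarrow\Phi_{\tx{lext}}(\C)$ preserves both these types of (co)limits pointwise, the conclusion transfers to $\tx{Flat}(\C,\bo{Set})$ inside $[\C,\bo{Set}]$.

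For $(2)\Rightarrow(3)$, I would invoke the duality theorem for accessible categories with limits, specifically the version appearing as \cite[Theorem~3.8]{Ten22:duality}, together with Cauchy completeness of $\C$. The hypothesis says that $\tx{Flat}(\C,\bo{Set})$, which is finitely accessible, carries connected limits and ultraproducts and these are computed pointwise. Under Cauchy completeness, $\C\op$ is the category of finitely presentable objects of $\tx{Flat}(\C,\bo{Set})$; the duality then translates the existence of connected limits plus ultraproducts of finitely presentable objects into the existence of finite multi-finite limits in $\C$ (a finite family $(C_i)_{i\leq n}$ is extracted precisely because ultraproducts force the indexing set of the multilimit to be finite, while connected limits give the unique factorization through one component). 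This step is where I expect the main subtlety: one must make sure that the duality is invoked in the form that produces finite-indexed multilimits, rather than arbitrary multilimits as in Proposition~\ref{ilext-prop}.

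For $(3)\Rightarrow(1)$, I would let $\tx{Fam}_f(\C)$ be the closure of the representables in $[\C\op,\bo{Set}]$ under finite coproducts. Since finite coproducts exist in every lextensive category and are preserved by every lextensive functor (by the very definition of $\Phi_{\tx{lext}}$-exactness), for any $M=\sum_{i\leq n}YC_i$ and any flat $F\colon\C\to\E$ into a lextensive $\E$ we have $M*F\cong \sum_{i\leq n}FC_i$, which exists and is preserved by every lextensive functor. Hence $\tx{Fam}_f(\C)\subseteq\Phi^\diamond_{\tx{lext}}[\C]$. By hypothesis, every finite limit of representables $\lim YH$ is isomorphic to some $\sum_{i\leq n}\C(-,C_i)\in\tx{Fam}_f(\C)$; therefore $\Phi^\diamond_{\tx{lext}}[\C]$ has finite limits of diagrams landing in $\C$, and Theorem~\ref{main} produces the free lextensive completion. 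As a byproduct, $\tx{Fam}_f(\C)=\Phi_{\tx{lext}}(\C)$. The main obstacle throughout is step $(2)\Rightarrow(3)$, since the other directions are largely formal manipulations already present in the preceding examples; everything else reduces to checking that finite coproducts of representables interact well with flat precomposition.
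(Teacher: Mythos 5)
Your steps $(1)\Rightarrow(2)$ and $(3)\Rightarrow(1)$ are correct and essentially match the paper: the paper reaches $(2)$ by noting that the existence of the free lextensive completion implies (via \cite[Theorem~7.7]{GL12:articolo}) the existence of both the free infinitary lextensive and the free pretopos completions, and then quoting Propositions~\ref{ilext-prop} and~\ref{free-pretopos}, whereas you argue the commutation of connected limits and ultraproducts with finite coproducts directly in $[\Phi_{\tx{lext}}(\C),\bo{Set}]$; both routes work and yours is marginally more self-contained. Your $(3)\Rightarrow(1)$ via $\tx{Fam}_f(\C)\subseteq\Phi^\diamond_{\tx{lext}}[\C]$ and Theorem~\ref{main} is exactly the paper's argument.

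The one genuine problem is in $(2)\Rightarrow(3)$. You propose to extract finite multi-finite limits from a single application of \cite[Theorem~3.8]{Ten22:duality}, but that duality concerns accessible categories equipped with a \emph{class of limits}, and ultraproducts are not limits (they are filtered colimits of products, and $\tx{Flat}(\C,\bo{Set})$ need not have products here). So the finiteness of the indexing family cannot be read off from that theorem; your parenthetical claim that ``ultraproducts force the indexing set of the multilimit to be finite'' is the right intuition but is not delivered by the cited result. The paper instead splits the step: closure under connected limits gives finite multilimits by Proposition~\ref{ilext-prop} (this is where Diers duality and Cauchy completeness enter), closure under ultraproducts gives finite fc-limits by Proposition~\ref{free-pretopos} (which rests on \cite{beke2005flatness}, not on the limit duality), and then one observes that a finite diagram admitting both a multilimit $(C_i)_{i\in I}$ and an fc-limit $(D_j)_{j\leq n}$ has $I$ finite: each cone $\delta_i$ factors through some $D_j$, each $D_j$ factors through a unique $C_{i(j)}$, and uniqueness of factorizations through the multilimit forces every $i\in I$ to occur among the finitely many $i(j)$. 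Replacing your single appeal to the duality theorem with this two-pronged reduction and short combination argument closes the gap.
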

\begin{proof}
	$(1)\Rightarrow (2)$. If the free lextensive completion of $\C$ exists then also the free infinitary lextensive completion exists (by \cite[Theorem~7.7]{GL12:articolo}); thus $\tx{Flat}(\C,\bo{Set})$ has connected limits by Proposition~\ref{ilext-prop}. Now, since also the free pretopos completion exists in this case (by the same theorem of \cite{GL12:articolo}), the category $\tx{Flat}(\C,\bo{Set})$ is also closed under ultraproducts in $[\C,\bo{Set}]$ by Proposition~\ref{free-pretopos}, showing $(2)$.
	
	$(2)\Rightarrow (3)$. Again by Propositions~\ref{ilext-prop} and~\ref{free-pretopos}, $\C$ has both finite multilimits and finite fc-limits. The existence of these implies that the finite multilimits of $\C$ are actually multi-finite.

	$(3)\Rightarrow (1)$. Let $\tx{Fam}_f(\C)$ be the closure of the representables in $[\C\op,\bo{Set}]$ under finite coproducts (that is, the finite coproduct completion of $\C$); then it is easy to see that $\tx{Fam}_f(\C)\subseteq \Phi_{\tx{lext}}^\diamond[\C]$ for any $\C$. Thus, if $\C$ has finite multi-finite limits then $\Phi_{\tx{lext}}^\diamond[\C]$ has finite limits of representables and the free lextensive completion exists by Theorem~\ref{main}.
\end{proof}

Note that then $\tx{Fam}_f(\C)$ is the free lextensive completion of $\C$. In fact, it is easy to see that if $\C$ has finite multi-finite limits then $\tx{Fam}_f(\C)$ is lex, and hence a lextensive category.

The following definition is a variation of the notion of finitely multicontinuous functor:

\begin{Def}
	Let $F\colon\C\to\E$ be a functor from a category $\C$ with finite multi-finite limits into a lex category $\E$ with finite coproducts. We say that $F$ {\em merges finite multi-finite limits} if for any finite diagram $H\colon \D\to\C$ with multi-finite limit $(C_i)_{i\leq n}$ there is an isomorphism
	$$ \sum_{i\leq n}FC_i\cong \lim FH $$%
	induced by the image of the limiting cones.
\end{Def}

\begin{prop}
	Let $\C$ be small and have finite multi-finite limits. A functor $F\colon\C\to\E$ into a lextensive category $\E$ is flat if and only if it merges finite multi-finite limits.
\end{prop}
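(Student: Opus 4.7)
The plan is to argue exactly as in Proposition~\ref{flat-multi}, substituting finite multi-finite limits for finite multilimits and using that a lextensive category has finite coproducts (in place of arbitrary ones). The characterization in Proposition~\ref{flat-prop} tells us that $F\colon\C\to\E$ is flat if and only if for every finite diagram $H\colon\D\to\C$ the weighted colimit $(\lim YH)*F$ exists in $\E$ and the comparison map gives an isomorphism $(\lim YH)*F\cong \lim FH$. So my aim is to rewrite both sides in terms that match the definition of merging finite multi-finite limits.

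Since $\C$ has finite multi-finite limits, for any such $H$ the limit of representables admits the presentation
$$ \lim YH\cong \sum_{i\leq n}\C(-,C_i) $$
in $[\C\op,\bo{Set}]$, where $(C_i)_{i\leq n}$ is the multi-finite limit of $H$. Applying the functor $-*F\colon[\C\op,\bo{Set}]\to\E$ (where it is defined), and using that $\C(-,C_i)*F\cong FC_i$ together with the fact that $-*F$ preserves any colimit that exists in $\E$, the object $(\lim YH)*F$ exists in $\E$ precisely because the finite coproduct $\sum_{i\leq n}FC_i$ does (by lextensivity of $\E$), and it is canonically isomorphic to this finite coproduct.

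Putting these two identifications together, the flatness condition $(\lim YH)*F\cong \lim FH$ becomes exactly the isomorphism
$$ \sum_{i\leq n}FC_i\cong \lim FH $$
induced by the limiting cones, which is precisely the condition that $F$ merges finite multi-finite limits. Hence $F$ is flat if and only if $F$ merges finite multi-finite limits.

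There is no substantial obstacle here: the argument is essentially a transcription of the proof of Proposition~\ref{flat-multi}, the only point to verify being that the coproducts involved now actually exist in $\E$, which is guaranteed because $\E$ is lextensive and the multi-finite limits of $\C$ are indexed by finite families.
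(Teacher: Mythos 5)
Your proof is correct and follows exactly the route the paper intends: the paper's own proof of this proposition is literally the instruction to ``argue exactly as in the proof of Proposition~\ref{flat-multi}'', and your write-up is a faithful transcription of that argument with the (correct) observation that the coproducts involved are now finite and hence exist by lextensivity of $\E$.
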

\begin{proof}
	Argue exactly as in the proof of Proposition~\ref{flat-multi}.
\end{proof}

\subsection{Polylimits}\label{poly}$ $

Let us now define a class $\Phi$ of lex colimits that wasn't considered in \cite{GL12:articolo}; this will describe the {\em quasi-based} lex categories of \cite{HT96qc:articolo} as free $\Phi$-exact completions. 

Let $\C$ be a category with an initial object. Following \cite[Definition~4.10]{Ten22:duality}, we say that a diagram $H\colon \G \to\C$, indexed on a small groupoid $\G$, is a free action if for each $g\neq h\colon x\to y$ in $\G$ the equalizer of $(Hg,Hh)$ is the initial object of $\C$. We say that $\C$ has colimits of free actions if it has colimits of diagrams taken along a free action. Note that every diagram from a discrete groupoid $\G$ is free, hence coproducts are colimits of free actions.

Let $\Phi$ be the class of lex weights $M\colon C\op\to\bo{Set}$, where $\C$ is lex and $M\cong\colim (YH)$ with $H\colon\G\to\C$ a groupoid-indexed diagram for which $YH$ is a free action in $[\C\op,\bo{Set}]$. In particular, by the arguments above, $\Phi$ contains the class $\Phi_{\tx{ilext}}$ of Section~\ref{ilext}.

\begin{obs}\label{freecompanion}
	It follows that the full subcategory $\Phi[\C]$ of $[\C\op,\bo{Set}]$ spanned by the weights in $\Phi$ with domain $\C\op$, coincides with the category $\mt F_1\C$ considered in \cite{LT22:limits,Ten22:duality}.
\end{obs}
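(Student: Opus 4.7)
The plan is a direct comparison of definitions: the statement asserts equality of $\Phi[\C]$ with a category $\mt F_1\C$ already constructed in \cite{LT22:limits,Ten22:duality}, and since the construction of $\Phi[\C]$ given in this section mirrors verbatim the construction of $\mt F_1\C$ from those references, the proof amounts to matching up the two definitions.

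First I would make explicit the description of $\Phi[\C]$: by unpacking the definition of $\Phi$ given just above, its objects are precisely the presheaves $M\colon\C\op\to\bo{Set}$ for which there exist a small groupoid $\G$ and a diagram $H\colon\G\to\C$ such that the post-composite $YH\colon\G\to[\C\op,\bo{Set}]$ with the Yoneda embedding is a free action (in the sense of \cite[Definition~4.10]{Ten22:duality}, that is, for distinct parallel arrows $g\neq h\colon x\to y$ in $\G$ the equalizer of $(YHg,YHh)$ computed in $[\C\op,\bo{Set}]$ is the initial presheaf), and $M\cong \colim(YH)$. In other words, $\Phi[\C]$ is the full subcategory of $[\C\op,\bo{Set}]$ consisting of all colimits of free groupoid actions whose vertices are representable.

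Next I would recall that $\mt F_1\C$ from \cite{LT22:limits,Ten22:duality} is, by construction, defined in exactly the same way: it is the full subcategory of $[\C\op,\bo{Set}]$ obtained by closing the representables under colimits of free actions. Comparing the two prescriptions object-by-object gives $\Phi[\C]=\mt F_1\C$. The only point that truly needs checking --- and which I would flag as the main (minor) subtlety --- is that the two notions of free action agree. This is automatic: both sources formulate the free-action condition inside a presheaf category via initiality of equalizers of distinct parallel morphisms, and in our case the action $YH$ lives in $[\C\op,\bo{Set}]$ (not in $\C$ itself), which is exactly the ambient category used in the reference. Hence no transfer along $Y$ is needed --- in particular, one does not need $Y$ to preserve the initial object --- and the identification $\Phi[\C]=\mt F_1\C$ follows directly.
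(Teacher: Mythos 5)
Your proposal is correct and matches the paper's treatment: the remark is stated without proof precisely because, as you observe, both $\Phi[\C]$ and $\mt F_1\C$ are by definition the full subcategory of $[\C\op,\bo{Set}]$ spanned by colimits of free groupoid actions with representable vertices, with the free-action condition checked in $[\C\op,\bo{Set}]$ in both cases, so the identification is a direct matching of definitions. Your flagged subtlety (that no transfer of the free-action condition along $Y$ is required) is the right point to isolate, and your implicit reading of ``weights in $\Phi$ with domain $\C\op$'' as dropping the lexness requirement on the domain is the intended one.
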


\begin{lema}\label{freeact}
	If a lex category $\E$ has $\Phi$-lex colimits then it has colimits of free actions. If $\E$ is $\Phi$-exact, $M\colon\C\op\to\bo{Set}$ is in $\Phi$, and $K\colon\C\to\E$ is lex, then
	$$M*K\cong \colim KH$$
	for some $H\colon\G\to\C$ for which $KH$ is a free action in $\E$.
	In particular a lex functor $F\colon\E\to\F$, out of a $\Phi$-exact $\E$, preserves $\Phi$-lex colimits if and only if it preserves colimits of free actions.
\end{lema}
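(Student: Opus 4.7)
The three parts rest on a common Fubini identity for weighted colimits: for any presentation $M\cong \colim_g YH(g)$ with $H\colon\G\to\C$ and any $K\colon\C\to\E$ for which the relevant colimits exist,
$$
M*K \;\cong\; (\colim_g YH(g))*K \;\cong\; \colim_g (YH(g)*K) \;\cong\; \colim_g KH(g) \;\cong\; \colim KH,
$$
using cocontinuity of the weighted-colimit functor $-*K$ together with the identity $YC*K\cong KC$ for representables.

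For the first claim, given a free action $H\colon\G\to\E$ in $\E$, I would embed it into a small lex setting. Take $\C$ to be the smallest full subcategory of $\E$ closed under the finite limits of $\E$ and containing the image of $H$; this is small as $\G$ is, and the equalizers witnessing the free action condition force $0_\E\in\C$. The inclusion $K\colon\C\hookrightarrow\E$ is lex since it both preserves and reflects finite limits, and factoring $H=KH'$ yields a free action $H'\colon\G\to\C$ in $\C$. Consequently $YH'$ is a free action in $[\C\op,\bo{Set}]$, so $M:=\colim YH'$ lies in $\Phi$. The hypothesis that $\E$ has $\Phi$-lex colimits then guarantees $M*K$ exists in $\E$, and the Fubini identity above gives $M*K\cong \colim KH'=\colim H$.

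For the second claim, fix a presentation $M\cong \colim YH$ with $H\colon\G\to\C$ and $YH$ a free action. Since $\E$ is $\Phi$-exact and $K$ is lex, $M*K$ exists and the Fubini identity immediately yields $M*K\cong \colim KH$. To verify that $KH$ is a free action in $\E$, note that $\C$ lex and $K$ lex imply $K$ is flat by Proposition~\ref{flatchar}. Embedding $\E\hookrightarrow\T$ as a full subcategory of a small-exact $\T$ closed under finite limits and $\Phi$-lex colimits (Definition~\ref{exact}), the composite $\C\to\T$ remains flat, and Proposition~\ref{V-topos} makes $-*K\colon[\C\op,\bo{Set}]\to\T$ a lex $\V$-functor. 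Applying this lex functor to the identity $\text{eq}(YHg,YHh)\cong 0_{[\C\op,\bo{Set}]}$ coming from the free-action assumption on $YH$, and using that $-*K$ sends the initial presheaf to $0_\T$ and that the embedding of $\E$ reflects the initial object, yields $\text{eq}(KHg,KHh)\cong 0_\E$.

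For the final equivalence, the second claim shows that every $\Phi$-lex colimit $M*K$ out of a $\Phi$-exact $\E$ can be rewritten as $\colim KH$ for a free action $KH\colon\G\to\E$; thus preservation of $\Phi$-lex colimits by a lex $F\colon\E\to\F$ reduces, via Fubini applied to the lex composite $FK$, to preservation of colimits of such free actions. Conversely, the first claim realizes any free action in $\E$ as a $\Phi$-lex colimit, so preservation of colimits of free actions forces preservation of $\Phi$-lex colimits. The main obstacle is the free-action verification in the second step, where the transfer of the initial-object equalizer condition from $[\C\op,\bo{Set}]$ through $-*K$ to $\E$ is not automatic from lex-ness of $K$ alone, but hinges on the flatness of $K$ granted by Proposition~\ref{flatchar} combined with the small-exact embedding built into our formulation of $\Phi$-exactness.
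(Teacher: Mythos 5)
Your proof is correct and, for the first and third claims, takes essentially the paper's route: realize the colimit of a free action as a $\Phi$-lex colimit and deduce the equivalence of the two preservation properties from the Fubini isomorphism $M*K\cong\colim KH$. (Your first step is in fact slightly more careful than the paper's, which forms $\colim(YH)$ in $[\E\op,\bo{Set}]$ for a possibly large $\E$; cutting down to a small lex full subcategory ensures the weight has small lex domain, as the definition of $\Phi$ requires.) The genuine divergence is in the second claim, where one must show that $KH$ is a free action in $\E$. The paper transports the free action along the lex cocontinuous $\P K\colon\P\C\to\P\E$, restricts to $\Phi_l\E$, and applies the lex reflection $T\colon\Phi_l\E\to\E$ of \cite[Proposition~3.4]{GL12:articolo}, using that $T$ preserves equalizers and the initial object. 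You instead note that $JK$ is lex out of the lex $\C$, hence flat by Proposition~\ref{flatchar}, so that $-*(JK)\colon[\C\op,\bo{Set}]\to\T$ is lex and cocontinuous by Proposition~\ref{V-topos}; applying it to $\mathrm{eq}(YHg,YHh)\cong 0$ and descending along the fully faithful $\Phi$-exact $J\colon\E\hookrightarrow\T$ (which preserves finite limits and the initial object and reflects isomorphisms) gives $\mathrm{eq}(KHg,KHh)\cong 0_\E$. Both mechanisms are sound; yours stays within the flatness machinery of Section~\ref{flatness} and the small-exact embedding built into Definition~\ref{exact}, at the cost of invoking cocompleteness of $\T$, while the paper's avoids flatness but imports the reflection from \cite{GL12:articolo}. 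One shared caveat, not specific to your argument: both you and the paper pass from ``$H$ is a free action in $\C$'' to ``$YH$ is a free action in $[\C\op,\bo{Set}]$'' in the first claim, and since $Y(0_\C)$ is not the initial presheaf this step rests on the precise reading of free actions in \cite{Ten22:duality} rather than on limit-preservation of $Y$ alone.
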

\begin{proof}
	Suppose that $\E$ has $\Phi$-lex colimits; then in particular it has all coproducts and hence an initial object. Now, for any free action $H\colon \G \to\E$, the weight $M:=\colim (YH)$ in $[\E\op,\bo{Set}]$ is by definition in $\Phi$. Thus the $\Phi$-lex colimit $M*1_\E$ exists in $\C$ by hypothesis. But $M*1_\E\cong\colim H$; therefore $\E$ has colimits of free actions.
	
	Assume now that $\E$ is $\Phi$-exact and consider $M\colon\C\op\to\bo{Set}$ in $\Phi$ and a lex $K\colon\C\to\E$. Since by definition $M\cong\colim (Y_{\C}H)$ with $H\colon\G\to\C$ and $Y_{\C}H$ a free action, we have an isomorphism $M*K\cong\colim KH$; thus to conclude it is enough to prove that $KH$ is a free action in $\E$.
	
	Consider the isomorphism $Y_{\E}.K.H\cong \P K.Y_{\C}.H$, where $\P K\colon\P\C\to\P\E$ is the cocontinuous functor induced by taking free cocompletions. Since $\P K$ is lex by \cite[Remark~6.6]{DL07} (because $K$ was), it follows that $Y_{\E}KH$ is still a free action (since $Y_{\C}H$ was). Now, by \cite[Proposition~3.4]{GL12:articolo} the inclusion $J\colon\E\to\Phi_l\E$ has a lex left adjoint $T\colon\Phi_l\E\to\E$. Since $\Phi_l\E$ is closed in $\P\E$ under finite limits and the initial object, the functor $JKH\colon\C\to \Phi_l\E$ (obtained by restricting the codomain of $Y_{\C}KH$) is a free action. Then, since $T$ is lex and preserves (any existing colimit and in particular) the initial object of $\Phi_l\E$, the diagram $KH\cong TJKH$ is also a free action, as desired.
\end{proof}

As a consequence we can characterize $\Phi$-exact categories as follows:

\begin{cor}
	A category $\E$ is $\Phi$-exact if and only if it can be identified with a full subcategory of a small-exact $\T$ closed under finite limits and colimits of free actions.
\end{cor}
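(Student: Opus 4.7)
The plan is to deduce this directly from Definition~\ref{exact} together with Lemma~\ref{freeact}, which already packages the comparison between $\Phi$-lex colimits and colimits of free actions in both the existence and preservation form.

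For the forward direction, suppose $\E$ is $\Phi$-exact and let $J\colon\E\hookrightarrow\T$ be a full inclusion into a small-exact $\T$ preserving finite limits and $\Phi$-lex colimits. By Lemma~\ref{freeact}, $\E$ has colimits of free actions; moreover, since $J$ is lex and preserves $\Phi$-lex colimits, the same lemma gives that $J$ preserves colimits of free actions. Thus $\E$ sits in $\T$ as a full subcategory closed under finite limits and colimits of free actions, as required.

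For the converse, let $\E$ be a full subcategory of a small-exact $\T$ closed under finite limits and colimits of free actions; I need to verify closure under $\Phi$-lex colimits. Fix $M\colon\C\op\to\bo{Set}$ in $\Phi$ and a lex $K\colon\C\to\E$, so that $JK\colon\C\to\T$ is also lex. Since $\T$ is small-exact it has all colimits and is in particular $\Phi$-exact, so Lemma~\ref{freeact} produces a groupoid-indexed diagram $H\colon\G\to\C$ for which $JKH$ is a free action in $\T$ and $M*JK\cong\colim JKH$. The crucial step is to show that $KH$ is already a free action in $\E$: for each pair $g\neq h$ in $\G$, the equalizer in $\E$ of $(KHg,KHh)$ is preserved by the lex $J$ to the equalizer in $\T$ of $(JKHg,JKHh)$, which is the initial object of $\T$; since the initial of $\T$ belongs to $\E$ (as the colimit of the empty free action, which falls under the closure hypothesis) and is therefore also the initial of $\E$, fully-faithfulness of $J$ forces the equalizer in $\E$ to be initial. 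Thus $KH$ is a free action in $\E$, so by closure $\colim KH$ exists in $\E$ and $J$ preserves it, giving $M*JK\cong J(\colim KH)$. This shows $M*K$ exists in $\E$ and coincides with $\colim KH$, so $\E$ is closed under $\Phi$-lex colimits.

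The main obstacle is the clean transfer of the \emph{free action} property between $\E$ and $\T$: one needs that the initial object of $\T$ belongs to $\E$ (and equals the initial of $\E$), which is exactly ensured by applying the closure-under-free-actions hypothesis to the empty groupoid, and one needs fully-faithfulness of $J$ to conclude that the equalizers computed in $\E$ are themselves initial, not merely mapped into something initial.
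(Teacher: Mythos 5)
Your proof is correct and follows essentially the same route as the paper's: the forward direction is read off from Definition~\ref{exact} and Lemma~\ref{freeact}, and the converse applies Lemma~\ref{freeact} in the small-exact $\T$ and then transfers the free-action property of $JKH$ back to $KH$ in $\E$ using that $J$ is fully faithful, lex, and preserves the initial object. The only difference is that you spell out why the initial object of $\T$ lies in $\E$ (as the colimit of the empty free action) and why fully faithfulness forces the equalizers computed in $\E$ to be initial, details the paper leaves implicit.
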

\begin{proof}
	If $\E$ is $\Phi$-exact, this follows by Definition~\ref{exact} and by the lemma above. Conversely, assume that $J\colon\E\to\T$ is a fully faithful embedding of $\E$ in a small-exact $\T$, that $\E$ has finite limits and colimits of free actions, and that $J$ preserves them. It is enough to prove that $\E$ is also closed under $\Phi$-lex colimits. 
	
	Given $M\colon\C\op\to\bo{Set}$ in $\Phi$ and a lex $K\colon\C\to\E$, since every small-exact is $\Phi$-exact, the lemma above implies that $M*JK\cong \colim JKH$ for some $H\colon\G\to\C$ for which $JKH$ is a free action. But $J$ is fully faithful, lex, and preserves the initial object; thus $KH$ is also a free action in $\E$. It follows that $\colim KH$ exists in $\E$ and is preserved by $J$. Again by fully faithfulness, this implies that $M*K\cong \colim KH $ exists in $\E$ and is preserved by $J$.
\end{proof}

The virtual limits involved in this kind of free exact completions are polylimits:

\begin{Def}{\cite[Definition~0.12]{Lam89:PhD}}
	Let $H\colon\C\to\A$ be a diagram in a category $\A$. A {\em polylimit} of $H$ is given by a family of cones $(c_i\colon\Delta A_i\to H)_{i\in I}$, with $A_i\in\A$, with the following property: for any cone $c\colon\Delta C\to H$ there exists a unique $i\in I$ and a map $f\colon C\to A_i$ in $\A$ such that $c=c_i\circ \Delta f$, moreover $f$ is unique up to unique automorphism of $A_i$.
\end{Def}

We are now ready to prove the following. Once again Cauchy completeness needs to be assumed for the same reasons explained above Proposition~\ref{ilext-prop}. By {\em wide pullbacks} we mean limits of diagrams of the form $(h_i\colon A_i\to B)_{i\in I}$ for a possibly infinite set $I$.

\begin{prop}
	The following are equivalent for a small Cauchy complete category $\C$: \begin{enumerate}\setlength\itemsep{0.25em}
		\item $\C$ has a free $\Phi$-exact completion;
		\item $\tx{Flat}(\C,\bo{Set})$ has wide pullbacks;
		\item $\C$ has finite polylimits.
	\end{enumerate}
\end{prop}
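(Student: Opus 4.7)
The plan is to follow the pattern established in Propositions~\ref{ilext-prop}, \ref{free-pretopos}, and~\ref{free-lext}: derive (1)$\Rightarrow$(2) by translating the universal property of the free completion into a closure property of flat functors in $[\C,\bo{Set}]$; deduce (2)$\Rightarrow$(3) from the duality theory for small Cauchy complete categories and accessible categories with limits of \cite{Ten22:duality}; and obtain (3)$\Rightarrow$(1) by exhibiting an explicit candidate for the free $\Phi$-exact completion and applying Theorem~\ref{main}.

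For (1)$\Rightarrow$(2), the universal property gives $\tx{Flat}(\C,\bo{Set})\simeq\Phi\tx{-Ex}(\Phi_l\C,\bo{Set})$, so it suffices to show that the right-hand side is closed under wide pullbacks in $[\Phi_l\C,\bo{Set}]$. By Lemma~\ref{freeact}, a functor into $\bo{Set}$ is $\Phi$-exact if and only if it is lex and preserves colimits of free actions. Since pointwise wide pullbacks plainly preserve finite limits, I reduce to the claim that wide pullbacks commute in $\bo{Set}$ with colimits of free groupoid actions. This is the exactness property underlying the notion of quasi-based lex category in \cite{HT96qc:articolo}; the verification uses that a free groupoid action on sets decomposes up to skeleton as a coproduct of free group actions, together with the fact that wide pullbacks are connected limits and therefore commute with coproducts in $\bo{Set}$.

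For (2)$\Rightarrow$(3), since $\C$ is small, $\tx{Flat}(\C,\bo{Set})$ is finitely accessible, and the existence of wide pullbacks brings it within the scope of the duality theory of \cite{Ten22:duality}, paralleling the role played by connected limits in the multilimit case of Proposition~\ref{ilext-prop}. Cauchy completeness of $\C$ identifies $\C\op$ with the category of finitely presentable objects of $\tx{Flat}(\C,\bo{Set})$, and the duality then reads off finite polylimits in $\C$ from wide pullbacks in $\tx{Flat}(\C,\bo{Set})$.

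For (3)$\Rightarrow$(1), I would take the category $\mt F_1\C=\Phi[\C]$ of Remark~\ref{freecompanion}, that is, the closure of the representables in $[\C\op,\bo{Set}]$ under colimits of free actions. By Lemma~\ref{freeact}, such colimits exist and are preserved in any $\Phi$-exact category, so $\mt F_1\C\subseteq\Phi^\diamond[\C]$. When $\C$ has finite polylimits, any finite limit of representables $\lim YH$ is isomorphic to the colimit of the free groupoid action on the family of representables $\bigl(\C(-,A_i)\bigr)_i$ induced by the polylimit cones $(c_i\colon\Delta A_i\to H)_i$; hence $\lim YH$ lies in $\mt F_1\C\subseteq\Phi^\diamond[\C]$. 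Thus $\Phi^\diamond[\C]$ has all finite limits of diagrams landing in $\C$, and Theorem~\ref{main} delivers the free $\Phi$-exact completion, which moreover coincides with $\mt F_1\C$.

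The main obstacle is (1)$\Rightarrow$(2): the commutation of wide pullbacks with colimits of free actions in $\bo{Set}$ is subtle, since the corresponding statement for arbitrary (non-free) group actions fails --- the quotient $X/G$ of a non-trivial $G$-action need not commute with binary products. The freeness hypothesis must be used in an essential way, via the skeletal reduction to free group actions and the connectedness of the indexing category of the wide pullback.
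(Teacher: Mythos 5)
Your proof is correct and follows essentially the same route as the paper's, so only two small points of comparison are worth recording. For $(1)\Rightarrow(2)$ the paper does not verify that wide pullbacks commute in $\bo{Set}$ with colimits of free actions; it simply cites \cite[Proposition~1.4]{HT96qc:articolo}, where these colimits are called quasi-coproducts. Your reduction via connectedness of wide pullbacks to free group actions is the right idea, and you correctly identify the commutation with quotients by free group actions as the genuine content --- that is exactly what the citation supplies, so you need not reprove it. For $(3)\Rightarrow(1)$ the paper proceeds slightly differently: it first shows that $\D:=\mt F_1\C$ is closed in $[\C\op,\bo{Set}]$ under finite limits and colimits of free actions (by \cite[4.28]{LT22:limits} and \cite[3.9]{Ten22:duality}), hence is $\Phi$-exact, and then checks the universal property of the inclusion $\C\hookrightarrow\D$ directly; you instead verify $\mt F_1\C\subseteq\Phi^\diamond[\C]$ together with the fact that polylimits place every finite limit of representables inside $\mt F_1\C$, and then invoke condition (4) of Theorem~\ref{main}. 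Your version matches the template of Propositions~\ref{ilext-prop}, \ref{free-pretopos} and~\ref{free-lext} and is, if anything, more uniform. The one detail you should make explicit in that step is that for $M\cong\colim YH$ in $\mt F_1\C$ and $F\colon\C\to\E$ flat, the diagram $FH$ is again a free action in $\E$ (because $F$ sends the relevant equalizers, which are finite virtual limits, to the initial object of $\E$), so that $M*F\cong\colim FH$ is genuinely a colimit of a free action and therefore exists and is preserved by Lemma~\ref{freeact}. Both arguments identify the completion with $\mt F_1\C$.
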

\begin{proof}
	$(1)\Rightarrow(2)$. Let $\Phi_l\C$ be the free $\Phi$-exact completion of $\C$; then  $$\tx{Flat}(\C,\bo{Set})\simeq\Phi\tx{-Ex}(\Phi_l\C,\bo{Set}).$$ 
	By Lemma~\ref{freeact} above, a functor $\Phi_l\C\to\bo{Set}$ is $\Phi$-exact if and only if it preserves finite limits and colimits of free actions. Since these commute in $\bo{Set}$ with wide pullbacks (see \cite[Proposition~1.4]{HT96qc:articolo}, where colimits of free actions are called quasi-coproducts), the category $\Phi\tx{-Ex}(\Phi_l\C,\bo{Set})$ is closed in the presheaf category under wide pullbacks. Thus $(2)$ follows.
	
	$(2)\Rightarrow(3)$. This is a direct consequence of the duality theorem of \cite{HT96qc:articolo} which requires $\C$ to be Cauchy complete --- see also \cite[Proposition~4.28]{LT22:limits} and \cite[Section~4.2]{Ten22:duality}.
	
	$(3)\Rightarrow(1)$. If $\C$ has finite polylimits the full subcategory $\D:=\mt F_1\C$ of $[\C\op,\bo{Set}]$, spanned by colimits of free actions of representables in $[\C\op,\bo{Set}]$ is closed under finite limits and colimits of free actions (by \cite[4.28]{LT22:limits} and \cite[3.9]{Ten22:duality}) making it $\Phi$-exact. Let $K\colon\C\hookrightarrow\D$ be the inclusion; it is now standard to see that a functor $F\colon\C\to\E$, into a $\Phi$-exact $\E$, is flat if and only if $\tx{Lan}_KF$ is lex and preserves colimits of free actions. Therefore $\D$ is the free $\Phi$-exact completions of $\C$ (and coincides with $\Phi_l\C$). 
\end{proof}

It follows from the proof, that $\E$ is the free $\Phi$-exact completion of a small category $\C$ with finite polylimits, if and only if $\E\simeq \mt F_1\C$ and is lex, if and only if it is quasi-based in the sense of \cite[Section~3]{HT96qc:articolo} (thanks to \cite[4.2.1]{LT22:limits}).

As with the other examples, we can characterize flat functors out of a category $\C$ with finite polylimits. In fact, an easy computation shows that $F\colon\C\to\E$, into a $\Phi$-exact $\E$, is flat if and only if it is finitely polycontinuous in the sense of \cite[2.3]{HT96qc:articolo}.

\subsection{$\Psi^+$-virtual limits}\label{Psi-virtual}$ $

For this section we fix a {\em weakly sound} class of weights $\Psi$ as defined in \cite[4.4]{LT22:limits}, which generalizes the original concept of \cite{ABLR02:articolo}. In short, $\Psi$ is weakly sound if any small and $\Psi$-continuous $\V$-functor $M\colon\A\to\V$ (from a $\Psi$-complete $\A$ for which the free limit completion $\P^\dagger\A$ is complete) is {\em $\Psi$-flat}, meaning that $M$-weighted colimits commute in $\V$ with $\Psi$-limits.

For a list of examples see \cite[4.8]{LT22:limits}. Denote by $\Psi^+$ be the corresponding class of weights, with small domain, whose colimits commute in $\V$ with $\Psi$-limits. 

We consider the class of lex weights $\Phi$ induced by $\Psi$ as follows: for any weight $M\colon\C\op\to\V$ with small domain, consider the inclusion $J\colon\C\to\tx{Fin}(\C)$ of $\C$ into its free completion under finite colimits, then we let $\tx{Lan}_{J\op}M\in\Phi$ if and only if $M\in\Psi^+$.

In this setting $\Phi$-lex colimits in a lex $\V$-category are just $\Psi^+$-colimits, and a $\Phi$-exact $\V$-functor is one that preserves finite limits and $\Psi^+$-colimits. This is because, given $M\in\Psi^+$ as above and $H\colon\C\to\E$ into a lex $\V$-category $\E$, the $\Psi^+$-colimit $M*H$ exists if and only if the $\Phi$-lex colimit $$\tx{Lan}_{J\op}M*\tx{Ran}_JH$$ exists, and in that case they coincide (here $\tx{Ran}_JH$ exists and is lex by definition of $J$).

The notion of limit below was first considered in \cite[Definition~4.3]{karazeris2009representability} and studied more recently in \cite{LT22:limits,Ten22:duality}.

\begin{Def}
	Given a $\V$-category $\C$, a weight $M\colon\D\to\V$, and $H\colon\D\to\C$; we say that the {\em $\Psi^+$-virtual limit} of $H$ weighted by $M$ exists in $\C$ if $$[\D,\V](M,\C(-,H))\colon\C\op\to\V$$ lies in $\Psi^+(\B)$. 
	We say that $\B$ is {\em finitely $\Psi^+$-virtually complete} if it has all $\Psi^+$-virtual limits weighted by a finite weight $M$.
\end{Def}

Since $[\C,\V](M,\B(-,H))\cong \{M,YH\}$, the $\Psi^+$-virtual limit in $\C$ exists if and only if the $\V$-functor $\{M,YH\}$ in $[\C\op,\V]$ is a $\Psi^+$-colimit of representables. Then our characterization Theorem~\ref{main} implies:

\begin{prop}
	The following are equivalent for a small and Cauchy complete $\V$-category $\C$:\begin{enumerate}\setlength\itemsep{0.25em}
		\item $\C$ has a free $\Phi$-exact completion;
		\item $\tx{Flat}(\C,\V)$ has $\Psi$-limits;
		\item $\C$ is finitely $\Psi^+$-virtually complete.
	\end{enumerate}
	In that case then $\Phi_l(\C)=\Psi^+(\C)$.
\end{prop}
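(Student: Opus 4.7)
The plan is to apply Theorem~\ref{main} for the equivalence $(1)\Leftrightarrow (3)$, exploit the universal property for $(1)\Rightarrow (2)$, and invoke the duality for accessible $\V$-categories with limits from~\cite{Ten22:duality,LT22:limits} for $(2)\Rightarrow (3)$.

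The crucial step is to identify $\Phi^\diamond[\C]$ with $\Psi^+(\C)$, the closure of representables in $[\C\op,\V]$ under $\Psi^+$-colimits. Since $\Phi$-lex colimits in any $\Phi$-exact $\V$-category coincide with $\Psi^+$-colimits (by construction of $\Phi$ from $\Psi$), the inclusion $\Psi^+(\C)\subseteq \Phi^\diamond[\C]$ can be proved along the lines of Lemma~\ref{freeact}: given $M\in\Psi^+(\C)$ written as a $\Psi^+$-colimit $M\cong N*(YK)$ of representables with $K\colon\D\to\C$, and a flat $H\colon\C\to\E$ into a $\Phi$-exact $\E$, one reduces $M*H$ to the $\Psi^+$-colimit $N*(HK)$, which exists and is preserved by $\Phi$-exact functors after extending $H$ through $\tx{Lan}_Z H$ as in Lemma~\ref{flat-pres}. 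For the reverse inclusion, apply the definition of $\Phi^\diamond$ to the flat Yoneda embedding $Y\colon\C\to[\C\op,\V]$ into the small-exact category $[\C\op,\V]$: the isomorphism $M*Y\cong M$ and preservation under the localization to $\Psi^+(\C)$ place $M$ inside $\Psi^+(\C)$. Under the identification $\Phi^\diamond[\C]=\Psi^+(\C)$, condition $(4)$ of Theorem~\ref{main} translates literally into condition $(3)$ here, since $\{M,YH\}\in \Psi^+(\C)$ for finite $M$ is exactly the definition of finite $\Psi^+$-virtual completeness of $\C$. This also proves the final equality, because Lemma~\ref{diamond-lex} then gives $\Phi_l\C=\Phi^\diamond[\C]=\Psi^+(\C)$.

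For $(1)\Rightarrow(2)$: assuming the free $\Phi$-exact completion exists, the universal property yields $\tx{Flat}(\C,\V)\simeq \Phi\tx{-Ex}(\Phi_l\C,\V)$. By the defining property of $\Psi^+$, $\Psi^+$-colimits commute in $\V$ with $\Psi$-limits; combined with the fact that $\Psi$-limits commute with finite limits in $\V$, this shows that the full subcategory of $[\Phi_l\C,\V]$ spanned by the $\Phi$-exact $\V$-functors is closed under pointwise $\Psi$-limits. Hence $\tx{Flat}(\C,\V)$ inherits $\Psi$-limits, computed pointwise. For $(2)\Rightarrow(3)$: since $\C$ is small and Cauchy complete, $\C\op$ is equivalent to the full subcategory of finitely presentable objects of the finitely accessible $\V$-category $\tx{Flat}(\C,\V)$; if this category further has $\Psi$-limits, then the duality between finitely accessible $\V$-categories with $\Psi$-limits and small Cauchy complete finitely $\Psi^+$-virtually complete $\V$-categories (see \cite[Theorem~3.8]{Ten22:duality} or \cite[Proposition~4.28]{LT22:limits}) gives condition $(3)$ immediately.

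The main technical obstacle is the identification $\Phi^\diamond[\C]=\Psi^+(\C)$, and specifically showing that the existence of $M$-wlex colimits of flat diagrams in a $\Phi$-exact $\E$ can be reduced to the existence of $\Psi^+$-colimits of (not-necessarily-lex) diagrams; once this is in hand, the remaining implications are routine applications of Theorem~\ref{main}, the definition of $\Psi^+$, and the cited duality.
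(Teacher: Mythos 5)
Your route for the three equivalences matches the paper's: $(1)\Rightarrow(2)$ uses the universal property together with the fact that $\Psi$-limits commute in $\V$ with finite limits and with $\Psi^+$-colimits; $(2)\Rightarrow(3)$ is the duality of \cite[Theorem~3.8]{Ten22:duality} applied to the class $\Psi^+$; and $(3)\Rightarrow(1)$ rests on the inclusion $\Psi^+(\C)\subseteq\Phi^\diamond[\C]$, which holds for the simple reason you give (and which is all the paper uses): $\Psi^+$-colimits exist in every $\Phi$-exact $\V$-category and are preserved by every $\Phi$-exact $\V$-functor by definition, independently of the diagram being flat. Finite $\Psi^+$-virtual completeness then puts each $\{M,YH\}$ inside $\Phi^\diamond[\C]$, so condition (4) of Theorem~\ref{main} applies. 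Up to this point the proposal is correct and essentially identical to the paper.

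The genuine gap is in your argument for the reverse inclusion $\Phi^\diamond[\C]\subseteq\Psi^+(\C)$, which you then feed into the final equality. You take $M\in\Phi^\diamond$, note $M*Y\cong M$ for the flat Yoneda embedding $Y\colon\C\to[\C\op,\V]$, and invoke ``preservation under the localization to $\Psi^+(\C)$''. But $\Psi^+(\C)$ is not a localization of $[\C\op,\V]$: it is only the closure of the representables under $\Psi^+$-colimits, and there is no $\Phi$-exact (or even lex) reflector $[\C\op,\V]\to\Psi^+(\C)$ to which the defining property of $\Phi^\diamond$ could be applied. So that step fails as written, and the chain $\Phi_l\C=\Phi^\diamond[\C]=\Psi^+(\C)$ you use for the last assertion inherits the gap. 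The paper instead obtains $\Phi_l(\C)=\Psi^+(\C)$ by minimality of $\Phi_l\C$: under hypothesis (3), $\Psi^+(\C)$ is lex by \cite[Corollary~3.9]{Ten22:duality}, contains the representables, and is closed under $\Phi$-lex colimits (these being $\Psi^+$-colimits), hence $\Phi_l\C\subseteq\Psi^+(\C)$; combined with $\Psi^+(\C)\subseteq\Phi^\diamond[\C]\subseteq\Phi_l\C$ this yields the equality, and only then, as a consequence of (3) rather than as an input, the identification $\Phi^\diamond[\C]=\Psi^+(\C)$ you were aiming for. Note that the lexness of $\Psi^+(\C)$ genuinely uses (3), so the identification should not be expected for arbitrary $\C$.
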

\begin{proof}
	If the free $\Phi$-exact completion of $\C$ exists then $$\tx{Flat}(\C,\V)\simeq\Phi\tx{-Ex}(\Phi_l(\C),\V)$$ and has $\Psi$-limits since these commute is $\V$ with $\Psi^+$-colimits. Thus $(1)\Rightarrow(2)$ holds. For $(2)\Rightarrow(3)$ the $\V$-category $\C$ is finitely $\Psi^+$-virtually complete by \cite[Theorem~3.8]{Ten22:duality} (applied to $\mt C=\Psi^+$).
	
	$(3)\Rightarrow(1)$. The free cocompletion under $\Psi^+$-colimits $\Psi^+(\C)$ is contained in $\Phi^\diamond[\C]$ since, independently of the colimits being taken along a flat diagram, $\Psi^+$-colimits exist in any $\Phi$-exact $\V$-category and are preserved by any $\Phi$-exact $\V$-functor by definition. Thus, if $\C$ is finitely $\Psi^+$-virtually complete then $\Phi_{\tx{lext}}^\diamond[\C]$ has finite limits of representables and the free $\Phi$-exact completion exists by Theorem~\ref{main}. 
	
	The assertion that $\Phi_l(\C)=\Psi^+(\C)$ follows from the fact that $\Psi^+(\C)$ is contained in $\Phi_l(\C)$, has $\Phi$-lex colimits (by definition) and is lex by \cite[Corollary~3.9]{Ten22:duality}.
\end{proof}

\begin{es}
	The case where $\V=\bo{Set}$ and $\Psi$ be the weakly sound class of finite limits, so that $\Phi$-lex colimits are just filtered colimits; then $\Phi$-exactness has been studied in \cite[Section~5.9]{GL12:articolo}.
\end{es}

\begin{es}
	Let $\V=\bo{Cat}$ and $\Psi$ be the weakly sound class generated by connected conical limits and powers by connected categories \cite[4.8(11)]{LT22:limits}, so that $\Phi$-lex colimits are given by small coproducts (and splitting of idempotents). In this case $\Phi$-exactness determines a 2-categorical generalization of the infinitary lextensive categories from Section~\ref{ilext}. 
\end{es}

\section{Universal ultraproducts}\label{universalultra}

Given a full subcategory $\M$ of a presheaf category $[\C,\bo{Set}]$ which is closed under the construction of categorical ultraproducts (see below), it is not true in general that these satisfy an universal property in $\M$ (see~\cite{HN81}). The aim of this section is to prove that instead the ultraproducts appearing in the statements of Propositions~\ref{free-pretopos} and~\ref{free-lext} for $\M=\tx{Flat}(\C,\bo{Set})$, do satisfy an universal property, which we shall describe in Definition~\ref{univ-ultra}.

Given a category $\M$ with products, categorical ultraproducts can be described as filtered colimits of products of elements of $\M$ (see for instance \cite{Lur18:articolo}). More precisely, let $\U$ be an ultrafilter on a set $X$ and $(M_x)_{x\in X}$ be an $X$-indexed family of objects of $\M$, the {\em categorical ultraproduct of $(M_x)_{x\in X}$ over $\U$} (if it exists) is the colimit
\begin{align}\label{ultra}
	\textstyle\prod_\U M_x:=\colim_{S\in \U}\ ( \prod_{s\in S} M_s ). 
\end{align}
Note that this colimit is filtered since whenever $S,T\in\U$ then also $S\cap T\in\U$. 

This means in particular that we have a colimiting cocone
$$ (\delta_S\colon\prod_{s\in S} M_s\longrightarrow \textstyle\prod_\U M_x)_{S\in\U}$$ 
which is universal among all the other cocones over $(\textstyle\prod_{s\in S} M_s)_{S\in\U}$.

Naturally, when $\M$ does not have products the argument above does not work, leading one to define ultraproducts on a category as some additional structure. Below we exhibit a way to express the universal property of ultraproducts which avoids the necessity of dealing with products, and hence can be implemented in any category.

Let us go back to a cocone
$$ (c_S\colon \prod_{s\in S} M_s\longrightarrow C)_{S\in\U}$$ 
over the directed diagram $(\textstyle\prod_{s\in S} M_s)_{S\in\U}$, and let us embed $\M$ into $[\M\op,\bo{Set}]$ through the Yoneda embedding; then the cocone corresponds to a cocone of natural transformations
$$ (\prod_{s\in S}\M(-,M_s) \longrightarrow \M(-,C))_{S\in\U}$$
over $\U$ in $[\M\op,\bo{Set}]$. 

Note that, when $C=\textstyle\prod_\U M_x$ and the cocone is the colimiting cocone $\delta$ in $\M$, then its image $\M(-,\delta)$ in $[\M\op,\bo{Set}]$ does not give a colimiting cocone in $[\M\op,\bo{Set}]$. Nonetheless, $\M(-,\delta)$ is universal among the cocones in $[\M\op,\bo{Set}]$ whose codomain is representable (this is exactly the universal property defining the colimit in $\M$).

This approach motivates the following definition:

\begin{Def}\label{univ-ultra}
	Let $\M$ be a category and $\U$ be an ultrafilter on a set $X$. Given an $X$-indexed family of objects $(M_x)_{x\in X}$, we say that the {\em universal ultraproduct of $(M_x)_{x\in X}$ over $\U$} exists in $\M$ if
	there exists an object $\textstyle\prod_\U M_x\in\M$ together with a cocone
	$$ (\delta_S\colon\prod_{s\in S}\M(-,M_s) \longrightarrow \M(-,\textstyle\prod_\U M_x))_{S\in\U}$$ 
	over $\U$ which is universal among all other cocones with representable codomain. We say that $\M$ has {\em universal ultraproducts} if $\textstyle\prod_\U M_x$ exists for any set $X$, any ultrafilter $\U$ on $X$, and any $X$-indexed family $(M_x)_{x\in X}$ in $\M$.
\end{Def}

It other words, $\delta$ exhibits  $\textstyle\prod_\U M_x\in\M$ as the universal ultraproduct of $(M_x)_{x\in X}$ over $\U$ if and only if for any other cocone $(c_S\colon \textstyle\prod_{s\in S}\M(-,M_s) \longrightarrow \M(-,N))_{S\in\U}$ there exists a unique morphism $f\colon N\to \textstyle\prod_\U M_x$ such that $\M(-,f)\circ c_S=\delta_S$ for any $S\in\U$.

As a consequence of the universal property, if an universal ultraproduct exists in $\M$ then it is unique up isomorphism.

\begin{obs}
	This notion is equivalent to that considered by B\"{o}rger in \cite{Borger82}, by Bui and N\'emeti in \cite{HN81}, and by Rosick\'y in \cite{Ros84models}. Consider a category $\M$, an ultrafilter $\U$ on a set $X$, and a $X$-indexed family of objects $(M_x)_{x\in X}$ in $\M$. Define the category $\Sigma$ with objects triples
	$$ (N,S,u_i):=(N\in\M,S\in\U,(u_s\colon N\to M_s)_{s\in S}) $$%
	and morphisms $(N,S,u_s)\to (N',S',u'_s)$ being determined by 
	$$ (h\colon N\to N', S'\subseteq S,\ hu'_s=u_s). $$%
	This comes with a projection $\pi\colon\Sigma\to\M$. Then \cite{Borger82,HN81,Ros84models} define the universal ultraproduct of $(M_x)_{x\in X}$ over $\U$ as the colimit of $\pi$ in $\M$. It is easy to see that the universal property defining that colimit is the same as that defining our universal ultraproducts. Thus the two notions coincide.
\end{obs}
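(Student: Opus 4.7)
The statement to be proved is the equivalence asserted in this observation: namely, that our Definition~\ref{univ-ultra} of universal ultraproduct coincides with the colimit-based definition of \cite{Borger82,HN81,Ros84models}, according to which $\textstyle\prod_\U M_x$ is the colimit in $\M$ of the projection $\pi \colon \Sigma \to \M$. My plan is to unwind both universal properties into concrete data on cocones and exhibit an evident natural bijection between them, so that the two categories of cocones are isomorphic and therefore have the same terminal objects.

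First I would unpack what a cocone $\pi \to \Delta X$ with vertex $X \in \M$ amounts to: a morphism $\alpha_{(N,S,u_s)} \colon N \to X$ for each object $(N, S, (u_s)_{s \in S})$ of $\Sigma$, satisfying two compatibilities coming from the morphisms of $\Sigma$ --- (i) if $h \colon N \to N'$ and $u'_s \circ h = u_s$ on some $S' \subseteq S$ in $\U$, then $\alpha_{(N', S', u'_s)} \circ h = \alpha_{(N, S', u_s|_{S'})}$; and (ii) $\alpha_{(N, S, u_s)} = \alpha_{(N, S', u_s|_{S'})}$ for any $S' \subseteq S$ in $\U$. On the other side, a cocone in the sense of Definition~\ref{univ-ultra} is a family of natural transformations $\beta_S \colon \prod_{s \in S} \M(-, M_s) \to \M(-, X)$ in $[\M\op, \bo{Set}]$, indexed by $S \in \U$ and compatible under the restriction maps induced by inclusions $S' \subseteq S$ in $\U$. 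Evaluating $\beta_S$ at an object $N$ yields a function $\prod_{s \in S} \M(N, M_s) \to \M(N, X)$, that is, an assignment $(u_s)_{s \in S} \mapsto \beta_S(N)(u_s)$ of a morphism $N \to X$ to each family.

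Setting $\alpha_{(N, S, u_s)} := \beta_S(N)(u_s)$, and conversely, translates each dataset into the other: the naturality of $\beta_S$ in $N$ matches condition (i), and the compatibility of the $\beta_S$ across inclusions $S' \subseteq S$ matches condition (ii). This bijection is plainly compatible with morphisms of cocones (which in both settings are morphisms of the vertex respecting the structure maps), yielding an isomorphism between the two categories of cocones with fixed data $((M_x)_{x \in X}, \U)$. Hence $X$ is terminal in one if and only if it is terminal in the other, so the colimit of $\pi$ exists in $\M$ precisely when the universal ultraproduct of Definition~\ref{univ-ultra} does, and the two agree. The only real obstacle is bookkeeping: the two forms of cocone live in superficially different habitats (the base category $\M$ versus the presheaf category $[\M\op, \bo{Set}]$), and the translation cannot be reduced to a Yoneda argument applied to an honest product object since $\prod_{s \in S} M_s$ need not exist in $\M$ --- the formal products $\prod_{s \in S} \M(-, M_s)$ of representable presheaves stand in for the missing products, and one must work with them directly.
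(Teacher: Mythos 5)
Your proof is correct and fills in, by the same direct unwinding of cocones that the paper's ``it is easy to see'' gestures at, the bijection between cocones over $\pi$ and cocones over the $\U$-indexed diagram with representable vertex; the identification $\alpha_{(N,S,u_s)}=\beta_S(N)(u_s)$, with naturality in $N$ matching the $h$-condition and compatibility over $S'\subseteq S$ matching the restriction condition, is exactly the intended argument. The only slip is that a colimiting cocone is the \emph{initial} (not terminal) object of the category of cocones, but since your correspondence is an isomorphism of categories this does not affect the conclusion.
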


\begin{obs}
	Let $\M$ have universal ultraproducts; for any ultrafilter $\U$ on a set $X$ we can define a functor
	$$ \textstyle\prod_\U(-)\colon \M^X\longrightarrow\M $$
	given pointwise by sending a family $(M_x)_{x\in X}$ to its universal ultraproduct $\textstyle\prod_\U M_x$. It is easy, but technical, to see that this actually extends to an {\em ultrastructure} on $\M$ in the sense of Lurie~\cite{Lur18:articolo} and of Di Liberti~\cite{di2022geometry}. We also expect this functor to induce an ultrastructure on $\M$ in the original sense defined by Makkai~\cite{Mak87:articolo}, but we leave that for the readers to verify.
\end{obs}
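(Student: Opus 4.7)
The plan is to exploit the universal property in Definition~\ref{univ-ultra} twice: once to define the action of $\textstyle\prod_\U(-)$ on morphisms of $\M^X$, and once (in greater generality) to construct the transition data making the assignment $\U\mapsto\textstyle\prod_\U(-)$ into an ultrastructure. All functoriality axioms, and all coherence axioms of the ultrastructure, will then reduce to the uniqueness clause built into Definition~\ref{univ-ultra}.

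For functoriality, fix a morphism $(f_x\colon M_x\to N_x)_{x\in X}$ in $\M^X$ and let $(\delta^M_S)_{S\in\U}$, $(\delta^N_S)_{S\in\U}$ be the universal cocones over $\U$ associated with the two families, with codomains $\M(-,\textstyle\prod_\U M_x)$ and $\M(-,\textstyle\prod_\U N_x)$. The natural transformations $\textstyle\prod_{s\in S}\M(-,f_s)$ assemble, after postcomposition with $\delta^N_S$, into a cocone over $(\textstyle\prod_{s\in S}\M(-,M_s))_{S\in\U}$ with representable codomain. By Definition~\ref{univ-ultra} there is then a unique morphism $\textstyle\prod_\U f_x\colon \textstyle\prod_\U M_x\to\textstyle\prod_\U N_x$ in $\M$ realizing the factorization, and I define $\textstyle\prod_\U(-)$ on morphisms to be this assignment. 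Preservation of identities and of composites is immediate: in each case two a priori different morphisms of $\M$ yield, via Yoneda, two factorizations of the same cocone through $(\delta^M_S)_{S\in\U}$, hence coincide.

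Extending to an ultrastructure in the sense of Lurie~\cite{Lur18:articolo} and Di~Liberti~\cite{di2022geometry} amounts to producing, for each morphism of ultrafilters, a natural comparison map between the appropriate iterated ultraproducts, and to verifying coherence axioms for their composition (associativity of iterated ultraproducts, compatibility with principal/diagonal ultrafilters, naturality in the underlying family). The plan is to construct each such transition by the same recipe as above: given a morphism of ultrafilters $\phi$, I would assemble a suitable cocone with representable codomain out of nested universal cocones $\delta$ obtained by iteratively applying Definition~\ref{univ-ultra}, and then invoke the universal property of the outer ultraproduct to produce the required morphism in $\M$. Coherence then reduces, diagram by diagram, to the observation that two candidate morphisms both factor the same cocone through the same universal one, and so must agree.

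The main obstacle is not conceptual but expository: matching the precise list of transition maps and coherence diagrams of~\cite{Lur18:articolo,di2022geometry} with specifically engineered cocones over $\U$-indexed products of representables is bookkeeping-heavy even though each individual step is a one-line application of the universal property; this is what the statement refers to as ``easy, but technical''. Verifying that the resulting data also constitute an ultrastructure in Makkai's original sense~\cite{Mak87:articolo} would additionally require translating between the presentations, and the genuinely subtle point there is naturality against Makkai's diagonal axiom, which is why the statement leaves that check to the reader.
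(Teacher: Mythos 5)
The paper gives no argument for this remark (it is explicitly left to the reader), so there is no proof to compare against; your proposal fills in the functoriality of $\textstyle\prod_\U(-)$ correctly and completely, and your strategy for the ultrastructure --- manufacture each structure map by factoring a cocone with representable codomain through a universal cocone, and extract all coherence from the uniqueness clause of Definition~\ref{univ-ultra} --- is exactly the intended one.

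One caution for when you carry out the ``bookkeeping'': the universal property of Definition~\ref{univ-ultra} only lets you construct morphisms \emph{out of} a universal ultraproduct, so each of Lurie's structure maps has to be obtained from the universal property of its \emph{domain}. This is consistent with the conventions of \cite{Lur18:articolo,di2022geometry}, where the Fubini transformation is directed from the ultraproduct over the integrated ultrafilter towards the iterated ultraproduct (and need not be invertible): the cocone to be factored then has entries $\textstyle\prod_{t\in T_0}\M(-,M_t)\to\M(-,N)$ with $N$ the iterated universal ultraproduct, assembled from product projections and the nested universal cocones, so its codomain is representable and Definition~\ref{univ-ultra} applies. If you tried to orient the map the other way, the cocone to be factored would have entries that are products of homs into the inner universal ultraproducts, and these carry no colimit universal property in $[\M\op,\bo{Set}]$, so the one-line argument would stall there. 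The comparison with principal ultrafilters is handled as you suggest, by playing the component $\delta_{\{x\}}$ of the universal cocone against the cocone of projections and invoking uniqueness in both directions.
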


The result below is an immediate consequence of the definition and the arguments above.

\begin{prop}
	If $\M$ has products and filtered colimits then it has universal ultraproducts, and they coincide with those defined using products and filtered colimits (\ref{ultra}).
\end{prop}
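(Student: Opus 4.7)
The plan is to verify that the categorical ultraproduct $\textstyle\prod_\U M_x := \colim_{S\in\U} \prod_{s\in S} M_s$ constructed as in (\ref{ultra}) satisfies the universal property of Definition~\ref{univ-ultra}, using that the Yoneda embedding $Y\colon \M\hookrightarrow [\M\op,\bo{Set}]$ is fully faithful and preserves the products that exist in $\M$.

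First, I would produce the candidate cocone. Since $\M$ has products and filtered colimits, the colimit in (\ref{ultra}) exists in $\M$ with colimiting cocone $\delta^0_S\colon \prod_{s\in S} M_s\to \textstyle\prod_\U M_x$. Applying $Y$ and composing with the canonical iso $Y(\prod_{s\in S}M_s)\cong \prod_{s\in S}Y M_s$ (which is legitimate because $\M$ has these products, so $Y$ preserves them), one obtains the cocone
\[
\delta_S\colon \prod_{s\in S}\M(-,M_s)\xrightarrow{\cong}\M\bigl(-,\textstyle\prod_{s\in S}M_s\bigr)\xrightarrow{\M(-,\delta^0_S)}\M\bigl(-,\textstyle\prod_\U M_x\bigr)
\]
over $S\in\U$ required by Definition~\ref{univ-ultra}.

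Next, I would verify the universal property. Given any cocone with representable codomain
\[
c_S\colon \prod_{s\in S}\M(-,M_s)\longrightarrow \M(-,N),
\]
precomposing with the product isomorphism above yields a cocone $c_S'\colon \M(-,\prod_{s\in S}M_s)\to \M(-,N)$ in $[\M\op,\bo{Set}]$. By the Yoneda lemma (fully faithfulness of $Y$), each $c_S'$ corresponds to a unique arrow $c_S''\colon \prod_{s\in S}M_s\to N$ in $\M$, and the compatibility of the $c_S$ with the transition maps of the $\U$-indexed diagram translates, again by faithfulness of $Y$, into the statement that $(c_S'')_{S\in\U}$ is a cocone in $\M$ over the filtered diagram $S\mapsto \prod_{s\in S}M_s$. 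The universal property of the colimit $\textstyle\prod_\U M_x$ in $\M$ then produces a unique $f\colon \textstyle\prod_\U M_x\to N$ with $f\circ\delta^0_S=c_S''$ for all $S\in\U$; applying $Y$ gives $\M(-,f)\circ\delta_S=c_S$, and uniqueness transfers along $Y$ by the Yoneda lemma.

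The argument is entirely formal once the Yoneda translation is set up, so I do not anticipate a real obstacle; the only point requiring a little care is to note that although the cocones in Definition~\ref{univ-ultra} live in the presheaf category, restricting attention to representable codomains and using that $Y$ is fully faithful and preserves the \emph{existing} products $\prod_{s\in S}M_s$ reduces everything to the ordinary colimit universal property in $\M$. Consequently the two notions of ultraproduct coincide whenever products and filtered colimits are available.
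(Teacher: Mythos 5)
Your proof is correct and follows essentially the same route as the paper, which states the proposition as ``an immediate consequence of the definition and the arguments above'' --- those arguments being exactly the Yoneda translation you carry out: the colimiting cocone of $\colim_{S\in\U}\prod_{s\in S}M_s$ in $\M$ is, by definition of colimit and since $Y$ preserves the existing products, universal among cocones in $[\M\op,\bo{Set}]$ with representable codomain. You have merely filled in the details the paper leaves implicit.
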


The following lemma will be important to recognize when universal ultraproducts exist.

\begin{lema}\label{ultra-close}
	Let $\K$ be a category with universal ultraproducts and let $J\colon\M\hookrightarrow\K$ be a full and dense subcategory of $\K$. If every universal ultraproduct of the form $\textstyle\prod_\U JM_x$ in $\K$, with $M_x\in\M$, lies in $\M$, then $\M$ has universal ultraproducts and $J$ preserves them.
\end{lema}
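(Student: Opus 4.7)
The plan is to transfer the universal property from $\K$ to $\M$ using the density of $J$. Given a family $(M_x)_{x\in X}$ in $\M$ and an ultrafilter $\U$ on $X$, by hypothesis the universal ultraproduct $\prod_\U JM_x$ exists in $\K$ and has the form $JM$ for some $M\in\M$, unique up to iso by fully faithfulness of $J$. Let $(\delta'_S)_{S\in\U}$ be its universal cocone in $[\K\op,\bo{Set}]$. Since both products and filtered colimits of presheaves are computed pointwise, precomposing with $J$ and using the identifications $\K(J-,JM_s)\cong\M(-,M_s)$ and $\K(J-,JM)\cong\M(-,M)$ produces a cocone $(\delta_S)_{S\in\U}$ in $[\M\op,\bo{Set}]$ over $\M(-,M)$. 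The claim is then that $M$ together with $\delta$ is the universal ultraproduct of $(M_x)_{x\in X}$ in $\M$.

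The key technical step is to identify, for each fixed $N\in\M$, cocones over $\U$ from $\prod_{s\in S}\M(-,M_s)$ to $\M(-,N)$ in $[\M\op,\bo{Set}]$ with cocones from $\prod_{s\in S}\K(-,JM_s)$ to $\K(-,JN)$ in $[\K\op,\bo{Set}]$. For this I would invoke the standard fact that density of $J$ is equivalent to the nerve functor $\K\to[\M\op,\bo{Set}]$, $L\mapsto\K(J-,L)$, being fully faithful, which in turn is equivalent to the canonical comparison $\K(-,L)\to\tx{Ran}_{J\op}\K(J-,L)$ being an isomorphism for every $L\in\K$. Specialized to $L=JN$ this gives $\K(-,JN)\cong\tx{Ran}_{J\op}\M(-,N)$. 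Combined with the adjunction $[J\op,\bo{Set}]\dashv\tx{Ran}_{J\op}$ and the fact that precomposition with $J$ preserves both products and colimits, one obtains, for each $S\in\U$, a natural bijection between the two hom-sets in question, and hence (by taking limits over $\U$) between the two sets of cocones.

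Combining this identification with the universal property of $JM=\prod_\U JM_x$ in $\K$ and with fully faithfulness of $J$,
\[
\M(M,N)\cong\K(JM,JN)\cong\{\U\text{-cocones in }[\K\op,\bo{Set}]\text{ to }\K(-,JN)\}\cong\{\U\text{-cocones in }[\M\op,\bo{Set}]\text{ to }\M(-,N)\},
\]
which is exactly the universal property required of $M$ in $\M$. Preservation of this universal ultraproduct by $J$ is immediate from the very construction $JM\cong\prod_\U JM_x$. The step I expect to require the most care is confirming that the above chain of natural bijections is implemented concretely by post-composition with the cocone $\delta$ (and not by some other natural map); this reduces to a routine diagram chase through the units and counits of the density isomorphism $\K(-,JN)\cong\tx{Ran}_{J\op}\M(-,N)$ and of the adjunction $[J\op,\bo{Set}]\dashv\tx{Ran}_{J\op}$.
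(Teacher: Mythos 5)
Your proposal is correct and follows essentially the same route as the paper's proof: both restrict the colimiting cocone of $\textstyle\prod_\U JM_x$ along $J$ to obtain the candidate cocone $\delta$, and both use the codensity of $J\op$ (equivalently, that $\tx{Ran}_{J\op}$ is fully faithful, continuous, and preserves representables, so that $\K(-,JN)\cong\tx{Ran}_{J\op}\M(-,N)$) to identify $\U$-cocones into $\M(-,N)$ in $[\M\op,\bo{Set}]$ with $\U$-cocones into $\K(-,JN)$ in $[\K\op,\bo{Set}]$, before transferring the universal property from $\K$ to $\M$. The only cosmetic difference is that you package this identification as an adjunction-induced bijection of hom-sets, whereas the paper phrases it as every cocone in $\K$ of the relevant form lying in the image of $\tx{Ran}_{J\op}$; these amount to the same verification.
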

\begin{proof}
	Let $\U$ be an ultrafilter on a set $X$ and $(M_x)_{x\in X}$ be an $X$-indexed family of objects in $\M$. By hypothesis there is $M\in\M$ such that $JM\cong \textstyle\prod_\U JM_x$; we need to show that $M\cong \textstyle\prod_\U M_x$ in $\M$.
	
	By fully faithfulness of $J$, the colimiting cocone defining $\textstyle\prod_\U JM_x$ in $\K$ gives a unique cocone
	$$ (\delta_S\colon\prod_{s\in S}\M(-,M_s) \longrightarrow \M(-,M))_{S\in\U}$$ 
	over $\U$ in $[\M\op,\bo{Set}]$. We need to prove that it satisfies the universal property defining the universal ultraproduct. 
	
	Note that, since $J$ is dense, then $J\op$ is codense and by the dual of \cite[5.31]{Kel82:libro} the functor
	$$ \tx{Ran}_{J\op}\colon [\M\op,\bo{Set}]\longrightarrow  [\K\op,\bo{Set}]$$
	is fully faithful, continuous, and preserves the representables. Thus every cocone
	$$(c_S\colon \textstyle\prod_{s\in S}\M(-,M_s) \longrightarrow \M(-,N))_{S\in\U}$$
	extend to a cocone 
	$$(d_S\colon \textstyle\prod_{s\in S}\K(-,JM_s) \longrightarrow \K(-,JN))_{S\in\U}$$
	in $\K$ defined by $d_S:=\tx{Ran}_J(c_S)$. Moreover every cocone over $\U$ in $\K$ as above lies in the image of $\tx{Ran}_{J\op}$ (again by \cite[5.31]{Kel82:libro} since the functors involved preserve any existing limits) making it the right Kan extension of its restriction to $\M$.
	Now, given any $c=(c_S)_{S\in\U}$ as above, by the universal property of universal ultraproducts in $\K$ applied to $d=(d_S)_{S\in\U}$, there is a unique map $Jf\colon \textstyle\prod_\U JM_x\to JN$ for which precomposition with $\K(-,Jf)$ maps the colimiting cocone of $\textstyle\prod_\U JM_x$ into the cocone $d$. Restricting along $J$, the map $f$ also satisfies $\M(-,f)\circ c_S=\delta_S$. Such morphism $f$ is unique because whenever $\M(-,f)\circ c_S=\M(-,g)\circ c_S$ then also $\K(-,Jf)\circ d_S=\K(-,Jg)\circ d_S$; thus $Jf=Jg$ by the universal property in $\K$, and hence $f=g$. 
	
	In conclusion $M$ is the universal ultraproduct of the $M_s$ and is preserved by $J$.
\end{proof}

\begin{prop}
	Let $\M$ be a category and $\C$ be a small and dense subcategory of $\M$, so that there is an inclusion $\M\hookrightarrow[\C\op,\bo{Set}]$. The following are equivalent: \begin{enumerate}\setlength\itemsep{0.25em}
		\item $\M$ has universal ultraproducts and they are preserved by $\M(C,-)$ for any $C\in\C$;
		\item $\M$ is closed in $[\C\op,\bo{Set}]$ under categorical ultraproducts.
	\end{enumerate}
\end{prop}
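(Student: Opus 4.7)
My plan is to view the proposition as a bridge between universal ultraproducts in $\M$ and categorical ultraproducts in the cocomplete presheaf category $[\C\op,\bo{Set}]$, and to reduce $(2)\Rightarrow(1)$ to Lemma~\ref{ultra-close}. As preliminaries, density of $\C$ in $\M$ yields a fully faithful inclusion $J\colon\M\hookrightarrow[\C\op,\bo{Set}]$ given by $JM:=\M(i-,M)$, where $i\colon\C\hookrightarrow\M$ is the inclusion; since the essential image of $J$ contains all representables $\C(-,C)=J(C)$ for $C\in\C$, the subcategory $J\M$ is itself dense in $[\C\op,\bo{Set}]$. Moreover $[\C\op,\bo{Set}]$ has all products and filtered colimits, so by the preceding proposition it has universal ultraproducts that agree with the categorical ones computed pointwise through~(\ref{ultra}); in particular, each evaluation $\tx{ev}_C\colon[\C\op,\bo{Set}]\to\bo{Set}$ at $C\in\C$ preserves universal ultraproducts.

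For $(1)\Rightarrow(2)$, I would take an ultrafilter $\U$ on a set $X$, a family $(M_x)_{x\in X}$ in $\M$, and the universal ultraproduct $P:=\textstyle\prod_\U M_x$ with cocone $(\delta_S)_{S\in\U}$. Preservation by $\M(C,-)$ for each $C\in\C$ gives that $((\delta_S)_C)_{S\in\U}$ exhibits $\M(C,P)$ as the categorical ultraproduct of the $\M(C,M_x)$ in $\bo{Set}$. Since categorical ultraproducts in $[\C\op,\bo{Set}]$ are computed pointwise, this simultaneously identifies $JP$ (together with the cocone $\delta$ restricted along $i$) as the categorical ultraproduct of the $JM_x$, which is precisely the closure asserted in~(2).

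For $(2)\Rightarrow(1)$, I would apply Lemma~\ref{ultra-close} to $\K:=[\C\op,\bo{Set}]$ and the dense fully faithful $J\colon\M\hookrightarrow\K$ above. Hypothesis~(2) states exactly that every categorical (hence universal) ultraproduct $\textstyle\prod_\U JM_x$ in $\K$ lies in the essential image of $J$, so the lemma provides universal ultraproducts in $\M$ that are preserved by $J$. For each $C\in\C$, the factorization $\M(C,-)=\tx{ev}_C\circ J$, together with preservation by $\tx{ev}_C$ established above, then implies that $\M(C,-)$ preserves these universal ultraproducts as well.

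The main point to watch is keeping straight the two universal properties at play (universal versus categorical ultraproducts) and verifying that the natural cocone arising from density transports correctly between $\M$ and $[\C\op,\bo{Set}]$; because the presheaf category carries enough limits and colimits for the two notions to coincide and to be computed pointwise, the bulk of the argument reduces to a clean invocation of Lemma~\ref{ultra-close} plus the pointwise formula for colimits.
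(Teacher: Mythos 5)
Your proposal is correct and follows essentially the same route as the paper: the key observations are that universal ultraproducts in $[\C\op,\bo{Set}]$ coincide with the pointwise categorical ones (so preservation by $\M(C,-)$ for all $C$ is equivalent to the universal ultraproducts of $\M$ being computed pointwise), and that $(2)\Rightarrow(1)$ is an application of Lemma~\ref{ultra-close} to the dense inclusion $J\colon\M\hookrightarrow[\C\op,\bo{Set}]$. Your write-up merely makes explicit the details the paper leaves implicit.
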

\begin{proof}
	In $\bo{Set}$ universal ultraproducts are the usual categorical ultraproducts; thus the universal ultraproducts of $\M$ are preserved by homming out of $C\in\C$ if and only if they are computed pointwise in $[\C\op,\bo{Set}]$. Then the result follows from Lemma~\ref{ultra-close} above.
\end{proof}

Some direct consequences are:

\begin{cor}
	Let $\tx{Mod}(\C,\bo{Set})$ be the category of models of a pretopos $\C$ such that, when seen as a full subcategory of $[\C,\bo{Set}]$, it contains the representables. Then $\tx{Mod}(\C,\bo{Set})$ has universal ultraproducts and they are preserved by the inclusion.
\end{cor}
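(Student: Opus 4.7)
The plan is to invoke the preceding proposition with $\M:=\tx{Mod}(\C,\bo{Set})$ and with $\C\op$ in the role of the small dense subcategory, so that the ambient presheaf category is $[\C,\bo{Set}]$. The hypothesis that $\M$ contains the representables gives a fully faithful $\C\op\hookrightarrow\M$, $C\mapsto\C(C,-)$; this is dense because $\M(\C(C,-),F)\cong\tx{Nat}(\C(C,-),F)\cong F(C)$ naturally in $C$ for every model $F$, so that the induced functor $\M\to[\C,\bo{Set}]$ coincides with the tautological fully faithful inclusion.

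It then suffices to verify condition $(2)$ of the proposition: that $\M$ is closed in $[\C,\bo{Set}]$ under categorical ultraproducts. Since products and filtered colimits in $[\C,\bo{Set}]$ are computed pointwise, the categorical ultraproduct of a family $(F_x)_{x\in X}$ of models over an ultrafilter $\U$ is simply the functor $C\mapsto\textstyle\prod_\U F_x(C)$, where the right-hand side is the ordinary ultraproduct in $\bo{Set}$. The task therefore reduces to checking that set-theoretic ultraproducts preserve the three pieces of structure defining a model: finite limits, finite coproducts, and regular epimorphisms. Preservation of finite limits is classical, as they commute in $\bo{Set}$ with arbitrary products and with filtered colimits. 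For finite coproducts, the ultrafilter property splits any element of $\textstyle\prod_\ell(A_\ell+B_\ell)$ into one eventually in $A_\ell$ or eventually in $B_\ell$, yielding the canonical bijection $\textstyle\prod_\U(A_\ell+B_\ell)\cong \textstyle\prod_\U A_\ell+\textstyle\prod_\U B_\ell$. Regular epimorphisms in $\bo{Set}$ are just surjections, and surjectivity is stable under arbitrary products (by the axiom of choice) and under filtered colimits.

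With these three preservation properties in hand, the pointwise ultraproduct of models is again a model, so $\M$ is closed in $[\C,\bo{Set}]$ under categorical ultraproducts. The preceding proposition then delivers universal ultraproducts in $\M$ that are preserved by each evaluation $\M(\C(C,-),-)$; this preservation is exactly the statement that they agree pointwise with the categorical ultraproducts in $[\C,\bo{Set}]$, i.e.\ that they are preserved by the inclusion $\M\hookrightarrow[\C,\bo{Set}]$. No substantial obstacle is anticipated; the only mildly delicate check is the coproduct case, which is handled by the ultrafilter-splitting argument sketched above.
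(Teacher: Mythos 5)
Your proposal is correct and follows essentially the same route as the paper: both rest on the density supplied by the representables together with the well-known closure of $\tx{Mod}(\C,\bo{Set})$ under pointwise ultraproducts in $[\C,\bo{Set}]$, and then invoke the preceding general result (you cite the proposition, the paper cites Lemma~\ref{ultra-close}, of which that proposition is a direct consequence). Your explicit verification of the closure fact via preservation of finite limits, finite coproducts, and surjections under ultraproducts in $\bo{Set}$ is a correct expansion of what the paper leaves as ``well known.''
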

\begin{proof}
	It is well known that $\tx{Mod}(\C,\bo{Set})$ satisfies the closure property under ultraproducts in $[\C,\bo{Set}]$. If it also contains the representables then $\tx{Mod}(\C,\bo{Set})$ is dense in $[\C,\bo{Set}]$. Thus it is enough to apply the lemma above. 
\end{proof}

For the following just choose $\C:=\M_f$ in the setting of the proposition above. 

\begin{cor}\label{flat-ultraproducts}
	The following are equivalent for a finitely accessible category $\M$: \begin{enumerate}\setlength\itemsep{0.25em}
		\item $\M$ has universal ultraproducts and they are preserved by $\M(A,-)$ for any $A\in\M_f$;
		\item $\M$ is closed in $[\M_f\op,\bo{Set}]$ under categorical ultraproducts.
	\end{enumerate}
\end{cor}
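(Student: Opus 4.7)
The plan is to deduce this corollary directly from the previous proposition by specializing the small dense subcategory $\C$ to $\M_f$, the full subcategory of finitely presentable objects of $\M$. The only things to check are that the hypotheses of that proposition are satisfied.

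First I would recall that, by definition of finitely accessible category, $\M_f$ is essentially small and every object of $\M$ is a filtered colimit of finitely presentable objects. Standard theory (see for instance \cite{AR94:libro}) then gives that the canonical restricted Yoneda embedding
\[ \M\longhookrightarrow [\M_f\op,\bo{Set}],\qquad M\mapsto \M(-,M)|_{\M_f} \]
is fully faithful, and that $\M_f$ is dense in $\M$. Thus $\C:=\M_f$ fits exactly the setting of the proposition above.

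With these observations in hand, the corollary is a direct application: the proposition asserts that, for any small dense $\C\subseteq\M$, condition (1) (that $\M$ has universal ultraproducts preserved by $\M(C,-)$ for all $C\in\C$) is equivalent to condition (2) (that $\M$ is closed in $[\C\op,\bo{Set}]$ under categorical ultraproducts). Substituting $\C=\M_f$ yields exactly the two conditions in the statement, so there is nothing further to prove.

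There is no real obstacle here; the work has been done upstream, first in Lemma~\ref{ultra-close} (which characterizes universal ultraproducts in a full dense subcategory in terms of the ambient ones) and then in the proposition that instantiates this for $\bo{Set}$-valued presheaves. The corollary merely packages the finitely accessible case, where $\M_f$ gives a canonical such small dense subcategory.
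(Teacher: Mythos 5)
Your proposal is correct and matches the paper exactly: the corollary is obtained by taking $\C:=\M_f$ in the preceding proposition, using that $\M_f$ is essentially small and dense in a finitely accessible $\M$. Your additional verification of the density hypothesis is a reasonable (if routine) elaboration of what the paper leaves implicit.
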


Note that this extends the equivalent conditions of Proposition~\ref{free-pretopos} by taking $\M=\tx{Flat}(\C,\bo{Set})$.

\begin{obs}
	Given a finitely accessible category $\M$ with universal ultraproducts, we do not know whether these are always preserved as in (1) above.
\end{obs}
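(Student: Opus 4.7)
The statement is not a theorem but an explicit acknowledgment that a question remains open, so strictly speaking there is nothing to prove; the most useful thing to propose is a strategy for settling it in one direction or the other. I would attack the problem from both sides in parallel, since at present it is not clear which answer to expect.

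For a positive resolution, I would fix a finitely accessible $\M$ with universal ultraproducts, a family $(M_x)_{x\in X}$, an ultrafilter $\U$ on $X$, and try to show that the natural comparison
$$ \colim_{S\in\U}\, \prod_{s\in S}\M(A,M_s) \longrightarrow \M\bigl(A,\textstyle\prod_\U M_x\bigr) $$
is a bijection for every $A\in\M_f$. The universal property of Definition~\ref{univ-ultra} supplies such a comparison only when the receiving functor is a representable $\M(-,N)$. The plan is to exploit that $A\in\M_f$, so that $\M(A,-)$ commutes with filtered colimits in $\M$, and then try to recognize the source of the comparison as the value at $A$ of some cocone with representable codomain, which the universal property would then force to factor uniquely through the universal cocone $\delta$. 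The sticky point is that $\prod_{s\in S}\M(-,M_s)$ is a product of representables in $[\M\op,\bo{Set}]$ and typically not itself representable, so one cannot appeal directly to Yoneda to identify the comparison map; one would have to leverage density of $\M_f$ in $\M$ and the filteredness of the ultrafilter diagram to dismantle the products cofinally into representable pieces.

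For a negative resolution, I would search for a finitely accessible $\M$ carrying an object $P$ that satisfies the defining universal property of $\textstyle\prod_\U M_x$ in the sense of Definition~\ref{univ-ultra} but such that $\M(A,P)$ fails to agree with $\colim_S\prod_{s\in S}\M(A,M_s)$ for some $A\in\M_f$. Natural candidates arise from categories of the form $\tx{Flat}(\C,\bo{Set})$ where $\C$ does not have finite fc-limits, so that by Proposition~\ref{free-pretopos} the pointwise categorical ultraproduct in $[\C,\bo{Set}]$ does not lie in $\M$; one would then ask whether $\M$ admits some alternative universal object, perhaps engineered by a $\Phi$-lex colimit closure of the relevant diagram, satisfying the weak universal property of Definition~\ref{univ-ultra} without the stronger pointwise one.

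The main obstacle in either direction is the same asymmetry: the universal property of $\textstyle\prod_\U M_x$ involves only representable receivers in $[\M\op,\bo{Set}]$, which is strictly weaker than being a pointwise colimit after Yoneda. Bridging this gap for a proof would seem to require a new commutation theorem tailored to finitely accessible categories, while ruling it out by counterexample would require an explicit construction of an \emph{exotic} ultraobject satisfying the weak universal property but not the pointwise one. Both tasks look delicate, and it is precisely this delicacy that the authors are flagging by recording the question as open rather than settling it here.
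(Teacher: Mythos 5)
This statement is a remark recording an open question, so the paper offers no proof for you to be measured against; you are right that there is nothing to prove, and your reading of the situation is accurate. In particular, you correctly locate the core difficulty exactly where the paper's definitions place it: Definition~\ref{univ-ultra} only constrains cocones with representable codomain, which is strictly weaker than the pointwise condition of Corollary~\ref{flat-ultraproducts}(1), and neither your positive strategy (forcing $\M(A,-)$ with $A\in\M_f$ to preserve the colimit) nor your negative one (an exotic ultraobject in some $\tx{Flat}(\C,\bo{Set})$ lacking finite fc-limits) is ruled out by anything in the paper.
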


\begin{obs}
	In \cite{Ros84models}, a theory $\mathbb T$ over a relational language $\mathbb L$ is called {\em strongly special} if the inclusion $\tx{Mod}(\mathbb T)\hookrightarrow \mathbb L\tx{-Str}$, of the models of $\mathbb T$ into the $\mathbb L$-structures, is fc-reflective (in the sense explained in Remark~\ref{fc-refl}). Then \cite[Proposition 3.3]{Ros84models} shows that $\tx{Mod}(\mathbb T)$ has universal ultraproducts whenever $\mathbb T$ is strongly special. 
	
	Now, given $\M$ as in Corollary~\ref{flat-ultraproducts} above, then $\M$ can be expressed as $\tx{Mod}(\mathbb T)$ for a coherent theory $\mathbb T$ in the language of presheaves on $\M_f\op$ (\cite[4.2]{beke2005flatness}); however, this theory is not strongly special in general (see \cite[4.6]{beke2005flatness} for a counterexample). Thus our corollary does not follow from \cite{Ros84models}. Conversely, \cite[Proposition 3.3]{Ros84models} does not follow from our results since $\tx{Mod}(\mathbb T)$ is not finitely accessible in general.
\end{obs}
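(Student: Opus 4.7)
The observation amounts to two non-implications, and my plan is to argue each one by exhibiting (or citing) a suitable example that falls inside one framework but outside the other.

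For the first non-implication---that Corollary~\ref{flat-ultraproducts} does not follow from \cite[Proposition 3.3]{Ros84models}---the plan is to invoke Beke--Rosick\'y's presentation of a finitely accessible $\M$ as $\tx{Mod}(\mathbb T)$ for a coherent theory $\mathbb T$ in the presheaf language on $\M_f\op$, as in \cite[4.2]{beke2005flatness}. Being a coherent theory, the inclusion $\tx{Mod}(\mathbb T)\hookrightarrow \mathbb L\tx{-Str}$ is the same as the inclusion of $\M$ into $[\M_f\op,\bo{Set}]$, and the strong specialness condition is exactly the fc-reflectivity of this inclusion (in the sense of Remark~\ref{fc-refl}). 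The counterexample in \cite[4.6]{beke2005flatness} then provides a finitely accessible $\M$ for which this inclusion fails to be fc-reflective; for this $\M$, the hypotheses of Rosick\'y's Proposition 3.3 are not satisfied, yet (assuming $\M$ is closed in $[\M_f\op,\bo{Set}]$ under ultraproducts, which is all the corollary requires) Corollary~\ref{flat-ultraproducts} still delivers universal ultraproducts. This shows the two results cover genuinely different situations on one side.

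For the second non-implication---that \cite[Proposition 3.3]{Ros84models} does not follow from Corollary~\ref{flat-ultraproducts}---the plan is to exhibit a strongly special relational theory $\mathbb T$ whose category of models is not finitely accessible. A concrete route is to choose a relational language $\mathbb L$ and axioms whose Horn fragment forces the class of models to fail the joint embedding / directed colimit properties required of a finitely accessible category; for instance, a theory where the models form a class known to not be finitely accessible (such as certain classes of relational structures closed under substructures and products but without a small dense subcategory of finitely presentable objects). Provided the inclusion $\tx{Mod}(\mathbb T)\hookrightarrow\mathbb L\tx{-Str}$ is fc-reflective---which can be arranged by choosing $\mathbb T$ axiomatised by strongly special sentences in Rosick\'y's sense---Proposition 3.3 of \cite{Ros84models} supplies universal ultraproducts while our corollary cannot be invoked.

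The main obstacle I expect is the second direction: producing a strongly special theory whose model category is provably not finitely accessible requires a careful example, since many natural strongly special theories (equational or essentially algebraic ones) do give locally finitely presentable, hence finitely accessible, categories. I would look for the example inside \cite{Ros84models} itself, where strongly special theories are compared with various model-theoretic classes; any such example with models not preserved by filtered colimits of $\mathbb L$-structures, or lacking a small dense subcategory, would suffice. Once both examples are in place, the observation follows immediately by contrapositive reasoning on the two universal properties.
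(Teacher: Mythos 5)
Your proposal is correct and follows essentially the same route as the paper: both non-implications are established by the same citations, namely the counterexample in \cite[4.6]{beke2005flatness} showing the coherent theory of a finitely accessible $\M$ (even one satisfying the corollary's hypotheses) need not be strongly special, and the observation that strongly special theories can have model categories that are accessible but not finitely accessible. Your honesty that the second direction requires a concrete example is fair, but note the paper itself only asserts this ``in general'' by reference to \cite{Ros84models} without exhibiting one, so you are at the same level of rigor as the original remark.
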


\appendix

\section{Finite-cone orthogonality and injectivity}\label{fc-orth-inj}

Categories of models of regular theories (those of the form $\tx{Reg}(\C,\bo{Set})$ for some small regular $\C$) can be characterized as the finite injectivity classes of locally finitely presentable categories \cite{KR18:articolo}. In this section we give a similar characterizations for the categories of models of lextensive categories and of pretopoi by considering notions of finite-cone orthogonality and finite-cone injectivity classes. As a consequence the categories $\tx{Flat}(\C,\bo{Set})$ considered in Sections~\ref{fc} and~\ref{lextesive-sect} can then be captured under these generalized orthogonality and injectivity conditions.

We shall need some fact about categories of fractions \cite[Ch.I]{GZ67:libro}; that is, categories obtained by inverting a class of arrows $\Sigma$ in a (lex) category $\C$. Recall from \cite[Definition~1.1]{Ben89:articolo} that $\Sigma$ is called a {\em pullback congruence} if it contains all the isomorphisms, satisfies the two out of three property, and is stable under pullback. If $\C$ is a regular category, a pullback congruence $\Sigma$ in $\C$ is called {\em regular} if it is local \cite[Definition~2.1.6]{Ben89:articolo}.

When $\C$ is a lextensive category we will need closure under coproducts: a {\em lextensive congruence} on $\C$ is a class $\Sigma$ of maps from $\C$ which is a pullback congruence and is closed under finite coproducts (if $s,t\in\Sigma$ then $s+t\in\Sigma$).

\begin{prop}[\cite{GZ67:libro,Ben89:articolo,aravantinos2017property}]\label{frac}
	Let $\C$ be category with pullbacks and $\Sigma$ a pullback congruence on $\C$, then there exists a category $\C[\Sigma^{-1}]$ together with a functor $$P\colon\C\to \C[\Sigma^{-1}]$$ such that any functor $F\colon\C\to\B$ factors uniquely through $P$ as $F=F_{\Sigma}\circ P$ if and only if $F$ inverts the elements of $\Sigma$. Moreover:
	\begin{enumerate}\setlength\itemsep{0.25em}
		\item If $\C$ is a lex category, then $\C[\Sigma^{-1}]$ is lex, $P$ is a lex functor, and $F_{\Sigma}$ is such if and only if $F$ is.
		\item If $\C$ is regular and $\Sigma$ is a regular congruence, then $\C[\Sigma^{-1}]$ is a regular category, $P$ is a regular functor, and $F_{\Sigma}$ is such if and only if $F$ is.
		\item If $\C$ is lextensive and $\Sigma$ is a lextensive congruence, then $\C[\Sigma^{-1}]$ is a lextensive category, $P$ is a lextensive functor, and $F_{\Sigma}$ is such if and only if $F$ is.
	\end{enumerate}
\end{prop}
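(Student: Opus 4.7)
The plan is to build $\C[\Sigma^{-1}]$ via a \emph{calculus of right fractions} in the sense of Gabriel--Zisman. Since $\Sigma$ contains the isomorphisms, satisfies two-out-of-three, and is stable under pullback, one can form the category whose objects are those of $\C$ and whose morphisms $A\to B$ are equivalence classes of spans $A\xleftarrow{s} A'\xrightarrow{f} B$ with $s\in\Sigma$, two such spans being equivalent if they admit a common refinement by a third. Composition of $[s,f]\colon A\to B$ with $[t,g]\colon B\to C$ is defined by pulling back $t$ along $f$, an operation which keeps us inside $\Sigma$ precisely because $\Sigma$ is pullback-stable. The functor $P\colon\C\to\C[\Sigma^{-1}]$ sends $f$ to $[1,f]$ and is easily checked to invert every $s\in\Sigma$: its inverse is $[s,1]$. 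The universal property then amounts to showing that any $F\colon\C\to\B$ inverting $\Sigma$ extends to $\C[\Sigma^{-1}]$ by $[s,f]\mapsto F(f)\circ F(s)^{-1}$, and that this assignment is well-defined on equivalence classes and functorial; uniqueness of the factorization is automatic since $P$ is identity on objects and $P(f)=P(g)\circ P(s)^{-1}$ whenever $f=gs$.

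For part (1), I would verify directly that $P$ preserves and creates finite limits. A terminal object for $\C$ is sent to a terminal object of $\C[\Sigma^{-1}]$ because any map into $P(1)$ is of the form $[s,f]=[s,!]$ which is forced. Pullbacks in $\C[\Sigma^{-1}]$ are computed from those in $\C$: given a cospan $[s,f]\colon A\to C\leftarrow B\colon[t,g]$, one may first replace both spans by a common denominator using pullback-stability of $\Sigma$, reducing to $[1,f']\colon A\to C\leftarrow B\colon[1,g']$, and then take the pullback in $\C$; the pullback-congruence hypothesis on $\Sigma$ ensures the resulting cone is independent (up to the equivalence) of the choice of denominator. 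From this one obtains that $F_\Sigma$ is lex iff $F$ is, since $P$ is lex and essentially surjective on objects.

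For part (2), under the extra regularity/locality hypothesis on $\Sigma$, I would show that $P$ preserves regular epimorphisms and that every regular epimorphism in $\C[\Sigma^{-1}]$ is of the form $[s,e]$ with $e$ a regular epimorphism in $\C$. Locality is exactly the condition needed so that regular epi/mono factorizations descend to the localization: if $[s,f]=[s,mq]$ with $m$ a mono and $q$ a regular epi in $\C$, the factorization is well-defined in $\C[\Sigma^{-1}]$ and is computed pointwise. Combined with (1), this shows $\C[\Sigma^{-1}]$ is regular, $P$ is regular, and $F_\Sigma$ is regular iff $F$ is.

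For part (3), the hypothesis that $\Sigma$ is closed under finite coproducts is precisely what is needed to transport coproducts through $P$. Given spans $[s_i,f_i]\colon A_i\to B$ for $i\in\{1,2\}$, I would form $s_1+s_2\in\Sigma$ and $f_1+f_2$, obtaining a span that exhibits $P(A_1)+P(A_2)\cong P(A_1+A_2)$. Disjointness and stability under pullback of coproducts in $\C[\Sigma^{-1}]$ follow from the corresponding properties in $\C$ together with pullback-stability of $\Sigma$. The main obstacle in the whole proposal is the careful verification in (3) that coproduct decompositions behave well with the equivalence relation defining the morphisms of $\C[\Sigma^{-1}]$, i.e.\ that the isomorphism $P(A_1+A_2)\cong P(A_1)+P(A_2)$ is canonical and that pullbacks of the coproduct coprojections remain coprojections after inverting $\Sigma$; this is precisely the content of the closure-under-coproducts hypothesis. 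With this in place, the proof of the ``$F_\Sigma$ is lextensive iff $F$ is'' clause follows the same pattern as in parts (1) and (2). Since the three statements are established in \cite{GZ67:libro,Ben89:articolo,aravantinos2017property}, in the actual write-up I would give only brief indications and refer the reader to these sources.
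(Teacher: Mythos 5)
Your proposal is correct and follows essentially the same route as the paper: both rest on the Gabriel--Zisman calculus of (right) fractions made possible by pullback-stability of $\Sigma$, with locality doing the work for regularity and closure under coproducts doing the work for lextensivity, and the paper's own proof simply delegates these verifications to \cite{GZ67:libro,Ben89:articolo,aravantinos2017property} (plus \cite[Proposition~2.2]{CLW93:articolo} for extensivity of $\C[\Sigma^{-1}]$), exactly as you say you would in the final write-up. The only point worth making explicit in part (3) is that extensivity of $\C$ itself (not just closure of $\Sigma$ under coproducts) is needed when checking uniqueness of the induced maps out of $P(A_1+A_2)$, since one must decompose an arbitrary span into $A_1+A_2$ along the coprojections.
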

\begin{proof}
	The explicit construction of $\C[\Sigma^{-1}]$ can be fond in \cite[Ch.I]{GZ67:libro}. Then $(1)$ and $(2)$ are in \cite{Ben89:articolo}. For $(3)$, we know by $(1)$ that $\C[\Sigma^{-1}]$ is lex and $P$ preserves finite limits. By \cite[Proposition~4.3]{aravantinos2017property} it follows that $\C[\Sigma^{-1}]$ has finite coproducts, that $P$ preserves them, and that $F_{\Sigma}$ preserves finite coproducts if and only if $F$ does. While, that $\C[\Sigma^{-1}]$ is actually extensive is an immediate consequence of \cite[Proposition~2.2]{CLW93:articolo}, the assumption that $\C$ is extensive, and that pullbacks in $\C[\Sigma^{-1}]$ are computed as in $\C$ (up to composition with arrows from $\Sigma$).
\end{proof}

Let us now move to the first result of this section, which aims to characterize the categories of models of small lextensive categories. For that we will need to introduce the notion of orthogonality with respect to finite cones.

\begin{Def}
	Let $\L$ be a locally finitely presentable category and $(c_i\colon C\to C_i)_{i\leq n}$ be a finite cone in $\L$. We say that $M\in\L$ is orthogonal with respect to the cone $(c_i)_i$ if the induced map
	$$ \sum_{i\leq n}\L(C_i,M)\stackrel{\cong}{\longrightarrow}\L(C,M) $$
	is an isomorphism. A {\em finite fc-orthogonality} class in $\L$ is a full subcategory of $\L$ spanned by the objects orthogonal with respect to a (small) set of finite cones with finitely presentable domains and codomains.
\end{Def}

When there is no restrictions on the number of legs of the cone, we recover the {\em (finite) cone orthogonality} classes of \cite[Definition~4.19]{AR94:libro}. 

In the theorem below, we give a characterization of finite fc-orthogonality classes. By a {\em finite-limit/finite-coproduct sketch} we mean a sketch $\S$ whose limit specifications involve only finite diagrams and whose colimit specifications involve finite discrete categories.

\begin{teo}\label{lext-char}
	Let $\M$ be a category; the following are equivalent:\begin{enumerate}\setlength\itemsep{0.25em}
		\item $\M$ is a finite fc-orthogonality class in some $[\C,\bo{Set}]$;
		\item $\M$ is the category of models of a finite-limit/finite-coproduct sketch $\S$;
		\item $\M\simeq\tx{Lext}(\E,\bo{Set})$ for some small lextensive category $\E$.
	\end{enumerate}
	In that case $\M$ is finitely accessible with connected limits (that is, locally finitely multipresentable) and closed under ultraproducts in the ambient category.
\end{teo}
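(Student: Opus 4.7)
My plan is to prove the three conditions equivalent via $(3) \Rightarrow (2) \Rightarrow (1) \Rightarrow (3)$, and then to derive the additional structural properties. The direction $(3) \Rightarrow (2)$ is essentially tautological: given a small lextensive category $\E$, take $\S$ to be the sketch on $\E$ whose limit specifications are all finite limit diagrams and whose colimit specifications are all finite coproduct diagrams in $\E$; then models of $\S$ in $\bo{Set}$ are precisely lextensive functors $\E \to \bo{Set}$. For $(2) \Rightarrow (1)$, the standard Yoneda translation does the job: each finite limit specification with apex $X$ and diagram $H\colon\D\to\C_0$ becomes orthogonality with respect to a single comparison morphism $yX \to \lim yH$ between finitely presentable presheaves, while each finite coproduct specification $(X_i \to X)_{i\leq n}$ becomes orthogonality with respect to the finite cone $(yX \to yX_i)_{i\leq n}$ in $[\C_0,\bo{Set}]$. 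All cone objects involved are finitely presentable since $\C_0$ is small.

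The substantial direction is $(1) \Rightarrow (3)$. Let $\M$ be defined by a small set $\S$ of finite fc-cones in $[\C,\bo{Set}]$ with finitely presentable apexes and legs. I would build a small lextensive $\E$ with $\M \simeq \tx{Lext}(\E,\bo{Set})$ in four steps:
\begin{enumerate}
\item Choose a small full subcategory $\D \subseteq [\C,\bo{Set}]$ containing all representables and all cone objects of $\S$, and closed under finite limits inside $[\C,\bo{Set}]$; such $\D$ exists and is small since the finitely presentable objects of $[\C,\bo{Set}]$ form a small category. In particular $\D$ is lex and dense in $[\C,\bo{Set}]$.
\item Form $\B := \tx{Fam}_f(\D)$, the free finite-coproduct completion. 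Since $\D$ is lex, it has finite multi-finite limits trivially, so by Proposition~\ref{free-lext} the category $\B$ is lextensive.
\item Let $\Sigma$ be the lextensive congruence on $\B$ generated by the canonical comparison maps $X_1 + \cdots + X_n \to X$ attached to each cone $(X \to X_i)_{i \leq n}$ in $\S$; a lextensive functor $\B \to \bo{Set}$ inverts $\Sigma$ iff it satisfies the orthogonality defining $\M$.
\item Set $\E := \B[\Sigma^{-1}]$; by Proposition~\ref{frac}(3), $\E$ is a small lextensive category.
\end{enumerate}
Combining the universal properties of the free finite-coproduct completion and of the localization, one checks that left Kan extension along the composite $\C \hookrightarrow \D \hookrightarrow \B \to \E$ induces the sought equivalence $\M \simeq \tx{Lext}(\E,\bo{Set})$.

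For the final properties, once $\M \simeq \tx{Lext}(\E,\bo{Set})$ is established, $\M$ is closed in $[\E,\bo{Set}]$ under filtered colimits and connected limits because both commute with finite limits and finite coproducts in $\bo{Set}$; hence $\M$ is finitely accessible with connected limits, i.e.\ locally finitely multipresentable (cf.\ Proposition~\ref{ilext-prop}). Closure under ultraproducts follows by a pointwise argument in the spirit of \L{}o\'s's theorem: filtered colimits of lextensive functors are lextensive, and for the product component one uses that an ultrafilter picks out exactly one summand of a finite disjoint union of sets, so finite-coproduct preservation survives ultraproducts.

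The main obstacle lies in step (1) of the construction for $(1) \Rightarrow (3)$: the cone objects of $\S$ are only assumed finitely presentable and need not be representable, so one must enlarge $\C$ carefully to a small lex $\D$ containing them without disturbing the orthogonality condition. Once $\D$ is in hand, the translation between orthogonality in $[\C,\bo{Set}]$ and preservation of coproducts in $\E$ hinges on a density argument for the composite $\C \hookrightarrow \B \to \E$, which must be checked to upgrade the universal property of the localization into the desired equivalence of full subcategories of $[\C,\bo{Set}]$.
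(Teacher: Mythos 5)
Your overall strategy matches the paper's: $(3)\Rightarrow(2)$ and $(2)\Rightarrow(1)$ are handled in essentially the same way, and for $(1)\Rightarrow(3)$ both you and the paper aim to present $\M$ as the lextensive functors inverting a lextensive congruence $\Sigma$ on a free lextensive completion, concluding with Proposition~\ref{frac}(3). However, your construction of the ambient lextensive category has the variance reversed, and as written the step fails. The representables of $[\C,\bo{Set}]$ are the functors $\C(C,-)$ and form a copy of $\C\op$, so to recover $[\C,\bo{Set}]$ as $\tx{Lex}(\A,\bo{Set})$ via Gabriel--Ulmer one must take $\A=\K_f\op$ (with $\K=[\C,\bo{Set}]$), the \emph{opposite} of a full subcategory closed under finite \emph{colimits}. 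Your $\D$ is instead closed under finite limits computed in $[\C,\bo{Set}]$ and is then used directly rather than op-ed: already for $\C=1$ this gives $\D\simeq\bo{Set}_f$ and $\tx{Lex}(\bo{Set}_f,\bo{Set})\simeq\tx{Ind}(\bo{Set}_f\op)$, the category of Boolean algebras, not $\bo{Set}=[\C,\bo{Set}]$. The same reversal breaks your step (3): a cone $(c_i\colon X\to X_i)_{i\le n}$ of $\S$ has legs pointing \emph{out of} $X$, so it induces no map $X_1+\cdots+X_n\to X$ in $\tx{Fam}_f(\D)$; such a map exists only after passing to the opposite category, where the cone becomes a cocone. (A further side effect of closing under finite limits rather than finite colimits is that $\D$ may leave the finitely presentable objects --- finite products of representables need not be finitely presentable --- severing the link with $\K_f$. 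The same slip appears, harmlessly, in your $(2)\Rightarrow(1)$, where the comparison map should be $\colim yH\to yX$ rather than $yX\to\lim yH$.)

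The repair is exactly what the paper does: take $\E$ to be the free lextensive completion $\tx{Fam}_f(\K_f\op)$ of $\K_f\op$ (or of the opposite of any small full subcategory of $\K_f$ containing the representables and the cone objects and closed under finite colimits), so that $[\C,\bo{Set}]\simeq\tx{Lex}(\K_f\op,\bo{Set})\simeq\tx{Lext}(\E,\bo{Set})$; each cone of $\S$ then corresponds under Yoneda to a cocone $(C_i\to C)_{i\le n}$ in $\E$ and hence to a single map $\sum_iC_i\to C$ whose inversion expresses the orthogonality condition. With that correction your localization step and the concluding density argument go through as in the paper, and your arguments for the final accessibility and ultraproduct claims agree with the paper's (which cites \cite[Theorem~4.29]{AR94:libro} for local finite multipresentability and the commutation of ultraproducts with finite coproducts in $\bo{Set}$).
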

\begin{proof}
	$(3)\Rightarrow (2)$. It is enough to consider the sketch $\S$ on $\E$ consisting of all the limiting cones for finite limits and colimiting cocones for finite coproducts.
	
	$(2)\Rightarrow (1)$. Let $\S=(\C,\mathbb L,\mathbb C)$ be a sketch for which $\M\simeq \tx{Mod}(\S)$. Then the inclusion $\M\hookrightarrow[\C,\bo{Set}]$ can be expressed as the intersection of $\tx{Mod}(\S,\mathbb L)$ and $\tx{Mod}(\S,\mathbb C)$. It is standard that the first can be seen as a finite orthogonality class (since it involves only finite-limit conditions). Regarding the second, given any cocone $(c_i\colon C_i\to C)_{i\leq n}$ in $\mathbb{C}$, a functor $F\colon\C\to \bo{Set}$ sends $(c_i)_i$ to a colimiting cocone if and only if it is orthogonal with respect to the cone $\C(c_i,-)_i$. In conclusion we can express $\M$ as a finite fc-orthogonality class in $[\C,\bo{Set}]$.
	
	$(1)\Rightarrow (3)$. We follow the argument of \cite[Proposition~9.6]{LT20:articolo}. Let $\M$ be a finite fc-orthogonality class in $\K:=[\C,\bo{Set}]$. Note that $\K\simeq \tx{Lex}(\K_f\op,\bo{Set})\simeq \tx{Lext}(\E,\bo{Set})$, where $\E$ is the free lextensive completion of $\K_f\op$ (which exists since $\K_f\op$ is lex). Thus $\M$ can be seen as a finite fc-orthogonality class in $\tx{Lext}(\E,\bo{Set})$ with respect to a class of finite cones $\H$ with representable domains and codomains. Every cone $c\in\H$ corresponds then to a finite cocone $(c_i\colon C_i\to C)_{i\leq n}$ in $\E$ and, since this has finite coproducts, to a map $\bar c\colon \textstyle\sum_i C_i\to C$. It follows easily that $F\colon\E\to\bo{Set}$ is orthogonal with respect to every cone in $\H$ if and only if it sends every $\bar c$ to an isomorphism. \\
	Let now $\Sigma$ be the class of all morphisms $h\in\E$ such that $F(h)$ is an isomorphism for any $F$ in $\M$. By the argument above, a lextensive functor $F\colon\E\to\bo{Set}$ lies in $\M$ if and only if it inverts every morphism in $\Sigma$; moreover, it is easy to see that $\Sigma$ is a lextensive congruence on $\C$. Thus, by Proposition~\ref{frac} above, $\E[\Sigma^{-1}]$ is a lextensive category and satisfies $\M\simeq \tx{Lext}(\E[\Sigma^{-1}],\bo{Set})$.
	
	For the last statement, $\M$ is finitely accessible with connected limits by \cite[Theorem~4.29]{AR94:libro}, while closure under ultraproducts in the ambient category follows from the fact that ultraproducts commute in $\bo{Set}$ with finite coproducts.
\end{proof}

\begin{obs}
	A category satisfying the equivalent conditions of Corollary~\ref{flat-ultraproducts} will also satisfy the three conditions in the theorem above by Proposition~\ref{free-lext}. However, the converse implication need not hold. For instance, the category $\bo{Fld}$ of fields can be expressed as the category of models of a finite-limit/coproduct sketch (being a locally finitely multipresentable category), but does not satisfy the conditions of Corollary~\ref{flat-ultraproducts}. This is because $\bo{Fld}_f$, despite having finite multi-colimits, does not have finite {\em multi-finite} colimits (and hence \ref{free-lext} and \ref{flat-ultraproducts} do not hold): the multi-initial object of $\bo{Fld}_f$ is given by the family consisting of $\mathbb Q$ and $\mathbb Z/p\mathbb Z$ for any prime $p$, which is not finite. 
\end{obs}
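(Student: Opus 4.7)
The statement to justify has two parts: (i) that $\bo{Fld}$ satisfies the three equivalent conditions of Theorem~\ref{lext-char}, and (ii) that $\bo{Fld}$ fails the equivalent conditions of Corollary~\ref{flat-ultraproducts}. The plan is to dispatch the first point by exhibiting a sketch and to reduce the second to the non-existence of finite fc-limits in $\bo{Fld}_f\op$ already at the level of the empty diagram.

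For the positive half, I would recall that $\bo{Fld}$ is locally finitely multipresentable (Diers), so in particular finitely accessible with connected limits; indeed, as already noted in the excerpt, $\bo{Fld}\simeq\tx{Flat}(\bo{Fld}_f\op,\bo{Set})$. The field axioms consist of the commutative-ring equations, all of which are finite-limit conditions, together with the disjunctive axiom stating that every element is either zero or invertible, which translates into a finite-coproduct specification decomposing the generic ring object as $\{0\}+R^\times$. This exhibits $\bo{Fld}$ as the category of models of a finite-limit/finite-coproduct sketch, so condition~(2) of Theorem~\ref{lext-char} is met.

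For the negative half, by Corollary~\ref{flat-ultraproducts} applied to the finitely accessible $\M=\bo{Fld}$, the conditions in question amount to $\bo{Fld}$ being closed under categorical ultraproducts in $[\bo{Fld}_f\op,\bo{Set}]$; by Proposition~\ref{free-pretopos} with $\C=\bo{Fld}_f\op$, this is in turn equivalent to $\bo{Fld}_f\op$ having finite fc-limits. I would then show that even the empty diagram has no finite fc-limit in $\bo{Fld}_f\op$: such a limit would be a finite family $(K_i)_{i\leq n}$ of finitely presentable fields such that every finitely presentable field $E$ receives at least one field homomorphism from some $K_i$. Choose a prime $p$ not occurring among the characteristics of any $K_i$ that is isomorphic to $\mathbb F_p$ --- possible by the infinitude of primes and finiteness of the family --- and take $E=\mathbb F_p$. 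Since field homomorphisms are injective and $\mathbb F_p$ has no proper subfields, any map $K_i\to\mathbb F_p$ would force $K_i\cong\mathbb F_p$, contradicting the choice of $p$. Hence no finite fc-family exists.

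The main obstacle is exactly this last rigidity argument: passing from the qualitative remark that the multi-initial family $\{\mathbb Q,\mathbb F_p:p\ \tx{prime}\}$ of $\bo{Fld}_f$ is infinite to the sharper claim that even the much weaker fc-covering condition cannot be met by a finite family. The point is that the uniqueness aspect of the multi-initial property is not needed: already the \emph{existence} of a map into each $\mathbb F_p$ pins the covering family down, via injectivity of field morphisms and simplicity of the prime fields, to contain $\mathbb F_p$ itself for every prime $p$. Once this is established, Proposition~\ref{free-pretopos} yields failure of ultraproduct-closure in $[\bo{Fld}_f\op,\bo{Set}]$, and Corollary~\ref{flat-ultraproducts} then confirms that $\bo{Fld}$ does not satisfy its equivalent conditions, completing the promised counterexample.
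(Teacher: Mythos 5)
Your argument is correct, and it diverges from the paper's in a useful way on both halves. For the positive half, the paper justifies that $\bo{Fld}$ is the category of models of a finite-limit/finite-coproduct sketch only by the parenthetical ``being a locally finitely multipresentable category'' --- which, by the paper's own subsequent remark (citing the counterexample in Ad\'amek--Rosick\'y), is not by itself a sufficient reason; your explicit sketch, adding to the commutative-ring operations the finite-limit subobject $R^\times=\{(x,y):xy=1\}$ and the coproduct specification $R\cong\{0\}+R^\times$, is the honest verification of condition (2) of Theorem~\ref{lext-char}. For the negative half, the paper argues that the multi-initial family of $\bo{Fld}_f$ is the infinite family $\{\mathbb Q\}\cup\{\mathbb F_p\}_p$, so $\bo{Fld}_f\op$ lacks finite multi-finite limits and Proposition~\ref{free-lext} fails; to get from there to the failure of Corollary~\ref{flat-ultraproducts} one still needs either that $\bo{Fld}$ is closed under connected limits (so the failure of the conjunction in \ref{free-lext}(2) localizes to the ultraproduct clause), or that a finite fc-family would force the multilimit to be finite. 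You bypass this by routing through Proposition~\ref{free-pretopos}, which reduces ultraproduct-closure of $\tx{Flat}(\bo{Fld}_f\op,\bo{Set})\simeq\bo{Fld}$ directly to the existence of finite fc-limits in $\bo{Fld}_f\op$, and you then kill the empty diagram by hand: injectivity of field homomorphisms and the absence of proper subfields of $\mathbb F_p$ force any finite fc-initial family of $\bo{Fld}_f$ to contain a copy of $\mathbb F_p$ for every prime $p$. The arithmetic core (infinitude of primes, rigidity of prime fields) is the same as the paper's, but your version is more self-contained, avoiding both the multilimit detour and the connected-limits side condition. One phrasing nit: ``a prime $p$ not occurring among the characteristics of any $K_i$ that is isomorphic to $\mathbb F_p$'' should simply read ``a prime $p$ such that no $K_i$ is isomorphic to $\mathbb F_p$.''
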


\begin{obs}
	As a consequence of the arguments in the remark above, the category $\C:=\bo{Fld}_f\op$ is such that there exists a lextensive category $\E$ together with an equivalence
	$$ \tx{Flat}(\C,\bo{Set})\simeq \tx{Lext}(\E,\bo{Set}), $$
	but $\C$ itself does not have a free lextensive completion.
\end{obs}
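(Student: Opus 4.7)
The statement has two parts: first, exhibiting the equivalence $\tx{Flat}(\C,\bo{Set})\simeq\tx{Lext}(\E,\bo{Set})$ for some lextensive $\E$; second, showing that $\C=\bo{Fld}_f\op$ does not admit a free lextensive completion. Both parts should follow cleanly by combining the previous remark with earlier results of the paper, so there is no real obstacle here beyond unwinding definitions.

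For the first part, the plan is to use that $\bo{Fld}$ is locally finitely multipresentable, so by Diers duality we have $\bo{Fld}\simeq \tx{Flat}(\bo{Fld}_f\op,\bo{Set})=\tx{Flat}(\C,\bo{Set})$. On the other hand, as observed in the preceding remark, $\bo{Fld}$ is the category of models of a finite-limit/finite-coproduct sketch (this is a standard way to present any locally finitely multipresentable category: encode the multi-initial-object axiom and the other multilimit conditions as finite coproduct specifications on top of the finite-limit sketch for its finitely presentable objects). Then Theorem~\ref{lext-char}, applied to this sketch, supplies a small lextensive category $\E$ with $\bo{Fld}\simeq\tx{Lext}(\E,\bo{Set})$. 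Chaining the two equivalences yields $\tx{Flat}(\C,\bo{Set})\simeq\tx{Lext}(\E,\bo{Set})$.

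For the second part, the plan is to apply Proposition~\ref{free-lext}: since $\C=\bo{Fld}_f\op$ is Cauchy complete (finitely presentable objects in a locally finitely multipresentable category form a Cauchy complete category, and opposites of Cauchy complete categories are Cauchy complete), the free lextensive completion of $\C$ exists if and only if $\C$ has all finite multi-finite limits. It therefore suffices to exhibit a finite diagram in $\C$ whose multilimit is not indexed by a finite family. I would take the empty diagram: a multi-terminal object in $\C=\bo{Fld}_f\op$ is precisely a multi-initial object in $\bo{Fld}_f$, and as recalled in the previous remark this multi-initial family is $\{\mathbb Q\}\cup\{\mathbb Z/p\mathbb Z : p \text{ prime}\}$, which is countably infinite. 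Hence $\C$ has no finite multi-terminal object, so in particular no finite multi-finite limits, so by Proposition~\ref{free-lext} the free lextensive completion of $\C$ does not exist.

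The only step requiring any care is verifying that $\bo{Fld}_f\op$ is Cauchy complete so that Proposition~\ref{free-lext} applies, and verifying that the multi-initial family of $\bo{Fld}_f$ really is the one stated (which in turn relies on the elementary algebraic fact that every field contains a unique prime subfield, either $\mathbb Q$ or $\mathbb Z/p\mathbb Z$, and that these prime fields admit no non-trivial endomorphisms and no maps between one another). Once these standard facts are in place, both halves of the statement are formal consequences of results already established in the paper.
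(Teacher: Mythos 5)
Your overall decomposition matches the paper's intended argument, and the second half of your proof is correct as written: $\bo{Fld}_f\op$ is Cauchy complete, its multi-terminal object is the multi-initial family $\{\mathbb Q\}\cup\{\mathbb Z/p\mathbb Z : p \text{ prime}\}$ of $\bo{Fld}_f$, this family is irredundantly infinite, and Proposition~\ref{free-lext} then rules out a free lextensive completion.

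The genuine gap is in your justification of the first half. It is \emph{not} true that every locally finitely multipresentable category is the category of models of a finite-limit/finite-coproduct sketch: the paper records a counterexample in the remark following Theorem~\ref{lext-char} (citing \cite[p.~51]{adamek1996algebraic}), and by that theorem such sketchability is strictly stronger than local finite multipresentability, being equivalent to $\M\simeq\tx{Lext}(\E,\bo{Set})$. Moreover, your proposed encoding --- turn ``the multi-initial-object axiom'' into a finite coproduct specification --- fails for $\bo{Fld}$ for precisely the reason you exploit in the second half: the multi-initial family is infinite, so the corresponding cocone specification would have infinitely many legs and the sketch would not be a finite-limit/\emph{finite}-coproduct sketch. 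Had your gloss been correct, it would have contradicted the point of the example. The correct justification is specific to fields: take the finite-limit sketch for commutative rings and add one two-legged cocone specification exhibiting the carrier $F$ as the coproduct of the image of the zero constant $1\to F$ and of the object of pairs $(x,y)$ with $xy=yx=1$ (a finite limit, mapping to $F$ by first projection). Models are exactly the nontrivial commutative rings in which every element is either zero or invertible, i.e.\ fields; Theorem~\ref{lext-char} then produces the lextensive $\E$ with $\bo{Fld}\simeq\tx{Flat}(\C,\bo{Set})\simeq\tx{Lext}(\E,\bo{Set})$, completing the argument.
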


\begin{obs}
	Not every finitely accessible category with connected limits can be expressed by a finite-limit/finite-coproduct sketch (or any of the other two equivalent conditions); a counterexample is given in \cite[p.~51]{adamek1996algebraic}. 
\end{obs}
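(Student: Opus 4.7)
The plan is to leverage the closure-under-ultraproducts conclusion of Theorem~\ref{lext-char}. That theorem states that any $\M$ satisfying its three equivalent conditions must be closed under ultraproducts in the ambient presheaf category. Contrapositively, to produce the desired counterexample it suffices to exhibit a locally finitely multipresentable category $\M$ whose canonical dense embedding $\M\hookrightarrow[\M_f\op,\bo{Set}]$ fails to preserve ultraproducts in the target.

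I would then appeal to the construction of Ad\'amek--Rosick\'y~\cite[p.~51]{adamek1996algebraic}, which provides precisely such an example. Two things must be checked about their $\M$: first, that it is finitely accessible with connected limits (equivalently, locally finitely multipresentable), so that it is a legitimate candidate against the statement; and second, that there exist an ultrafilter $\U$ on some set $X$ and a family $(M_x)_{x\in X}$ of finitely presentable objects of $\M$ whose categorical ultraproduct, computed pointwise in $[\M_f\op,\bo{Set}]$, does not lie in $\M$. Once both points are in place, the closing assertion of Theorem~\ref{lext-char} immediately forces $\M$ to fail conditions $(1)$--$(3)$.

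The main difficulty is the second verification. The expected strategy is to isolate a definable property of $\M$---a universally quantified condition of the sort encoded by the multilimits of $\M_f\op$---which is satisfied by every object of $\M$ but is broken by the specific ultraproduct, exploiting the fact that ultraproducts in $\bo{Set}$ commute with finite coproducts but not with the broader class of multi-finite limits governing membership in $\M$. A natural way to present this is through a finite cone in $\M_f\op$ on which every object of $\M$ is orthogonal (in the sense of Section~\ref{fc-orth-inj}) while the candidate ultraproduct is not. Once such a witness is exhibited, no further argument is needed: the contrapositive of the final clause of Theorem~\ref{lext-char} closes the remark.
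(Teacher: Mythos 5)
Your strategy has a genuine gap, and the paper itself contains a counterexample to the inference you rely on. The closure under ultraproducts asserted in the last clause of Theorem~\ref{lext-char} is closure in \emph{the ambient category of the given presentation}: if $\M$ is a finite fc-orthogonality class in some $[\C,\bo{Set}]$ (equivalently $\M\simeq\tx{Lext}(\E,\bo{Set})$), then $\M$ is closed under ultraproducts in \emph{that} $[\C,\bo{Set}]$ (resp.\ $[\E,\bo{Set}]$) --- not in the canonical presheaf category $[\M_f\op,\bo{Set}]$. These are genuinely different conditions: the remarks immediately following Theorem~\ref{lext-char} observe that $\bo{Fld}$ satisfies all three conditions of the theorem (there is a lextensive $\E$ with $\tx{Flat}(\bo{Fld}_f\op,\bo{Set})\simeq\tx{Lext}(\E,\bo{Set})$), and yet $\bo{Fld}$ is \emph{not} closed under categorical ultraproducts in $[\bo{Fld}_f\op,\bo{Set}]$, since $\bo{Fld}_f$ lacks finite multi-finite colimits and so Corollary~\ref{flat-ultraproducts} and Proposition~\ref{free-lext} fail for it. Hence exhibiting a family of finitely presentable objects of $\M$ whose pointwise ultraproduct in $[\M_f\op,\bo{Set}]$ escapes $\M$ does not contradict conditions $(1)$--$(3)$; your contrapositive does not close.

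A second, related problem: the witness you propose --- a finite cone with finitely presentable domain and codomains on which every object of $\M$ is orthogonal while the candidate ultraproduct is not --- cannot exist. Orthogonality with respect to such a cone is precisely the kind of condition that ultraproducts preserve (homming out of a finitely presentable object commutes with products and filtered colimits, and finite coproducts commute with ultraproducts in $\bo{Set}$); this stability is exactly what proves the final clause of Theorem~\ref{lext-char}. To rule out \emph{every} realization of $\M$ as $\tx{Lext}(\E,\bo{Set})$ one needs a presentation-independent obstruction, which is why the paper does not derive this remark from Theorem~\ref{lext-char} at all but simply defers to the ad hoc argument of Ad\'amek and Rosick\'y; the cited counterexample does not proceed via ultraproducts.
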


We now move from lextensive categories to pretopoi and try to prove a similar characterization theorem with respect to fc-injectivity conditions.

The following first appeared in \cite[Definition~4.1]{beke2005flatness} as a specialization of the cone injectivity classes studied in \cite{AR94:libro}.

\begin{Def}
	Let $\L$ be a locally finitely presentable category and $(c_i\colon C\to C_i)_{i\leq n}$ be a finite cone in $\L$. We say that $M\in\L$ is injective with respect to the cone $(c_i)_{i\leq n}$ if the induced map
	$$ \sum_{i\leq n}\L(C_i,M)\twoheadrightarrow\L(C,M) $$
	is a surjection. A {\em finite fc-injectivity} class in $\L$ is a full subcategory of $\L$ spanned by the objects injective with respect to a (small) set of finite cones with finitely presentable domains and codomains.
\end{Def}

\begin{obs}\label{orth-inj}
	Every finite fc-orthogonality class is also a finite fc-injectivity class by \cite[Proposition~4.23]{AR94:libro}. Moreover, each finite fc-injectivity class is closed under filtered colimits and ultraproducts. Indeed, the first property holds more generally for finite cone injectivity classes, while the latter can be obtained by an easy computation using that the cones involved are finite and that ultrafilters have the finite intersection property.
\end{obs}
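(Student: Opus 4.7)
The remark contains three separate assertions: (i) every finite fc-orthogonality class is a finite fc-injectivity class; (ii) finite fc-injectivity classes are closed under filtered colimits (indeed the result holds for arbitrary finite cone-injectivity classes); and (iii) they are closed under ultraproducts. I would address each in turn, and (iii) is where the substance lies.

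For (i), the strategy follows \cite[Proposition~4.23]{AR94:libro}, adapted to finite cones. Given a defining cone $\kappa=(c_i\colon C\to C_i)_{i\leq n}$, the orthogonality condition requesting that $\sum_{i\leq n}\L(C_i,M)\to\L(C,M)$ be bijective decomposes into surjectivity (equivalent to injectivity with respect to $\kappa$ itself) together with two uniqueness constraints: within-component uniqueness ($hc_i=h'c_i\Rightarrow h=h'$), captured by injectivity with respect to the one-leg cone given by the codiagonal $\nabla_i\colon C_i+_CC_i\to C_i$; and cross-component disjointness (no pair $(h,h')$ with $hc_i=h'c_j$ for $i\neq j$), captured by injectivity with respect to the empty cone on the pushout $C_i+_CC_j$. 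All auxiliary objects, being pushouts of finitely presentable objects, are finitely presentable, so the resulting class is genuinely of finite fc-injective type.

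For (ii), let $\M\subseteq\L$ be the finite fc-injectivity class defined by a set $\H$ of finite cones of finitely presentable objects, and let $M=\colim_j M_j$ be a filtered colimit in $\L$ with each $M_j\in\M$. Given a defining cone $(c_i\colon C\to C_i)_{i\leq n}\in\H$ and any $g\colon C\to M$, finite presentability of $C$ yields a factorization $g=\iota_j\circ g_j$ through some stage $M_j$; injectivity of $M_j$ then produces an index $i$ and a map $h\colon C_i\to M_j$ with $hc_i=g_j$, so that $\iota_j\circ h$ is the desired lift witnessing injectivity of $M$.

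For (iii), which is the main novelty and the principal obstacle, fix an ultrafilter $\U$ on a set $X$ and a family $(M_x)_{x\in X}$ in $\M$, so that the categorical ultraproduct can be written as $\prod_\U M_x\cong\colim_{S\in\U}\prod_{s\in S}M_s$. Given a defining cone $(c_i\colon C\to C_i)_{i\leq n}\in\H$ and $g\colon C\to\prod_\U M_x$, finite presentability of $C$ gives $g=\iota_S\circ g_S$ for some $S\in\U$, with components $g_s\colon C\to M_s$. Injectivity of each $M_s$ yields an index $i(s)\leq n$ and a lift $h_s\colon C_{i(s)}\to M_s$ of $g_s$. Forming the finite partition $S=\bigsqcup_{i\leq n}S_i$ with $S_i:=\{s\in S:i(s)=i\}$, the ultrafilter property applied to a finite disjoint union forces exactly one $S_{i_0}$ to belong to $\U$; the family $(h_s)_{s\in S_{i_0}}$ then assembles into $h\colon C_{i_0}\to\prod_{s\in S_{i_0}}M_s$ with $hc_{i_0}=g_{S_{i_0}}$, and composition with the coprojection to the ultraproduct supplies the required lift. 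The whole argument hinges on $n$ being finite: an infinite partition of $S$ need not contain any part belonging to $\U$, which is precisely why the restriction to finite fc-injectivity classes is essential and why the analogous closure under ultraproducts would fail for arbitrary cone-injectivity classes.
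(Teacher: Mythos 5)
Your proposal is correct and fills in exactly the justification the paper sketches: part (i) is the standard decomposition underlying the cited \cite[Proposition~4.23]{AR94:libro} (surjectivity as self-injectivity, plus codiagonal and empty-cone conditions on pushouts of finitely presentable objects), part (ii) is the usual finite-presentability argument for cone-injectivity classes, and part (iii) is precisely the ``easy computation'' the remark alludes to, using finiteness of the cone together with the finite intersection property of the ultrafilter to locate the unique part $S_{i_0}\in\U$ of the partition. No gaps; this is essentially the same route as the paper's.
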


Recall from \cite{KR18:articolo} that a full subcategory $\D$ of a locally finitely presentable category is a finite injectivity class if and only if there exists a regular category $\C$ such that $\D\simeq\tx{Reg}(\C,\bo{Set})$. We show below that a similar result holds for categories of models of pretopoi.

In point $(2)$ below we consider a generalization of the notion of limit/epi sketch of \cite[Definition~4.12]{AR94:libro}. We say that a sketch $\S=(\C,\mathbb L,\mathbb C)$ is a {\em finite-limit/fc-epi} sketch if the limit specifications only involve finite diagrams, and every colimit diagram is given by a finite family of spans as below
\begin{center}
	\begin{tikzpicture}[, baseline=(current  bounding  box.south), scale=2, on top/.style={preaction={draw=white,-,line width=#1}}, on top/.default=4pt]
		
		\node (a'0) at (0,0.7) {$C_1$};
		\node (0) at (0.9,0.7) {$\dots$};
		\node (a'''0) at (1.8,0.7) {$C_n$};
		\node (b0) at (1.6,0) {$C$};
		\node (c0) at (0.2,0) {$C$};
		
		\path[font=\scriptsize]

		(a'0) edge [->] node [left] {$e_1\ \ $} (b0)
		(a'0) edge [->] node [left] {$e_1$} (c0)

		(a'''0) edge [->] node [right] {$e_n$} (b0)
		(a'''0) edge [->, on top] node [above] {$\ \ \ \ \ \ \ \ \ \ \ \ \ \ \ \ \ \ \ e_n$} (c0);
	\end{tikzpicture}	
\end{center}
with cocone specifications given by the identity cospan on $C$. Thus, a functor $F\colon\C\to\bo{Set}$ is a model of $\S$ if and only if it sends the specified cones to limits, and the specified cocone $(e_i\colon C_i\to C)_{i\leq n}$ in $\mathbb C$ to a jointly epimorphic family.

Then the next theorem characterizes finite fc-injectivity classes. From the point of view of logic, considering that pretopoi axiomatize coherent theories, this can also be seen as consequence of the results of \cite{AR94:libro} (see Remark~\ref{conj-disj} below).

\begin{teo}
	Let $\M$ be a category; the following are equivalent:\begin{enumerate}\setlength\itemsep{0.25em}
		\item $\M$ is a finite fc-injectivity class in some $[\C,\bo{Set}]$;
		\item $\M$ is the category of models of a finite-limit/fc-epi sketch;
		\item $\M$ is the category of models of a finite-limit/(finite-coproduct,epi) sketch;
		\item $\M\simeq\tx{Mod}(\E,\bo{Set})$ for some small pretopos $\E$.
	\end{enumerate} 
	In that case $\M$ is accessible and closed under ultraproducts in the ambient presheaf category. 
\end{teo}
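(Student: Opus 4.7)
The plan is to prove the cycle $(4)\Rightarrow(3)\Rightarrow(2)\Rightarrow(1)\Rightarrow(4)$, following closely the strategy of Theorem~\ref{lext-char} but replacing its lextensive/fc-orthogonality ingredients by their pretopos/fc-injectivity analogues. The first three implications are straightforward sketch encodings; the substantial work is $(1)\Rightarrow(4)$.

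For $(4)\Rightarrow(3)$, given a small pretopos $\E$, take the sketch on $\E$ whose specifications are all finite limit cones, all finite coproduct cocones, and all regular epimorphisms in $\E$ (the latter as one-arrow cocones); the models in $\bo{Set}$ are exactly the pretopos functors $\E\to\bo{Set}$. For $(3)\Rightarrow(2)$, I would translate each finite coproduct specification $(c_i\colon C_i\to C)_{i\le n}$ into an fc-epi specification $(c_i)$ together with finite-limit specifications forcing each pullback $C_i\times_C C_j$ (for $i\ne j$) to be initial and specifying the initial object itself. A functor sends $(c_i)$ to a coproduct in $\bo{Set}$ iff it sends it to a jointly-surjective, pairwise-disjoint family, so this translation preserves models; each epi specification on a single morphism is already an fc-epi specification with one leg. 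For $(2)\Rightarrow(1)$, exactly as in Theorem~\ref{lext-char}, finite-limit specifications on $(\C,\mathbb{L},\mathbb{C})$ become finite orthogonality conditions in $[\C,\bo{Set}]$, hence finite fc-injectivity conditions by Remark~\ref{orth-inj}; each fc-epi specification $(e_i\colon C_i\to C)_{i\le n}$ becomes, by Yoneda, the fc-injectivity condition on the cone of representables $(\C(e_i,-))_{i\le n}$.

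For the main implication $(1)\Rightarrow(4)$, let $\M$ be a finite fc-injectivity class in $\K:=[\C,\bo{Set}]$. Following the proof of $(1)\Rightarrow(3)$ in Theorem~\ref{lext-char}, I would write $\K\simeq\tx{Lex}(\K_f\op,\bo{Set})\simeq\tx{Mod}(\E_0,\bo{Set})$, where $\E_0$ is the free pretopos completion of the lex category $\K_f\op$. Under this identification $\M$ is cut out by a small set $\H$ of finite fc-injectivity conditions with representable endpoints; by Yoneda each such condition corresponds to a finite cocone $(\gamma_i\colon A_i\to A)_{i\le n}$ in $\E_0$, equivalently to a single morphism $\bar\gamma\colon\sum_i A_i\to A$. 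A model $F\in\tx{Mod}(\E_0,\bo{Set})$ lies in $\M$ precisely when it sends each such $\bar\gamma$ to a regular epimorphism (surjection) in $\bo{Set}$. Let $\Sigma$ be the class of all morphisms of $\E_0$ sent to regular epis by every member of $\M$; the aim is then to build a small pretopos $\E$ together with a pretopos functor $P\colon\E_0\to\E$ which universally turns $\Sigma$ into regular epis, from which $\M\simeq\tx{Mod}(\E,\bo{Set})$ will follow.

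The hard part is this pretopos localization, an analogue of Proposition~\ref{frac}(3) for pretopoi. I would carry it out by imitating the regular and lextensive constructions of \emph{loc.\ cit.}, introducing a notion of \emph{pretopos congruence} (closed under pullback, coproducts, composition, two-out-of-three, and interaction with image-factorizations) and showing that $\E_0[\Sigma^{-1}]$ inherits a pretopos structure in which $P$ is a pretopos functor, and that pretopos functors out of $\E_0[\Sigma^{-1}]$ correspond to pretopos functors out of $\E_0$ sending $\Sigma$ to regular epis. An alternative route is to invoke the syntactic presentation of pretopoi by coherent theories: the fc-injectivity conditions correspond to coherent sequents of the form $\phi(x)\vdash\bigvee_i\exists y_i\,\psi_i(x,y_i)$, and adding these as axioms to the coherent theory of $\E_0$ yields a coherent theory whose syntactic pretopos is the required $\E$. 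Either approach delivers $\M\simeq\tx{Mod}(\E,\bo{Set})$. For the final clause of the theorem, accessibility of $\M$ follows from the standard fact that cone-injectivity classes in locally finitely presentable categories are accessible (cf.~\cite{AR94:libro}), while closure under ultraproducts in the ambient presheaf category was already noted in Remark~\ref{orth-inj}.
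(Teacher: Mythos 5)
Your implications $(4)\Rightarrow(3)\Rightarrow(2)\Rightarrow(1)$ and the closing accessibility/ultraproduct clause agree with the paper's argument. The problem is in $(1)\Rightarrow(4)$, where you correctly identify the crux --- a universal construction turning the class $\Sigma$ of morphisms $\bar\gamma\colon\sum_i A_i\to A$ into regular epimorphisms --- but do not actually supply it. Your proposed ``pretopos congruence'' route is not carried out, and as stated it conflates two different universal properties: the category of fractions $\E_0[\Sigma^{-1}]$ classifies functors that \emph{invert} $\Sigma$, not functors sending $\Sigma$ to regular epis, so writing the target as $\E_0[\Sigma^{-1}]$ and asserting that pretopos functors out of it correspond to pretopos functors sending $\Sigma$ to regular epis is precisely the nontrivial claim that would need a new theory. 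The paper avoids this entirely with one observation you are missing: factor each $\bar\gamma$ as a regular epimorphism followed by a monomorphism $m$. Since every model of the free pretopos completion $\E_0$ already preserves regular epimorphisms and finite limits, a model is injective with respect to the cone corresponding to $\bar\gamma$ if and only if it \emph{inverts} $m$. This converts the fc-injectivity conditions into genuine inversion conditions, so the class $\Sigma$ to be treated consists of maps to be inverted; it is then checked to be simultaneously a regular and a lextensive congruence, Proposition~\ref{frac} applies verbatim to produce a regular and lextensive $\E_0[\Sigma^{-1}]$ with $\M\simeq\tx{Mod}(\E_0[\Sigma^{-1}],\bo{Set})$, and one concludes by passing to the relative free pretopos completion of this regular-lextensive category via \cite[Theorem~7.7]{GL12:articolo}.

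Your alternative route via coherent theories (adding the sequents $\phi\vdash\bigvee_i\exists y_i\,\psi_i$ and taking the syntactic pretopos) is viable --- the paper itself notes in Remark~\ref{conj-disj} that the theorem can be read off from coherent logic --- but in your write-up it is invoked rather than proved, so it does not close the gap as presented. One further small point: in $(3)\Rightarrow(2)$ the paper encodes a coproduct cocone as a family that is jointly monomorphic and jointly epimorphic; your encoding via initial pullbacks $C_i\times_C C_j$ captures disjointness but you should also force each $c_i$ to be monomorphic (e.g.\ by the finite-limit condition $C_i\times_C C_i\cong C_i$) for the two encodings to agree.
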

\begin{proof}
	$(4)\Rightarrow(3)$. This is trivial since a functor $F\colon\E\to\bo{Set}$ is a model of $\E$ if and only if it preserves finite limits, finite coproducts, and regular epimorphisms. 
	
	$(3)\Rightarrow(2)$. It is enough to express the finite-coproduct specifications using finite limits and jointly epimorphic families. And this is easy since a functor sends a cocone  $(c_i\colon C_i\to C)_{i\leq n}$ to a coproduct cocone if and only if it sends them to a family that is both jointly monomorphic and epimorphic.
	
	$(2)\Rightarrow (1)$. The finite-limit specifications can be expressed as orthogonality conditions, which by Remark~\ref{orth-inj}, correspond to some finite fc-injectivity conditions. Finally, to say that $F$ sends $(c_i\colon C_i\to C)_{i\leq n}$ to a jointly epimorphism family is the same as saying that $F$ is injective with respect to the finite cone $\C(c_i,-)_{i\leq n}$.
	
	$(1)\Rightarrow (4)$. We argue as in the proof of Theorem~\ref{lext-char}. First we see $\M$ as a finite fc-injectivity class in $\tx{Mod}(\E,\bo{Set})$ with respect to a class of finite cones $\H$ with representable domains and codomains (here $\E$ is the free pretopos completion of $[\C,\bo{Set}]_f$). Every cone $c\in\H$ corresponds to a finite cocone $(c_i\colon C_i\to C)_{i\leq n}$ in $\E$ and, since this has finite coproducts, to a map $\bar c\colon \textstyle\sum_i C_i\to C$. Let $\bar c= e\circ m$ be the (regular epi,mono) factorization of $\bar c$; then a functor $F\in \tx{Mod}(\E,\bo{Set})$ is injective with respect to $c$ if and only if it sends $m$ to an isomorphism. \\
	Thus, if we consider the class $\Sigma$ of all morphisms $h\in\E$ such that $F(h)$ is an isomorphism for any $F$ in $\M$, by the argument above we obtain that $F\in \tx{Mod}(\E,\bo{Set})$ lies in $\M$ if and only if it inverts every morphism in $\Sigma$. Since every $F\in\M$ preserves coproducts and regular epimorphisms moreover, it is easy to see that $\Sigma$ is both a regular and lextensive congruence on $\C$. Thus, by Proposition~\ref{frac} above, $\E[\Sigma^{-1}]$ is a regular and lextensive category and we have an equivalence $\M\simeq \tx{Mod}(\E[\Sigma^{-1}],\bo{Set})$ (as for pretopoi, a model $\E[\Sigma^{-1}]\to\bo{Set}$ is a lex functor preserving finite coproducts and regular epimorphisms). To conclude it is then enough to consider the relative free pretopos completion of $\E[\Sigma^{-1}]$ as a regular-lextensive category; this is given by \cite[Theorem~7.7]{GL12:articolo}.
	
	For the last statement, $\M$ is accessible by \cite[Theorem~4.17]{AR94:libro}, while closure under ultraproducts in the ambient category follows from the fact that finite jointly epimorphic families are stable under ultraproducts in $\bo{Set}$.
\end{proof}

When $\M$ is finitely accessible this can be seen as a consequence of \cite[3.3  and~4.2]{beke2005flatness}. Every category $\M$ as in the theorem above is accessible; however, unlike in Theorem~\ref{lext-char}, the category need not be {\em finitely} accessible. The same problem with cardinality holds also for finite injectivity classes in the theory of definable categories \cite{Pre11:libro,KR18:articolo}.

\begin{obs}\label{conj-disj}
	From the point of view of model theory, the theorem above can be seen as a consequence of the results in \cite[Section~5.B]{AR94:libro}. Indeed, it is essentially saying that finite fc-injectivity classes in presheaf categories correspond to categories of models of coherent logic; meaning those whose axioms are of the form
	$$ \forall x (\phi(x)\Rightarrow\psi(x)) $$%
	where $\phi$ and $\psi$ are finite disjunctions of positive-primitive formulas. 
\end{obs}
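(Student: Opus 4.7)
The plan is to justify the remark by unfolding the characterization theorem through two standard correspondences from categorical logic. First, by the Makkai--Reyes theorem, every small pretopos $\E$ arises (up to equivalence) as the syntactic category $\mathbb T^{\tx{syn}}_\E$ of a coherent theory, and the equivalence $\tx{Mod}(\E,\bo{Set})\simeq \tx{Mod}(\mathbb T_\E,\bo{Set})$ identifies $\Phi_{\tx{pret}}$-exact functors with set-valued models of $\mathbb T_\E$. Thus condition $(4)$ of the preceding theorem already realizes $\M$ as the models of some coherent theory. What is left is to see that the finite fc-injectivity conditions of $(1)$ are the intrinsic/syntax-free counterpart of coherent axioms.

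To that end, I would interpret a finite cone $(c_i\colon C\to C_i)_{i\leq n}$ in $[\C,\bo{Set}]$, with all vertices finitely presentable, in the canonical many-sorted language associated with $\C$ (one sort per object, one unary function symbol per morphism). Under the standard identification of finitely presentable presheaves with positive-primitive formulas up to provable equivalence, the vertex $C$ corresponds to a pp-formula $\phi(\vec x)$ and each $C_i$ to a pp-formula $\psi_i(\vec x,\vec y_i)$, with $c_i$ corresponding to the projection erasing the variables $\vec y_i$. A functor $M$ is injective with respect to this cone exactly when every tuple satisfying $\phi$ in $M$ lifts through some $c_i$, which is the semantic content of the coherent sequent
$$\phi(\vec x)\vdash_{\vec x} \bigvee_{i\leq n}(\exists \vec y_i)\,\psi_i(\vec x,\vec y_i),$$
i.e.\ of the axiom $\forall \vec x\, (\phi(\vec x)\Rightarrow\bigvee_i (\exists \vec y_i)\psi_i(\vec x,\vec y_i))$ appearing in the remark. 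A small set of such cones thus translates bijectively to a coherent theory, and conversely every coherent theory is built from finite conjunctions (pp-formulas) and finite disjunctions, so its axioms unfold into finite fc-injectivity conditions.

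Finally, I would connect this with \cite[Section~5.B]{AR94:libro}, which establishes the dictionary between cone-injectivity classes and fragments of infinitary first-order logic, keeping careful track of how the size of the cones and the presentability rank of the domains restrict the quantifier and connective complexity of the resulting axioms. Specializing their general statement to \emph{finite} cones with \emph{finitely presentable} vertices singles out exactly the coherent fragment, so the chain $(1)\Leftrightarrow\dots\Leftrightarrow(4)$ of the theorem can be read as: fc-injectivity class $\leftrightarrow$ coherent axiomatization $\leftrightarrow$ pretopos of definable objects.

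The main obstacle is making the translation between finitely presentable presheaves and pp-formulas completely canonical in the absence of any algebraic structure on $\C$. For a free algebraic theory this is textbook, but for an arbitrary small $\C$ one must fix the many-sorted signature carefully, verify that finitely presentable objects of $[\C,\bo{Set}]$ are precisely the filtered-colimit compact ones given as finite colimits of representables (hence equivalent to pp-formulas modulo the axioms asserting functoriality and composition in $\C$), and check that the Yoneda-mediated interpretation commutes with the two coherent connectives (finite conjunction via pullback, finite disjunction via the image factorization of the cone). Once this bookkeeping is carried out, the remark reduces to unwinding the definitions on both sides.
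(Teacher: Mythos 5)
Your proposal is correct and follows essentially the same route as the paper: the remark is itself an informal observation, and your elaboration---identifying finitely presentable presheaves with positive-primitive formulas, reading injectivity with respect to a finite cone as validity of a coherent sequent $\forall \vec x\,(\phi\Rightarrow\bigvee_i\exists\vec y_i\,\psi_i)$, and invoking \cite[Section~5.B]{AR94:libro} together with the syntactic-category description of pretopoi---is exactly the intended justification. The only cosmetic point is that your antecedent is a single pp-formula rather than a finite disjunction of them, but disjunctions in the antecedent split into finitely many such sequents, so the two formulations agree.
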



\end{document}